\newcommand{\R}{{\mathbb R}}
\newcommand{\N}{{\mathbb N}}
\newcommand{\EE}{{\mathbb E}}
\newcommand{\PP}{{\mathbb P}}
\newcommand{\eul}{{\widehat X}}
\newcommand{\eultr}{{\widehat Z}}
\newcommand{\ind}{1}
\newcommand{\utrn}{\underline {t_r}_n}
\newcommand{\usn}{\underline {s}_n}
\newcommand{\utn}{\underline {t}_n}
\newcommand{\sgn}{\operatorname{sgn}}
\newcommand{\eps}{\varepsilon}
\newcommand{\F}{{\mathcal F}}
\newcommand{\Tm}{{\mathcal T}}
\theoremstyle{plain}
\newtheorem{theorem}{Theorem}
\newtheorem{prop}{Proposition}
\newtheorem{lemma}{Lemma}
\newtheorem{conj}{Conjecture}
\theoremstyle{definition}
\newtheorem{rem}{Remark}
\newtheorem{ex}{Example}
\begin{document}
\title[A method of order $3/4$  for SDEs with discontinuous drift coefficient]{A strong order $3/4$ method for SDEs with discontinuous drift coefficient}

\author[M\"uller-Gronbach]
{Thomas M\"uller-Gronbach}
\address{
Faculty of Computer Science and Mathematics\\
University of Passau\\
Innstrasse 33 \\
94032 Passau\\
Germany} \email{thomas.mueller-gronbach@uni-passau.de}

\author[Yaroslavtseva]
{Larisa Yaroslavtseva}
\address{
Faculty of Computer Science and Mathematics\\
University of Passau\\
Innstrasse 33 \\
94032 Passau\\
Germany} \email{larisa.yaroslavtseva@uni-passau.de}

\begin{abstract}
In this paper we study strong approximation of the solution of a scalar stochastic differential equation (SDE)  at the final time in the case when the drift coefficient  may have  discontinuities in space.
Recently it has been shown in~\cite{MGY18a}  that for scalar  SDEs with a piecewise Lipschitz drift coefficient and a Lipschitz diffusion coefficient that is non-zero at the discontinuity points  of the drift coefficient the classical Euler-Maruyama scheme achieves  an $L_p$-error rate of at least $1/2$ for all $p\in [1,\infty)$. Up to now this was the best $L_p$-error rate available in the literature for equations of that type.  
In the present paper we construct a method based on finitely many evaluations of the driving Brownian motion that  even achieves an $L_p$-error rate of at least $3/4$ for all $p\in [1,\infty)$ under  additional piecewise smoothness assumptions on the coefficients. To obtain this result we   prove in particular that a quasi-Milstein scheme achieves an $L_p$-error rate of at least $3/4$ in the case of coefficients that are both 
Lipschitz continuous and piecewise differentiable with Lipschitz continuous derivatives, which is of interest in itself. 
\end{abstract}

\maketitle

\section{Introduction}

Consider a scalar autonomous stochastic differential equation (SDE)
\begin{equation}\label{sde000}
\begin{aligned}
dX_t & = \mu(X_t) \, dt + \sigma(X_t) \, dW_t, \quad t\in [0,1],\\
X_0 & = x_0
\end{aligned}
\end{equation}
with deterministic initial value $x_0\in\R$,  drift coefficient $\mu\colon\R\to\R$,  diffusion coefficient $\sigma\colon \R\to\R$, $1$-dimensional driving Brownian motion $W$ and assume that~\eqref{sde000} has a unique strong solution $X$. In this paper we study $L_p$-approximation of $X_1$ based on finitely many evaluations of $W$ at  points in $[0,1]$ in the case when $\mu$ may have finitely many discontinuity points. 

Numerical   approximation  of SDEs with 
a 
drift coefficient that is discontinuous in space has gained a lot of interest in recent years, see~\cite{ g98b, gk96b} for results on convergence in probability and  almost sure convergence  of the Euler-Maruyama scheme  and~\cite{ GLN17, HalidiasKloeden2008,  LS16,  LS15b, LS18, NSS19, Tag16, Tag2017b, Tag2017a} for results on $L_p$-approximation. Up to now the most far going results 
on
$L_p$-approximation have been achieved under the following two 
assumptions
on the coefficients $\mu$ and $\sigma$.

\begin{itemize}
\item[(A1)] There exist $k\in\N$ 
and $\xi_0, \ldots, \xi_{k+1}\in [-\infty,\infty]$ with $-\infty=\xi_0<\xi_1<\ldots < \xi_k <\xi_{k+1}=\infty$ such that
 $\mu$ is Lipschitz continuous on the interval $(\xi_{i-1}, \xi_i)$ for all $i\in\{1, \ldots, k+1\}$,
\item[(A2)] $\sigma$ is Lipschitz continuous on $\R$ and $\sigma(\xi_i) \neq 0$ for all $i\in\{1,\ldots,k\}$.
\end{itemize}
Note that under the assumptions
(A1) and (A2) the equation~\eqref{sde000} has a unique strong solution, see ~\cite[Theorem 2.2]{LS16}. 
In~\cite{LS16, LS15b} a numerical 
method
has been  
constructed which is based on a suitable transformation of the solution $X$ and which achieves,  under the assumptions (A1) and (A2), an $L_2$-error rate of at least $1/2$ in terms of the number of evaluations of $W$.
In~\cite{LS18} it
has been
shown that the Euler-Maruyama
 scheme achieves an $L_2$-error rate of  at least $1/4-$
in terms of the number of evaluations of $W$  
if (A1) and (A2) are satisfied and, additionally, the coefficients $\mu$ and $\sigma$ are bounded. 
 In~\cite{NSS19} an adaptive Euler-Maruyama scheme
has been
constructed, which achieves, under the assumptions (A1) and (A2), 
an $L_2$-error rate of at least $1/2-$ in terms of the average number of evaluations of $W$. Finally, in~\cite{MGY18a} it
has been 
shown that, 
under the assumptions (A1) and (A2), the Euler-Maruyama 
scheme 
 in fact
achieves for all $p\in [1,\infty)$ an $L_p$-error rate of at least $1/2$  
in terms of the number of evaluations of $W$
as in the case of SDEs with 
globally Lipschitz continuous coefficients.

It is well known that if the coefficients $\mu$ and $\sigma$ are differentiable and have  bounded and Lipschitz continuous derivatives, then the Milstein scheme achieves  
      for all $p\in[1, \infty)$ an $L_p$-error rate of at least $1$ 
in terms of the number of evaluations of $W$,
see e.g.~\cite{HMGR01}. 
 It is therefore natural to ask whether an $L_p$-error rate better than $1/2$ 
can  be achieved by a method based on finitely many evaluations of $W$ also in the case of coefficients $\mu$ and $\sigma$ that satisfy (A1) and (A2) and have additional
 piecewise smoothness properties. To the best of our knowledge the answer to this question
was not known  in the literature up to now. In the present paper we answer this question in the positive. More precisely, we show that if the coefficients $\mu$ and $\sigma$ satisfy (A1) and (A2) and, additionally, the assumption
 \begin{itemize}
\item[(A3)] $\mu$ and $\sigma$ are differentiable on the interval  $(\xi_{i-1}, \xi_i)$ with Lipschitz continuous derivatives  for all $i\in\{1, \ldots, k+1\}$
\end{itemize}
then  an $L_p$-error rate of at least $3/4$    for all $p\in[1, \infty)$
can  be achieved by a method based on evaluations of $W$ at a uniform grid.
More formally, we have the following result, which is an immediate consequence of Theorem~\ref{Thm2} in Section~\ref{threefour}.

\begin{theorem}\label{introthm1}
Assume that $\mu$ and $\sigma$ satisfy (A1) to (A3). Then there exists a sequence of measurable functions $\varphi_n\colon \R^n\to \R$, $n\in\N$, such that for all $p\in [1,\infty)$ there exists $c\in (0,\infty)$ such that for all $n\in\N$,
\begin{equation}\label{introe1}
\EE\bigl[|X_1-\varphi_n(W_{1/n},W_{2/n},\dots,W_1)|^p\bigr]^{1/p}\le c/n^{3/4}.
\end{equation}
\end{theorem}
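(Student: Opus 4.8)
The plan is to remove the discontinuities of $\mu$ by a space transformation, so that the transformed equation has globally Lipschitz and piecewise smooth coefficients, to run a quasi-Milstein scheme on the transformed equation, and to transfer the resulting error bound back. Following the Zvonkin-type device of Leobacher and Sz\"olgyenyi~\cite{LS16, LS15b}, I would first construct a strictly increasing bijection $G\colon\R\to\R$ that equals the identity outside small neighbourhoods of $\xi_1,\dots,\xi_k$, is Lipschitz together with its inverse, and inside the neighbourhood of $\xi_i$ carries a ``kink'' (a jump of $G''$) chosen so that the It\^o correction $\tfrac12 G''\sigma^2$ exactly cancels the jump of $G'\mu$ at $\xi_i$; since $\sigma(\xi_i)\neq0$ by (A2), this is possible once the neighbourhoods are small enough. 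Putting $Z_t:=G(X_t)$, It\^o's formula shows that $Z$ solves
\begin{equation*}
dZ_t=\tilde\mu(Z_t)\,dt+\tilde\sigma(Z_t)\,dW_t,\qquad Z_0=G(x_0),
\end{equation*}
with $\tilde\mu=\bigl(G'\mu+\tfrac12G''\sigma^2\bigr)\circ G^{-1}$ and $\tilde\sigma=\bigl(G'\sigma\bigr)\circ G^{-1}$.

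Next I would verify that $\tilde\mu$ and $\tilde\sigma$ are globally Lipschitz continuous and, in addition, differentiable with Lipschitz continuous derivatives off a finite set (containing $G(\xi_1),\dots,G(\xi_k)$). Global Lipschitz continuity of $\tilde\mu$ is precisely what the kink of $G$ is built to achieve and is contained in~\cite{LS16}; the extra piecewise $C^1$ / Lipschitz-derivative property follows from (A3) provided $G$ is chosen smooth enough on its pieces, so that $G'$, $G''$, the products $G'\mu$, $G''\sigma^2$, $G'\sigma$ and the inverse $G^{-1}$ are all piecewise $C^1$ with Lipschitz derivatives, after which it is a product- and chain-rule bookkeeping of the finitely many exceptional points of $\mu$, $\sigma$ and $G$. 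Thus $(\tilde\mu,\tilde\sigma)$ belongs to the class of coefficients that are simultaneously globally Lipschitz and piecewise differentiable with Lipschitz continuous derivatives.

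The central step is to show that for coefficients of this type the quasi-Milstein scheme
\begin{equation*}
\eultr_{(\ell+1)/n}=\eultr_{\ell/n}+\tfrac1n\tilde\mu(\eultr_{\ell/n})+\tilde\sigma(\eultr_{\ell/n})(W_{(\ell+1)/n}-W_{\ell/n})+\tfrac12(\tilde\sigma\tilde\sigma')(\eultr_{\ell/n})\bigl((W_{(\ell+1)/n}-W_{\ell/n})^2-\tfrac1n\bigr),
\end{equation*}
where $\tilde\sigma'$ denotes the piecewise derivative (defined arbitrarily at the exceptional points), satisfies $\EE[|Z_1-\eultr_1|^p]^{1/p}\le c\,n^{-3/4}$ for every $p\in[1,\infty)$. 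On each subinterval $[\ell/n,(\ell+1)/n]$ I would distinguish the ``regular'' steps, where the continuous solution and the scheme stay at distance $\gtrsim n^{-1/2}$ from every exceptional point, so that the classical Milstein local error bound of order $n^{-3/2}$ in $L_p$ applies, from the ``critical'' steps near an exceptional point, where the one-sided value of $\tilde\sigma'$ used in the correction need not match the effective local slope and only an Euler-type local bound of order $n^{-1}$ is available. Occupation-time estimates for $Z$ near the exceptional points --- exploiting $\tilde\sigma(G(\xi_i))\neq0$, which follows from $\sigma(\xi_i)\neq0$ --- show that only $\mathcal O(n^{1/2})$ of the $n$ steps are critical; since the associated local errors aggregate like a sum of weakly dependent martingale-type increments, a Burkholder--Davis--Gundy estimate together with a discrete Gr\"onwall argument bounds their contribution by $\mathcal O\bigl((n^{1/2}\cdot n^{-2})^{1/2}\bigr)=\mathcal O(n^{-3/4})$, while the regular steps only contribute $\mathcal O(n^{-1})$. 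Finally, writing $\eultr_1=\psi_n(W_{1/n},\dots,W_1)$ for a measurable $\psi_n$ and setting $\varphi_n:=G^{-1}\circ\psi_n$, the identity $X_1=G^{-1}(Z_1)$ and the Lipschitz continuity of $G^{-1}$ give $\EE[|X_1-\varphi_n(W_{1/n},\dots,W_1)|^p]^{1/p}\le\operatorname{Lip}(G^{-1})\,\EE[|Z_1-\eultr_1|^p]^{1/p}\le c\,n^{-3/4}$, which is~\eqref{introe1}.

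The main obstacle is the critical-step analysis in the quasi-Milstein estimate: one has to quantify how often, and how badly, the scheme uses the ``wrong slope'' of $\tilde\sigma$ (and $\tilde\mu$) near the finitely many kinks, and to show that these rare but comparatively large local errors, once propagated through the scheme, still cost only a factor $n^{-3/4}$ in every $L_p$-norm --- which is where the occupation-time (or local-time) bounds for $Z$ must be combined carefully with the martingale structure of the error process. A subordinate technical point is to pin down exactly how smooth $G$ must be taken on its pieces so that the transformed pair $(\tilde\mu,\tilde\sigma)$ genuinely inherits the piecewise $C^1$ / Lipschitz-derivative structure required by the quasi-Milstein estimate.
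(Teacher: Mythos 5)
Your plan is exactly the route the paper takes: a bi-Lipschitz transformation $G$ (identity outside small neighbourhoods of the $\xi_i$, with a jump of $G''$ at $\xi_i$ sized $-(\mu(\xi_i-)-\mu(\xi_i+))/\sigma^2(\xi_i)$ up to sign, so that $G'\mu+\tfrac12 G''\sigma^2$ becomes continuous), then the quasi-Milstein scheme for the transformed SDE with coefficients satisfying (B1)--(B3), then transfer back through the Lipschitz map $G^{-1}$; this is Lemmas~\ref{lemx1}--\ref{Ito} and Theorems~\ref{Thm1},~\ref{Thm2}.

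Two points in your sketch are, however, where the actual work lies and where your argument as stated is not yet a proof. First, the occupation-time estimate must be established for the \emph{scheme}, not for the solution $Z$: what has to be controlled is how often the quasi-Milstein path itself sits near a kink at the moment it picks the ``wrong slope'', and one cannot borrow this from $Z$ without already knowing a strong error bound (circularity). The paper does this directly for $\widehat Z_n$ via the local time of the (semimartingale) scheme (Lemma~\ref{occup}), and then converts it into the sign-change moment bound of Proposition~\ref{prop1}. Moreover, your counting heuristic ``$\mathcal O(n^{1/2})$ critical steps, each of local size $n^{-1}$, aggregated by BDG to $\mathcal O(n^{-3/4})$'' is essentially an $L_2$-argument; to get every $p\in[1,\infty)$ one needs $p$-th moments of the occupation-type functional $\int_0^1|\widehat Z_{n,t}-\widehat Z_{n,\underline t_n}|^q\ind_{A_{n,t}}\,dt$, which the paper obtains by an induction over $p$ using the Markov property of the scheme at grid points and the conditional estimates of Lemmas~\ref{central} and~\ref{key}; nothing in your sketch replaces that induction. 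Second, the ``subordinate technical point'' you defer is not automatic: with the bump $(1-x^2)^3\ind_{[-1,1]}$ used in the earlier transformation literature, $G''$ fails to be differentiable at $\xi_i\pm\nu$ and the transformed drift then violates the piecewise Lipschitz-derivative hypothesis your quasi-Milstein estimate needs; the paper fixes this by taking $\phi(x)=(1-x^2)^4\ind_{[-1,1]}$ in~\eqref{phi}, so you must make an analogous explicit choice rather than leave the smoothness of $G$ unspecified.
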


We
illustrate the statement of Theorem~\ref{introthm1}  by 
the SDE 
\begin{equation}\label{exx1}
\begin{aligned}
dX_t & = (1+X_t)\, 1_{[0,\infty)}(X_t)\, dt + dW_t,\quad t\in [0,1],\\
X_0 & = x_0.
\end{aligned}
\end{equation}
Clearly,  the assumptions (A1) to (A3) are satisfied 
with $k=1$ and $\xi_1=0$. For the SDE~\eqref{exx1}, the strongest result on $L_p$-approximation of $X_1$ which was  available in the literature so far is provided by~\cite[Theorem 1]{MGY18a}, which states that the  Euler-Maruyama scheme achieves an $L_p$-error rate of at least $1/2$  for all $p\in [1,\infty)$. However, by Theorem~\ref{introthm1} we see that  for this SDE  in fact an $L_p$-error rate of at least $3/4$ for all $p\in [1,\infty)$ can be achieved  by a method based on finitely many evaluations of $W$.

We believe that  the upper error
bound \eqref{introe1}
in Theorem~\ref{introthm1} can not be improved  in general by a method  based on $n$  evaluations of  $W$.  See also Conjecture \ref{Conj2} in Section \ref{disc}. 
We  furthermore believe that an $L_p$-error rate better than $3/4$  can not be achieved in general even then when  the coefficients $\mu$ and $\sigma$ satisfy (A1) and (A2) and are, additionally, infinitely often differentiable on the interval  $(\xi_{i-1}, \xi_i)$ with Lipschitz continuous derivatives of all orders  for all $i\in\{1, \ldots, k+1\}$.   A study of these conjectures
will be the subject of future work.

Similarly
 to the approach taken in~\cite{LS16,LS15b}, the proof of Theorem~\ref{introthm1} is based on applying a suitable bi-Lipschitz mapping $G\colon\R\to\R$ to the solution $X$ of~\eqref{sde000}.
Under the assumptions (A1) to (A3) it is possible to construct $G$ in such a way that the transformed solution $G\circ X = (G(X_t))_{t\in [0,1]}$ is the unique strong solution of a new SDE with coefficients that are both globally Lipschitz continuous and piecewise differentiable with Lipschitz continuous derivatives. For the latter SDE we introduce a quasi-Milstein scheme  $(\widehat X_{n,\ell/n})_{\ell =0,\dots,n}$ and prove that $ \widehat X_{n,1}$ achieves  
for all $p\in [1,\infty)$
an $L_p$-error rate of at least $3/4$
in terms of the number of evaluations of $W$ 
for approximating $G(X_1)$. Using the Lipschitz continuity of $G^{-1}$ yields the statement of Theorem~\ref{introthm1} with 
$\varphi_n(W_{1/n},W_{2/n},\dots,W_1)= G^{-1} (\widehat X_{n,1})$.  

To be more precise we introduce the following three assumptions on the coefficients $\mu$ and $\sigma$ of the SDE~\eqref{sde000}, which are stronger than the assumptions (A1) to (A3).

\begin{itemize}
\item[(B1)] $\mu$ and $\sigma$ are Lipschitz continuous on $\R$,
\item[(B2)] there exist $k_\mu,k_\sigma\in\N_0$ and $\xi_0, \ldots, \xi_{k_\mu+1}\in [-\infty,\infty]$  with $-\infty=\xi_0<\xi_1<\ldots < \xi_{k_\mu} <\xi_{k_\mu+1}=\infty$ as well as  $\eta_0, \ldots, \eta_{k_\sigma+1}\in [-\infty,\infty]$  with $-\infty=\eta_0<\eta_1<\ldots < \eta_{k_\sigma} <\eta_{k_\sigma+1}=\infty$ such that
 $\mu$ is differentiable on the interval $(\xi_{i-1}, \xi_i)$ with Lipschitz continuous derivative for all $i\in\{1, \ldots, k_\mu+1\}$ and $\sigma$ is differentiable on the interval $(\eta_{i-1}, \eta_i)$ with Lipschitz continuous derivative for all $i\in\{1, \ldots, k_\sigma+1\}$,
\item[(B3)] $\sigma(\xi_i) \neq 0$ for all $i\in\{1,\ldots,k_\mu\}$ and $\sigma(\eta_i) \neq 0$ for all $i\in\{1,\ldots,k_\sigma\}$.
\end{itemize}

Furthermore, for all $n\in\N$ we define the quasi-Milstein scheme  $(\widehat X_{n,\ell/n})_{\ell =0,\dots,n}$ with step-size $1/n$ associated to the SDE~\eqref{sde000} by $\widehat X_{n,0}= x_0$ and 
\begin{equation*}
\begin{aligned}
\widehat X_{n,(\ell+1)/n}& =\widehat X_{n,\ell/n}+\mu(\widehat X_{n,\ell/n})\cdot 1/n + \sigma(\widehat X_{n,\ell/n})\cdot (W_{(\ell+1)/n}-W_{\ell/n})\\
& \qquad\qquad + \frac{1}{2} \sigma\delta_\sigma(\widehat X_{n,\ell/n})\cdot ((W_{(\ell+1)/n}-W_{\ell/n})^2 - 1/n)
\end{aligned}
\end{equation*}
for $\ell = 0,\dots,n-1$, where $\delta_\sigma(x)=\sigma'(x)$ if $\sigma$ is differentiable at $x$ and $\delta_\sigma(x)=0$ otherwise.  

We then have the following result, which is an immediate consequence of Theorem~\ref{Thm1} in Section~\ref{QM}.

\begin{theorem}\label{introthm2}
Assume that $\mu$ and $\sigma$ satisfy (B1) to (B3) and let $p\in[1,\infty)$. Then there exists $c\in (0,\infty)$ such that for all $n\in\N$,
\begin{equation}\label{jjj}
\EE\bigl[|X_1-\widehat X_{n,1}|^p\bigr]^{1/p}\le c/n^{3/4}.
\end{equation}
If, additionally, $k_\sigma=0$  then there exists $c\in (0,\infty)$ such that for all $n\in\N$,
\begin{equation}\label{ppp}
\EE\bigl[|X_1-\widehat X_{n,1}|^p\bigr]^{1/p}\le c/n.
\end{equation}
\end{theorem}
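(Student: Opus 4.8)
The plan is to carry out the analysis for the time-continuous interpolation $(\widehat X_{n,t})_{t\in[0,1]}$ of the scheme, obtained on the step $[\ell/n,(\ell+1)/n]$ by replacing $1/n$ and $W_{(\ell+1)/n}-W_{\ell/n}$ in the one-step formula by $t-\ell/n$ and $W_t-W_{\ell/n}$. Writing $\usn=\lfloor ns\rfloor/n$, this interpolation satisfies
\[
\widehat X_{n,t}=x_0+\int_0^t\mu(\widehat X_{n,\usn})\,ds+\int_0^t\sigma(\widehat X_{n,\usn})\,dW_s+\int_0^t\sigma\delta_\sigma(\widehat X_{n,\usn})\,(W_s-W_{\usn})\,dW_s
\]
and coincides with the scheme on the grid. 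First I would record, under (B1), the routine uniform-in-$n$ estimates $\EE[\sup_{t\le1}|X_t|^q]+\EE[\sup_{t\le1}|\widehat X_{n,t}|^q]\le c_q$ for all $q\in[1,\infty)$, the one-step estimate $\EE[|\widehat X_{n,s}-\widehat X_{n,\usn}|^q]\le c_q\,n^{-q/2}$, and the H\"older estimate $\EE[|X_t-X_s|^q]\le c_q\,|t-s|^{q/2}$. Setting $e_{n,t}=X_t-\widehat X_{n,t}$ and adding and subtracting $\mu(\widehat X_{n,s})$ and $\sigma(\widehat X_{n,s})$ inside the integrals gives
\[
e_{n,t}=\int_0^t\bigl(\mu(X_s)-\mu(\widehat X_{n,s})\bigr)\,ds+\int_0^t\bigl(\sigma(X_s)-\sigma(\widehat X_{n,s})\bigr)\,dW_s+A_{n,t}+B_{n,t},
\]
where $A_{n,t}=\int_0^t(\mu(\widehat X_{n,s})-\mu(\widehat X_{n,\usn}))\,ds$ and $B_{n,t}=\int_0^t(\sigma(\widehat X_{n,s})-\sigma(\widehat X_{n,\usn})-\sigma\delta_\sigma(\widehat X_{n,\usn})(W_s-W_{\usn}))\,dW_s$. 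Since $\mu$ and $\sigma$ are globally Lipschitz by (B1), the first two terms are Lipschitz in $e_{n,\cdot}$, so the Burkholder--Davis--Gundy inequality followed by Gronwall's lemma yields $\|\sup_{t\le1}|e_{n,t}|\|_p\le c\bigl(\sup_{t\le1}\|A_{n,t}\|_p+\sup_{t\le1}\|B_{n,t}\|_p\bigr)$ for $p\ge2$, the case $p\in[1,2)$ reducing to $p=2$ by Jensen. It then remains to establish $\sup_{t\le1}\|A_{n,t}\|_p\le c\,n^{-1}$ and $\sup_{t\le1}\|B_{n,t}\|_p\le c\,n^{-3/4}$, the latter improving to $c\,n^{-1}$ when $k_\sigma=0$.

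The step I expect to be the main obstacle is a family of occupation-time estimates at the finitely many singular points, i.e.\ at $\zeta$ ranging over $\{\xi_1,\dots,\xi_{k_\mu}\}\cup\{\eta_1,\dots,\eta_{k_\sigma}\}$, the only points at which $\mu$ or $\sigma$ can fail to be $C^1$. One needs, for every such $\zeta$ and every $q\in[1,\infty)$, that the number $N_{n,\zeta}$ of indices $\ell\in\{0,\dots,n-1\}$ with $|\widehat X_{n,\ell/n}-\zeta|\le c\,n^{-1/2}$ satisfies $\|N_{n,\zeta}\|_q\le c\,n^{1/2}$ uniformly in $n$ (equivalently, $\bigl\|\int_0^1 1_{\{|\widehat X_{n,\usn}-\zeta|\le\eps\}}\,ds\bigr\|_q\le c(\eps+n^{-1/2})$). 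The mechanism is that, by (B3), $\sigma(\zeta)\neq0$, so near $\zeta$ the one-step increments of the scheme are genuinely non-degenerate and $\widehat X_{n,\ell/n}$ has, after the first few steps, a Lebesgue density bounded on a neighbourhood of $\zeta$ uniformly in $n$ and $\ell$; obtaining the high moments in addition requires a quantitative decorrelation between grid points $\ell/n$ and $m/n$ with $|\ell-m|$ large. This is exactly where assumption (B3) is indispensable; note also that on each of these steps the coefficients $\mu,\sigma$ and $\delta_\sigma$ are evaluated within $c\,n^{-1/2}$ of the fixed point $\zeta$, hence are bounded there by a deterministic constant.

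Granting these estimates I would bound $A_{n,t}$ by splitting each step according to whether $\widehat X_{n,\usn}$ lies within $c\,n^{-1/2}$ of some $\xi_i$. Away from all $\xi_i$, $\mu$ is $C^1$ with Lipschitz derivative on a neighbourhood of $\widehat X_{n,\usn}$ containing also $\widehat X_{n,s}$ (after a routine truncation of the Gaussian tails of the increments), so a first-order Taylor expansion together with the explicit one-step increment shows that this part of $A_{n,t}$ equals $\int_0^t\mu'(\widehat X_{n,\usn})\sigma(\widehat X_{n,\usn})(W_s-W_{\usn})\,ds$, with the coefficient read as $0$ near the $\xi_i$, up to a remainder that is $O(n^{-1})$ in $L_p$ by the one-step moment estimates. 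The displayed main term is, step by step, a conditionally centred increment of $L_q$-size $O(n^{-3/2})$, hence $O(n^{-1})$ in $L_p$ by the maximal Burkholder--Davis--Gundy inequality. On the remaining steps one uses $|\mu(\widehat X_{n,s})-\mu(\widehat X_{n,\usn})|\le c\,|\widehat X_{n,s}-\widehat X_{n,\usn}|$, integrates over those steps and invokes the counting bound (using that the coefficients are bounded there), obtaining a contribution $\le c\,n^{-3/2}\|N_{n,\xi_i}\|_p\le c\,n^{-1}$. Hence $\sup_{t\le1}\|A_{n,t}\|_p\le c\,n^{-1}$: the kinks of $\mu$ cost nothing below level $n^{-1}$.

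Finally, $B_{n,t}$ is a martingale in $t$, so the Burkholder--Davis--Gundy inequality gives $\|\sup_{t\le1}|B_{n,t}|\|_p^2\le c\bigl\|\int_0^1 g_{n,s}^2\,ds\bigr\|_{p/2}$ with $g_{n,s}=\sigma(\widehat X_{n,s})-\sigma(\widehat X_{n,\usn})-\sigma\delta_\sigma(\widehat X_{n,\usn})(W_s-W_{\usn})$. On steps with $\widehat X_{n,\usn}$ at distance at least $c\,n^{-1/2}$ from every $\eta_j$ one has $\delta_\sigma(\widehat X_{n,\usn})=\sigma'(\widehat X_{n,\usn})$ and $\sigma$ is $C^1$ with Lipschitz derivative there, so a first-order Taylor expansion of $\sigma$ together with the explicit one-step increment makes the two terms of order $W_s-W_{\usn}$ cancel, leaving $g_{n,s}$ of size $O(n^{-1})$ there (uniformly in $L_q$) and hence a contribution $O(n^{-2})$ to $\int_0^1 g_{n,s}^2\,ds$ in $L_{p/2}$. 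On the remaining steps one uses only $|g_{n,s}|\le c(|\widehat X_{n,s}-\widehat X_{n,\usn}|+|W_s-W_{\usn}|)$; summing $g_{n,s}^2$ over these steps, using the one-step moment bounds (whose conditional size is $O(n^{-1})$ per step, on the $\F_{\usn}$-measurable bad event where the coefficients are bounded) and then the counting bound for the $\eta_j$, gives a contribution of order $n^{-2}\|N_{n,\eta_j}\|_{p/2}\le n^{-3/2}$ in $L_{p/2}$. Thus $\bigl\|\int_0^1 g_{n,s}^2\,ds\bigr\|_{p/2}\le c\,n^{-3/2}$ and $\sup_{t\le1}\|B_{n,t}\|_p\le c\,n^{-3/4}$, which with the Gronwall reduction and $\sup_{t\le1}\|A_{n,t}\|_p\le c\,n^{-1}$ gives~\eqref{jjj}. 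The exponent $3/4$ is forced by the diffusion-coefficient kinks: they produce $\sim n^{1/2}$ bad steps, each contributing $\sim n^{-2}$ to $\int_0^1 g_{n,s}^2\,ds$, so $\int_0^1 g_{n,s}^2\,ds\sim n^{-3/2}$ and $\|B_{n,t}\|_p\sim n^{-3/4}$. If $k_\sigma=0$ there are no points $\eta_j$, every step is of the first type, $g_{n,s}$ is $O(n^{-1})$ throughout, $\sup_{t\le1}\|B_{n,t}\|_p\le c\,n^{-1}$, and~\eqref{ppp} follows.
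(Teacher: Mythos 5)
Your overall skeleton coincides with the paper's: the time-continuous interpolation with the integral representation, the splitting of the error into globally Lipschitz terms (absorbed by Gronwall after Burkholder--Davis--Gundy), a drift local-error term whose leading part $\int_0^t\mu'\sigma(\widehat X_{n,\usn})(W_s-W_{\usn})\,ds$ is treated as a discrete martingale of size $O(n^{-1})$ (this is exactly the term $U_{n,t}$ in Section~\ref{4.3}), and a diffusion local-error term whose quadratic variation near the kinks dictates the rate $3/4$. However, the quantitative core is missing. The high-moment counting bound $\|N_{n,\zeta}\|_q\le c\,n^{1/2}$ (equivalently the $L_q$-bound on the occupation time of an $n^{-1/2}$-window) is precisely the hard part of the paper: it is the content of Section~\ref{4.2}, where the first moment is obtained via the semimartingale local time of $\widehat X_n$ and the occupation time formula (Lemma~\ref{occup}), and the higher moments of the relevant weighted occupation integrals are obtained by a Markov-property conditioning at grid points (Lemmas~\ref{markov},~\ref{central},~\ref{key}) and an induction on the power $p$ (Proposition~\ref{prop1}). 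Your proposed mechanism --- a Lebesgue density of $\widehat X_{n,\ell/n}$ bounded near $\zeta$ uniformly in $n,\ell$, plus an unspecified ``quantitative decorrelation'' --- is not established and is genuinely delicate, since (B3) gives nondegeneracy only at the singular points; $\sigma$ may vanish elsewhere, and grid values can even have atoms inside the window, so asserting this bound is assuming the theorem's main difficulty.

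There is also a concrete error in your good/bad dichotomy for $B_{n,t}$. On a ``good'' step with $c\,n^{-1/2}\le|\widehat X_{n,\usn}-\eta_j|\le K n^{-1/2}$ the within-step increment is of order $n^{-1/2}$, so the path crosses the kink with probability bounded below by a constant (Gaussian tail truncation at level $\sqrt{\log n}$ does not prevent this), and on the crossing event the first-order Taylor remainder is only of order $|\widehat X_{n,s}-\widehat X_{n,\usn}|\sim n^{-1/2}$, not $O(n^{-1})$; hence your claim that $g_{n,s}=O(n^{-1})$ uniformly in $L_q$ on good steps is false. Enlarging the bad zone to radius $C n^{-1/2}\sqrt{\log n}$ repairs the good steps but then your counting bound only gives $\sim n^{1/2}\sqrt{\log n}$ bad steps, so this route loses a logarithmic factor against \eqref{jjj}. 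The paper avoids both problems by working directly with the sign-change events $\{(\widehat X_{n,t}-\xi)(\widehat X_{n,\utn}-\xi)\le 0\}$ and keeping the Gaussian-weighted distance $\tfrac{c}{\sqrt n}(1+|z|)$ in Lemma~\ref{central}, which is what makes Proposition~\ref{prop1} give the clean bound $c\,n^{-(q+1)/2}$ without logarithms. To complete your argument you would need either to reproduce that machinery or to prove your density/decorrelation claims in a form strong enough to control all moments without the log loss.
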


Note that if $k_\sigma=0$ then $\sigma$ is differentiable on $\R$ and thus $\eul_n$ coincides with the classical  Milstein scheme. 
However, 
the upper error bound \eqref{ppp} was known 
in the literature so far  only in the case of $k_\mu = k_\sigma =0$, see e.g.~\cite{HMGR01}.

For illustration of the statement of Theorem~\ref{introthm2}  we consider the SDEs
\begin{equation}\label{exx2}
\begin{aligned}
dX^{(1)}_t & =  X^{(1)}_t\cdot 1_{[0,\infty)}(X^{(1)}_t)\, dt +  (1+X^{(1)}_t\cdot  1_{[0,\infty)}(X^{(1)}_t))\, dW_t,\quad t\in [0,1],\\
X^{(1)}_0 & = x_0
\end{aligned}
\end{equation}
and
\begin{equation}\label{exx22}
\begin{aligned}
dX^{(2)}_t & =  X^{(2)}_t\cdot 1_{[0,\infty)}(X^{(2)}_t)\, dt +   dW_t,\quad t\in [0,1],\\
X^{(2)}_0 & = x_0,\\
\end{aligned}
\end{equation}
Clearly, the assumptions  (B1) to (B3) are satisfied for the coefficients of the SDE \eqref{exx2} with $k_\mu=k_\sigma=1$ and $\xi_1 =\eta_1= 0$ and for the coefficients of the SDE \eqref{exx22} with $k_\mu=1$, $k_\sigma=0$  and $\xi_1 =0$.
The best possible $L_p$-error rate for approximation of $X^{(1)}_1$ and $X^{(2)}_1$ which was available in the literature so far is equal to $1/2$ and is achieved, e.g., by the Euler-Maruyama scheme.
 However, by Theorem~\ref{introthm2} we see that  for all $p\in [1,\infty)$ the associated quasi-Milstein scheme  achieves 
 an $L_p$-error rate of at least   $3/4$ and  $1$   for approximation of $X^{(1)}_1$ and $X^{(2)}_1$, respectively.

We briefly describe the content of the paper. In Section~\ref{Not} we introduce some notation. Section~\ref{QM} contains our result Theorem~\ref{Thm1} 
on the quasi-Milstein scheme under the assumptions (B1) to (B3). In Section~\ref{threefour} we 
 construct 
the bi-Lipschitz transformation $G$ that is then used to construct a method of order $3/4$ under the assumptions (A1) to (A3). Section~\ref{disc} contains a discussion of our results as well as conjectures with respect to lower error bounds. The proof of Theorem~\ref{Thm1} is carried out in Section~\ref{Proofs}. Section~\ref{Lem} contains 
proofs of two lemmas that are employed in Section~\ref{threefour} for the construction of the mapping $G$.

\section{Notation}\label{Not}
For $A\subset \R$ and a function $f\colon A\to\R$ we put $\|f\|_\infty = \sup_{x\in A}|f(x)|$. For a function $f\colon \R\to\R$ we define $\delta_f\colon \R\to\R$ by
\[
\delta_f(x) = \begin{cases} f'(x), & \text{if $f$ is differentiable in $x$},\\
0, & \text{otherwise.}\end{cases}
\]

\section{A quasi-Milstein scheme for SDEs with Lipschitz continuous coefficients}\label{QM}

Let
$ ( \Omega, \mathcal{F}, \PP ) $ 
be a probability space with a normal filtration
$ ( \mathcal{F}_t )_{ t \in [0,1] } $ and
let
$
  W \colon [0,1] \times \Omega \to \R
$
be an 
$ ( \mathcal{F}_t )_{ t \in [0,1] } $-Brownian motion
on $ ( \Omega, \mathcal{F}, \PP )$. Moreover, let $x_0\in\R$ and let $\mu, \sigma\colon\R\to\R$ be functions that satisfy the following three assumptions. 
\begin{itemize}
\item[(B1)] $\mu$ and $\sigma$ are Lipschitz continuous on $\R$,
\item[(B2)] there exist $k_\mu,k_\sigma\in\N_0$ and $\xi_0, \ldots, \xi_{k_\mu+1}\in [-\infty,\infty]$  with $-\infty=\xi_0<\xi_1<\ldots < \xi_{k_\mu} <\xi_{k_\mu+1}=\infty$ as well as  $\eta_0, \ldots, \eta_{k_\sigma+1}\in [-\infty,\infty]$  with $-\infty=\eta_0<\eta_1<\ldots < \eta_{k_\sigma} <\eta_{k_\sigma+1}=\infty$ such that
 $\mu$ is differentiable on the interval $(\xi_{i-1}, \xi_i)$ with Lipschitz continuous derivative for all $i\in\{1, \ldots, k_\mu+1\}$ and $\sigma$ is differentiable on the interval $(\eta_{i-1}, \eta_i)$ with Lipschitz continuous derivative for all $i\in\{1, \ldots, k_\sigma+1\}$
\item[(B3)] $\sigma(\xi_i) \neq 0$ for all $i\in\{1,\ldots,k_\mu\}$ and $\sigma(\eta_i) \neq 0$ for all $i\in\{1,\ldots,k_\sigma\}$.
\end{itemize}

We consider the SDE
\begin{equation}\label{sde01}
\begin{aligned}
dX_t & = \mu(X_t) \, dt + \sigma(X_t) \, dW_t, \quad t\in [0,1],\\
X_0 & = x_0,
\end{aligned}
\end{equation}
which has a unique strong solution 
due to the assumption  
(B1).

Moreover, for every $p\in (0,\infty)$,
\begin{equation}\label{mom}
\EE\bigl[\|X\|_\infty^p\bigr] < \infty,
\end{equation}
see, e.g.~\cite[Thm. 2.4.4]{Mao08}.

For $n\in\N$ we  
use $\eul_{n}=(\eul_{n,t})_{t\in[0,1]}$ to denote
a time-continuous quasi-Milstein scheme with step-size $1/n$ associated to the SDE \eqref{sde01}, which is defined recursively by
$\eul_{n,0}=x_0$ and
\begin{align*}
\eul_{n,t}=\eul_{n,i/n}&+\mu(\eul_{n,i/n})\cdot (t-i/n)+\sigma(\eul_{n,i/n})\cdot (W_t-W_{i/n})\\
&+\frac{1}{2}\sigma \delta_\sigma (\eul_{n,i/n})\cdot\bigl((W_t-W_{i/n})^2-(t-i/n)\bigr)
\end{align*}
for $t\in (i/n,(i+1)/n]$ 
and $i\in\{0,\ldots,n-1\}$. 
Note that for all $x\not\in\{\eta_1, \ldots, \eta_{k_\sigma}\}$ we have $\delta_\sigma(x) = \sigma'(x)$.

 We have the following error estimates for $\eul_{n}$.
\begin{theorem}\label{Thm1}
Assume (B1) to (B3). Let $p\in [1,\infty)$. Then
there exists $c\in(0, \infty)$ such that for all $n\in\N$, 
\begin{equation}\label{ll3}
\EE\bigl[\|X-\eul_{n}\|_\infty^p\bigr]^{1/p}\leq \frac{c}{ n^{3/4}}. 
\end{equation}
If, additionally, $k_\sigma=0$ then there exists $c\in(0, \infty)$ such that for all $n\in\N$, 
\begin{equation}\label{ll33}
\EE\bigl[\|X-\eul_{n}\|_\infty^p\bigr]^{1/p}\leq \frac{c}{n}. 
\end{equation}
\end{theorem}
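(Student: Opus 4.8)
The plan is to reduce the analysis to two regimes according to whether the current position of the scheme lies in a ``smooth cell'' (a small neighbourhood of which contains no discontinuity of $\mu'$ or $\sigma'$) or in a ``bad cell'' (a neighbourhood containing one of the finitely many points $\xi_i$ or $\eta_i$). On a smooth cell the quasi-Milstein update is a genuine Milstein step, and the classical local/global error analysis via It\^o--Taylor expansion and the Burkholder--Davis--Gundy inequality gives a one-step error of order $n^{-3/2}$ (with the remainder controlled by Lipschitz continuity of $\mu'$ and $\sigma'$), hence a contribution of order $n^{-1}$ after summing. The whole loss of rate, from $1$ down to $3/4$, must therefore come from the bad cells. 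The key probabilistic fact is an occupation-time type estimate: because $\sigma(\xi_i)\neq 0$ and $\sigma(\eta_i)\neq 0$ (assumption (B3)), the solution $X$ and the scheme $\eul_n$ behave near a discontinuity point like a Brownian motion with non-degenerate local variance, so for any $\eps>0$,
\begin{equation*}
\PP\bigl(\exists\, t\in[i/n,(i+1)/n]:\ \operatorname{dist}(X_t,\{\xi_j,\eta_j\})\le \eps\bigr)\le c\,(\eps + n^{-1/2}),
\end{equation*}
and likewise for $\eul_n$; summing over $i$ and optimising over $\eps\sim n^{-1/2}$ shows that the \emph{expected number} of steps during which the scheme is within $n^{-1/2}$ of a discontinuity is of order $n^{1/2}$.

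Concretely I would proceed as follows. First establish a priori moment bounds: $\EE[\|\eul_n\|_\infty^p]$ bounded uniformly in $n$ (by the Lipschitz growth in (B1) and a discrete Gronwall argument using BDG for the stochastic and quadratic terms), and a one-step stability estimate $\EE[\max_{t\in[i/n,(i+1)/n]}|\eul_{n,t}-\eul_{n,i/n}|^p\mid\F_{i/n}]\le c\,n^{-p/2}$. Second, define the one-step defect of the scheme along the exact solution, i.e. plug $X$ into the quasi-Milstein update and measure the discrepancy with $X_{(i+1)/n}$; split this defect into the ``drift-Taylor'' remainder, the ``diffusion-Milstein'' remainder, and the ``$\delta_\sigma$ versus $\sigma'$'' discrepancy. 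On steps that avoid the bad zones each remainder is $O(n^{-3/2})$ in conditional $L_p$; on bad steps one only gets the trivial bound $O(n^{-1/2})$ from step-increment size, but these steps are rare by the occupation estimate. Third, set up the global error recursion for $e_i:=\EE[\sup_{t\le i/n}|X_t-\eul_{n,t}|^p]^{1/p}$ (or its conditional version along a coupling), bounding the propagation of error through one step by $(1+c/n)$ using Lipschitz continuity of $\mu,\sigma$ and the mean-value form $\sigma\delta_\sigma(x)-\sigma\delta_\sigma(y)$, which is Lipschitz off the finitely many points and otherwise bounded. Fourth, apply discrete Gronwall: the accumulated smooth defect contributes $n\cdot n^{-3/2}=n^{-1/2}$, wait --- one must be careful --- in fact the smooth defects are martingale-like increments so their $L_p$-sum is $n^{1/2}\cdot n^{-3/2}=n^{-1}$, while the bad defects contribute (number of bad steps)$^{1/p}\cdot$(size) which, with $n^{1/2}$ bad steps of size $n^{-1/2}$ entering quadratically in the variance, again yields $n^{-3/4}$; balancing these gives the rate $3/4$. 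For the second assertion, when $k_\sigma=0$ the scheme is the exact Milstein scheme with globally Lipschitz $\sigma'=\delta_\sigma$, so the ``$\delta_\sigma$ versus $\sigma'$'' discrepancy vanishes identically and only $\mu'$ has jumps; one then shows that crossing a jump of $\mu'$ costs nothing at the Milstein order because $\mu$ enters only through the $dt$-term whose local contribution is already $O(n^{-1})$ per step --- more precisely the drift remainder is bounded by $n^{-1}\cdot|\mu(X_{i/n})-\mu(X_t)| = O(n^{-3/2})$ regardless of smoothness of $\mu'$ --- giving the full rate $1$.

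The main obstacle, and the place where real work is needed, is the occupation-time estimate near the discontinuities and its interaction with the error recursion: one must control not merely the probability that $X$ or $\eul_n$ is close to a bad point, but the event that \emph{they straddle it}, since that is when the Lipschitz-off-finitely-many-points property of $x\mapsto\sigma(x)\delta_\sigma(x)$ and of the one-step maps fails to give a clean contraction. I expect this to require a coupling of $X$ and $\eul_n$ together with a comparison to an auxiliary It\^o process whose diffusion coefficient is bounded below near each $\xi_i,\eta_i$, so that a density bound (or a reflection/Girsanov argument) yields $\PP(|X_t - \xi_i|\le \eps)\le c\eps/\sqrt{t}$ uniformly, and similarly for the piecewise-linear interpolation of the scheme. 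Handling all $p\in[1,\infty)$ simultaneously then forces one to track $L_p$-norms of the \emph{number} of bad steps, for which a Bernstein-type concentration (the bad-step indicators being dominated by a sum with small conditional means) is the natural tool. Everything else --- the It\^o--Taylor remainders, the BDG applications, the discrete Gronwall --- is routine once these two ingredients (rare bad steps, controlled propagation across them) are in place.
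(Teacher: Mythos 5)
Your high-level plan (a priori moment bounds, one-step defect decomposition, rare ``bad'' steps near the discontinuities, BDG plus Gronwall, and the arithmetic $n^{1/2}$ bad steps entering quadratically in the quadratic variation to give $n^{-3/4}$) is the same mechanism as in the paper, but the decisive quantitative ingredient is missing and the substitutes you propose would not deliver it. What the error recursion actually requires is an $L_p$-bound, for \emph{every} $p$, of the form $\EE\bigl[\bigl|\int_0^1 |\eul_{n,t}-\eul_{n,\utn}|^q\cdot 1_{\{(\eul_{n,t}-\xi)(\eul_{n,\utn}-\xi)\le 0\}}\,dt\bigr|^p\bigr]\le c\,n^{-(q+1)p/2}$ (Proposition~\ref{prop1}), and note that the straddle that matters is between two values of the \emph{scheme}, at $t$ and $\utn$ --- not between $X$ and $\eul_n$: assumption (B1) already gives $|\mu(X_t)-\mu(\eul_{n,t})|\le c|X_t-\eul_{n,t}|$ and likewise for $\sigma$, so your proposed coupling of $X$ and $\eul_n$, density bounds for $X$, and control of the event that ``they straddle'' a bad point attack a harder problem than necessary and do not yield the needed quantity. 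More seriously, your route to all $p$ --- Bernstein-type concentration based on bad-step indicators having ``small conditional means'' --- fails: whenever the scheme sits within $n^{-1/2}$ of a discontinuity at a grid point, the conditional probability of a bad step is of order one; smallness appears only after time-averaging. The paper resolves this with a look-back argument (Lemma~\ref{central}): on the straddle event at time $t$ the scheme already at the earlier time $\utn-(t-\utn)$ lies within $c(1+|z|)/\sqrt{n}$ of $\xi$, with the subsequent Gaussian increments independent of the past; this is combined with the Markov property of the scheme (Lemma~\ref{markov}), an occupation-time bound for the scheme itself proved via Tanaka's formula and the occupation-time formula, uniform in the starting point (Lemma~\ref{occup}, using (B3) to bound the quadratic variation below near $\xi$), and an induction over integer moments written as time-ordered iterated integrals with $\F_s$-measurable weights (Lemma~\ref{key}). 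Nothing in your sketch replaces this conditional, weighted structure, and your per-interval bound $\PP(\exists t\in[i/n,(i+1)/n]:\operatorname{dist}(X_t,\{\xi_j,\eta_j\})\le\eps)\le c(\eps+n^{-1/2})$ is false as stated (it is of order one on steps starting near a discontinuity); only its time-integrated version holds.

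The justification you give for the rate $1$ in the case $k_\sigma=0$ is also flawed. A per-step drift defect of size $n^{-3/2}$ sums, by the triangle inequality, only to $n^{-1/2}$; the $ds$-integral defect has no martingale structure by itself, so ``$\mu$ enters only through the $dt$-term'' does not make jumps of $\mu'$ free. To reach order $n^{-1}$ for the drift part one must linearize $\mu$ where it is differentiable, isolate the linear term $\int_0^t\sigma\delta_\mu(\eul_{n,\usn})(W_s-W_{\usn})\,ds$ (the term $U_{n,t}$ in the paper), which is a martingale along the grid and is shown to be $O(n^{-1})$ in $L_p$ via BDG, and control the remaining straddle term $|\eul_{n,t}-\eul_{n,\utn}|\cdot 1_{S_\mu}(\eul_{n,t},\eul_{n,\utn})$ by the same occupation machinery with $q=1$, which gives exactly order $n^{-1}$. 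In other words, the piecewise smoothness of $\mu$ in (B2) and Proposition~\ref{prop1} are still needed for the second assertion; they are what make the drift discontinuities of $\mu'$ affordable at rate $1$, rather than being negligible for the naive reason you state.
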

The proof of Theorem \ref{Thm1} is postponed to Section~\ref{Proofs}.

\begin{rem}\label{Rem1} 
Note that if (B1) to (B3) are satisfied with $k_\sigma=0$ then $\eul_n$ coincides with the classical time-continuous Milstein scheme. We add that in case of $k_\mu = k_\sigma =0$ and under much stronger smoothness 
assumptions
on $\mu$ and $\sigma$ than stated in (B1) and (B2), the error estimate~\eqref{ll33} is known, see e.g.~\cite[Thm. 10.6.3]{kp92}.
\end{rem}  

\begin{rem}\label{Rem1b}
In~\cite{KruseWu18} a randomized Milstein scheme is constructed that is based on evaluations of $W$ at the grid points $\ell/n$, $\ell=1,\dots,n$, and randomly chosen intermediate points $s_\ell\in ((\ell-1)/n,\ell/n)$, $\ell =1,\dots,n$. This scheme is shown to achieve for all $p\in[1,\infty)$ an $L_p$-error rate of at least $1$ in terms of $n$ under assumptions that are, in comparison with (B1) to (B3), weaker with respect to $\mu$ and stronger with respect to $\sigma$, namely the assumptions that $\mu$ is  Lipschitz continuous on $\R$, $\sigma$ is differentiable on $\R$ with a bounded Lipschitz continuous derivative $\sigma'$ and $\sigma\sigma'$ is Lipschitz continuous on $\R$.
  \end{rem}

\section{A strong order $3/4$ method for SDEs with discontinuous drift coefficient}\label{threefour}
As in Section~\ref{QM} we consider a  probability space
$ ( \Omega, \mathcal{F}, \PP ) $ 
with a normal filtration
$ ( \mathcal{F}_t )_{ t \in [0,1] } $ and we assume that 
$
  W \colon [0,1] \times \Omega \to \R
$
is an 
$ ( \mathcal{F}_t )_{ t \in [0,1] } $-Brownian motion
on $ ( \Omega, \mathcal{F}, \PP )$. In contrast to Section~\ref{QM} we now turn to SDEs with a drift coefficient $\mu$ that may be only piecewise Lipschitz continuous.

Let $x_0\in\R$ and let $\mu, \sigma\colon\R\to\R$ be functions that satisfy the following three assumptions.  
\begin{itemize}
\item[(A1)] There exist $k\in\N$ 
and $\xi_0, \ldots, \xi_{k+1}\in [-\infty,\infty]$ with $-\infty=\xi_0<\xi_1<\ldots < \xi_k <\xi_{k+1}=\infty$ such that
 $\mu$ is Lipschitz continuous on the interval $(\xi_{i-1}, \xi_i)$ for all $i\in\{1, \ldots, k+1\}$,
\item[(A2)] $\sigma$ is Lipschitz continuous on $\R$ and $\sigma(\xi_i) \neq 0$ for all $i\in\{1,\ldots,k\}$,
\item[(A3)] $\mu$ and $\sigma$ are differentiable on the interval  $(\xi_{i-1}, \xi_i)$ with Lipschitz continuous derivatives  for all $i\in\{1, \ldots, k+1\}$.
\end{itemize}

For later purposes we note that (A1) implies the existence of the one-sided limits $\mu(\xi_i-)$ and $\mu(\xi_i+)$ for all $i\in\{1,\dots,k\}$.

We consider the SDE
\begin{equation}\label{sde0}
\begin{aligned}
dX_t & = \mu(X_t) \, dt + \sigma(X_t) \, dW_t, \quad t\in [0,1],\\
X_0 & = x_0,
\end{aligned}
\end{equation}
which has a unique strong solution,
see~\cite[Theorem 2.2]{LS16}.

Our goal is to show that the solution of~\eqref{sde0} at the final time $X_1$ can be approximated in $p$-th mean sense by means of a method based on $W_{1/n},W_{2/n},\dots,W_1$ at least with order $3/4$ in terms of the number $n$ of equidistant evaluations of the driving Brownian motion $W$, see Theorem~\ref{Thm2}. To achieve this goal we adopt 
the
transformation strategy used in~\cite{LS15b} and~\cite{MGY18a}. We show that 
$X_1$ can be obtained by applying a Lipschitz continuous transformation to the solution of an SDE with coefficients satisfying the assumptions (B1) to (B3) in Section~\ref{QM}, and then we employ Theorem~\ref{Thm1}.

We start by introducing the transformation procedure. For $k\in\N$, 
\[
z\in\Tm_k=\{(z_1,\dots,z_k)\in\R^k\colon z_1<\dots<z_k\}
\]
 and $\alpha=(\alpha_1,\dots,\alpha_k)\in\R^k$ we put
\[
\rho_{z,\alpha} =  \begin{cases}
\frac{1}{8 |\alpha_1|}, & \text{if }k=1, \\
\min\bigl(\bigl\{\frac{1}{8 |\alpha_i|}\colon i\in \{1, \ldots, k\}\bigr\} \cup \bigl\{ \frac{z_i-z_{i-1}}{2}\colon i\in \{2, \ldots, k\}\bigr\} \bigr),& \text{if }k\geq 2,
\end{cases}
\]
where we use the convention $1/0 =\infty$. Let $\phi\colon\R\to\R$ be given by
\begin{equation}\label{phi}
\phi(x)=(1-x^2)^4\cdot \ind_{[-1, 1]}(x). 
\end{equation}
For all $k\in\N$, $z\in \Tm_k$, $\alpha\in\R^k$ and $\nu\in (0,\rho_{z,\alpha})$ we define a function $G_{z,\alpha,\nu}\colon\R\to\R$ by
\begin{equation}\label{fct1}
G_{z,\alpha,\nu}(x) = x+\sum_{i=1}^k \alpha_i\cdot (x-z_i)\cdot |x-z_i|\cdot \phi \Bigl(\frac{x-z_i}{\nu}\Bigr).
\end{equation}

The following two technical lemmas provide the properties of the mappings $G_{z,\alpha,\nu}$ that are crucial for our purposes. The proofs of both lemmas are postponed to Section~\ref{Lem}. 

\begin{lemma}\label{lemx1}
Let $k\in\N$, $z\in \Tm_k$, $\alpha\in\R^k$, $\nu\in (0,\rho_{z,\alpha})$ and put $z_0=-\infty$ and $z_{k+1}= \infty$. The function $G_{z,\alpha,\nu}$ has the following properties.
\begin{itemize}
\item[(i)] $G_{z,\alpha,\nu}$  is differentiable on $\R$ with a Lipschitz continuous derivative $G'_{z,\alpha,\nu}$ that satisfies $G'_{z,\alpha,\nu}(z_i) = 1$ for all $i\in\{1,\dots,k\}$ and $\inf_{x\in\R} G_{z,\alpha,\nu}'(x)>0$.
In particular, $G_{z,\alpha,\nu}$ has an inverse $G_{z,\alpha,\nu}^{-1}\colon \R\to \R$ that is Lipschitz continuous. Furthermore, there exists $c \in (0,\infty)$ such that for every $x\in\R$ with $|x|>c$, $G'_{z,\alpha,\nu}(x) = 1$. 
\item[(ii)] For every $i\in\{1,\dots,k+1\}$, the function $G'_{z,\alpha,\nu}$ is two times differentiable on $(z_{i-1},z_i)$ with Lipschitz continuous derivatives $G''_{z,\alpha,\nu}$ and $G'''_{z,\alpha,\nu}$.
\item[(iii)] For every $i\in\{1,\dots,k\}$ the one-sided limits  $G''_{z,\alpha,\nu}(z_i-) $ and $G''_{z,\alpha,\nu}(z_i+)$ exist and satisfy
\[
G''_{z,\alpha,\nu}(z_i-) = -2\alpha_i,\quad G''_{z,\alpha,\nu}(z_i+) = 2\alpha_i.
\]
\end{itemize}
\end{lemma}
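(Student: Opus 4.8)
The plan is to prove Lemma~\ref{lemx1} by direct computation, exploiting the explicit form \eqref{fct1} of $G_{z,\alpha,\nu}$ together with the key structural fact that the summands have disjoint supports. First I would record the elementary properties of the bump function $\phi$ from \eqref{phi}: $\phi$ is $C^3$ on $\R$ (since $(1-x^2)^4$ vanishes to order $4$ at $x=\pm1$), $\phi(0)=1$, $\phi'(0)=0$, $\phi$ is supported on $[-1,1]$, and $\phi,\phi',\phi'',\phi'''$ are bounded with bounded derivatives on $[-1,1]$. Consequently the $i$-th summand $g_i(x)=\alpha_i(x-z_i)|x-z_i|\phi((x-z_i)/\nu)$ is supported on $[z_i-\nu,z_i+\nu]$, and by the choice $\nu<\rho_{z,\alpha}\le (z_i-z_{i-1})/2$ these supports are pairwise disjoint; hence on any point $x$ at most one summand is active and all local computations reduce to the single-bump case. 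I would then compute $g_i'$ on $\R$: writing $(x-z_i)|x-z_i| = \sgn(x-z_i)(x-z_i)^2$, one gets $g_i'(x)=2\alpha_i|x-z_i|\phi((x-z_i)/\nu)+\tfrac{\alpha_i}{\nu}(x-z_i)|x-z_i|\phi'((x-z_i)/\nu)$, which is continuous (the $|x-z_i|$ factors kill the jump of $\sgn$) with $g_i'(z_i)=0$. This gives part~(i): $G_{z,\alpha,\nu}$ is differentiable on $\R$ with $G'_{z,\alpha,\nu}(x)=1+\sum_i g_i'(x)$, hence $G'_{z,\alpha,\nu}(z_i)=1$; Lipschitz continuity of $G'$ follows from boundedness of $g_i''$ away from the $z_i$ plus continuity at $z_i$ (a piecewise-Lipschitz function that is continuous is Lipschitz); and $\inf_x G'_{z,\alpha,\nu}(x)>0$ follows because $|g_i'(x)|\le 2|\alpha_i|\nu\|\phi\|_\infty + |\alpha_i|\nu\|\phi'\|_\infty$ on its support, and the precise constant $\rho_{z,\alpha}$ with the factor $1/(8|\alpha_i|)$ and the explicit bounds $\|\phi\|_\infty=1$, $\|\phi'\|_\infty$ are chosen exactly so that this is $<1$ (one checks $\max_{[-1,1]}|\phi'|$ numerically/analytically and verifies $2\cdot\tfrac18 + \tfrac18\|\phi'\|_\infty<1$). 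The statement $G'_{z,\alpha,\nu}(x)=1$ for $|x|>c$ is immediate with $c=\max_i|z_i|+\nu$, and the existence of a Lipschitz inverse then follows from the inverse function theorem together with $\inf G'>0$ and $\sup G'<\infty$.

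For part~(ii), on each open interval $(z_{i-1},z_i)$ at most one bump $g_j$ (namely $j=i-1$ or $j=i$, whichever endpoint is finite) is active, and away from its center $z_j$ the factor $(x-z_j)|x-z_j|$ equals the smooth function $\pm(x-z_j)^2$; since $(x-z_j)^2$ and $\phi((x-z_j)/\nu)$ are both $C^\infty$ there, $g_j$ is $C^\infty$ on $(z_{i-1},z_i)$, so $G'_{z,\alpha,\nu}$ is $C^\infty$ — in particular twice differentiable with $G''$ and $G'''$ Lipschitz on that interval (bounded third and fourth derivatives on the compact relevant piece, constant $=1$ outside). For part~(iii) I would differentiate once more: on the side $x>z_i$ one has $g_i(x)=\alpha_i(x-z_i)^2\phi((x-z_i)/\nu)$, so $g_i''(x)=2\alpha_i\phi((x-z_i)/\nu)+\tfrac{4\alpha_i}{\nu}(x-z_i)\phi'((x-z_i)/\nu)+\tfrac{\alpha_i}{\nu^2}(x-z_i)^2\phi''((x-z_i)/\nu)$, and letting $x\downarrow z_i$ the last two terms vanish while $\phi(0)=1$, giving $G''_{z,\alpha,\nu}(z_i+)=2\alpha_i$; symmetrically, on $x<z_i$ one has $g_i(x)=-\alpha_i(x-z_i)^2\phi((x-z_i)/\nu)$, yielding $G''_{z,\alpha,\nu}(z_i-)=-2\alpha_i$.

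The computations are entirely routine; the only point requiring genuine care — and the one I would expect to be the main obstacle — is verifying the quantitative estimate $\inf_{x\in\R}G'_{z,\alpha,\nu}(x)>0$ with the specific constant $\rho_{z,\alpha}$, i.e.\ pinning down the numerical bound on $\|\phi'\|_\infty$ over $[-1,1]$ and checking that the sum $2|\alpha_i|\nu\,\|\phi\|_\infty+|\alpha_i|\nu\,\|\phi'\|_\infty$ stays below $1$ for $\nu<\tfrac{1}{8|\alpha_i|}$; the exponent $4$ in $\phi(x)=(1-x^2)^4\ind_{[-1,1]}(x)$ is what simultaneously secures $C^3$-regularity (needed for (ii), since $G'''$ must be Lipschitz, hence $\phi'''$ must at least be bounded and the extra smoothness at $\pm1$ guarantees it) and keeps $\|\phi'\|_\infty$ small enough for the factor $1/8$ to work. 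Everything else — continuity at the $z_i$ of $G'$, the values $G'(z_i)=1$, $G''(z_i\pm)=\mp?2\alpha_i$ wait $=-2\alpha_i,+2\alpha_i$, the behaviour at infinity, and the inversion — drops out of the disjoint-support reduction and the elementary identity $(x-z_i)|x-z_i|=\sgn(x-z_i)(x-z_i)^2$.
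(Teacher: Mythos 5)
Your proposal follows essentially the same route as the paper: differentiate the explicit formula \eqref{fct1}, use that the bumps have pairwise disjoint supports because $\nu<\rho_{z,\alpha}$, bound the perturbation of the derivative by a constant proportional to $|\alpha_i|\nu$ to get $\inf_{x}G'_{z,\alpha,\nu}(x)>0$, and compute the one-sided limits of $G''_{z,\alpha,\nu}$ at $z_i$ from the representation $\pm\alpha_i(x-z_i)^2\phi((x-z_i)/\nu)$. The paper simply writes the derivative of the $i$-th summand in the single product form $2\alpha_i\nu\,\tfrac{|x-z_i|}{\nu}\bigl(1-(\tfrac{x-z_i}{\nu})^2\bigr)^3\bigl(1-5(\tfrac{x-z_i}{\nu})^2\bigr)$ on $[z_i-\nu,z_i+\nu]$ and bounds this crudely by $8|\alpha_i|\nu$, giving $G'_{z,\alpha,\nu}\ge 1-8|\alpha_i|\nu>0$ directly from $\nu<\tfrac{1}{8|\alpha_i|}$, whereas you use the triangle-inequality bound $2|\alpha_i|\nu\|\phi\|_\infty+|\alpha_i|\nu\|\phi'\|_\infty$ and leave the numerical check open. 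That check does close: $\|\phi'\|_\infty=\max_{|x|\le1}8|x|(1-x^2)^3\le 8\max_{|x|\le1}|x|(1-x^2)=\tfrac{16}{3\sqrt 3}<6$, so $2\cdot\tfrac18+\tfrac18\,\|\phi'\|_\infty<1$; so this step is fine, just not carried out.

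One claim should be corrected, though it is not fatal. In (ii) you assert that each active summand, hence $G'_{z,\alpha,\nu}$, is $C^\infty$ on $(z_{i-1},z_i)$. This is false: $\phi$ is only $C^3$ at $\pm1$ (the fourth derivative of $(1-x^2)^4$ at $\pm1$ equals $384\neq0$ while $\phi$ vanishes outside $[-1,1]$), and the points $z_j\pm\nu$ where this loss of smoothness occurs lie inside the open intervals. What is true, and all that the lemma requires, is that away from $z_j$ the summand equals $\pm\alpha_j(x-z_j)^2\phi((x-z_j)/\nu)$ with $\phi\in C^3$ and $\phi'''$ continuous, piecewise polynomial and compactly supported, hence Lipschitz; this yields existence of $G''_{z,\alpha,\nu}$ and $G'''_{z,\alpha,\nu}$ on $(z_{i-1},z_i)$ together with their Lipschitz continuity, which is exactly the regularity count you state yourself in your final paragraph, so the fix is only to replace the $C^\infty$ claim by it. A further cosmetic slip: on an interval $(z_{i-1},z_i)$ with both endpoints finite, both bumps $g_{i-1}$ and $g_i$ are active on disjoint pieces near the two endpoints; pointwise at most one summand is nonzero, which is all your argument actually uses. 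With these repairs the proof is correct and matches the paper's argument, including the computation $G''_{z,\alpha,\nu}(z_i\pm)=\pm2\alpha_i$ in (iii).
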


\begin{lemma}\label{transform1} Assume (A1) to (A3). Put $\xi=(\xi_1,\dots,\xi_k)$, define $\alpha=(\alpha_1,\dots,\alpha_k)\in \R^k$ by
\[
\alpha_i =\frac{\mu(\xi_i-)-\mu(\xi_i+)}{2 \sigma^2(\xi_i)}
\]
 for  $i\in\{1,\dots,k\}$, and let $\nu\in (0,\rho_{\xi,\alpha})$. Consider the function $G_{\xi,\alpha,\nu}$ and extend $G''_{\xi,\alpha,\nu}\colon \cup_{i=1}^{k+1} (\xi_{i-1},\xi_i)\to \R$ to the whole real line by taking
 \[
 G''_{\xi,\alpha,\nu}(\xi_i) = 2\alpha_i + 2\,\frac{\mu(\xi_i+)-\mu(\xi_i)}{\sigma^2(\xi_i)}  
\]
for $i\in\{1, \ldots, k\}$. Then the functions
\begin{equation}\label{tildecoeff}
\widetilde \mu=(G_{\xi,\alpha,\nu}'\cdot \mu+\tfrac{1}{2}G_{\xi,\alpha,\nu}''\cdot\sigma^2)\circ G_{\xi,\alpha,\nu}^{-1} \, \text{ and }\, \widetilde\sigma=(G_{\xi,\alpha,\nu}'\cdot\sigma)\circ G_{\xi,\alpha,\nu}^{-1}
\end{equation}
satisfy the assumptions (B1) to (B3). 
\end{lemma}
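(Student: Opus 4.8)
The plan is to verify each of the three properties (B1)--(B3) for the transformed coefficients $\widetilde\mu$ and $\widetilde\sigma$, exploiting that $G:=G_{\xi,\alpha,\nu}$ is a bi-Lipschitz $C^1$-diffeomorphism whose derivative is bounded away from $0$ and $\infty$ and equals $1$ outside a compact set (Lemma~\ref{lemx1}(i)). The key observation is that composition with the Lipschitz map $G^{-1}$ preserves Lipschitz continuity, and that a product of bounded Lipschitz functions is again bounded Lipschitz; so for (B1) it suffices to check that the ``pre-composition'' functions $g_\mu:=G'\cdot\mu+\tfrac12 G''\cdot\sigma^2$ and $g_\sigma:=G'\cdot\sigma$ are bounded and Lipschitz continuous on $\R$. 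For $g_\sigma$ this is immediate: $\sigma$ is globally Lipschitz by (A2), hence of linear growth, $G'$ is bounded and Lipschitz, and outside a compact set $G'\equiv 1$ so $g_\sigma=\sigma$ there, which kills the linear-growth issue; on the compact set the product of two Lipschitz functions is Lipschitz. For $g_\mu$ the point is that the potential jump of $G''$ at $\xi_i$ has been chosen precisely to cancel the jump of $\mu$: by (A1) the one-sided limits $\mu(\xi_i\pm)$ exist, and by Lemma~\ref{lemx1}(iii) together with the prescribed value $G''(\xi_i)=2\alpha_i+2(\mu(\xi_i+)-\mu(\xi_i))/\sigma^2(\xi_i)$, one computes the one-sided limits of $g_\mu$ at $\xi_i$ and finds they agree — this is where the exact formula for $\alpha_i$ enters. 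Hence $g_\mu$ is continuous; being piecewise Lipschitz (each factor is Lipschitz on each $(\xi_{i-1},\xi_i)$ by (A1), (A2), Lemma~\ref{lemx1}(i)--(ii), and $G'',\sigma^2$ are bounded) and continuous, it is globally Lipschitz; boundedness follows as for $g_\sigma$ since outside a compact set $G'\equiv 1$, $G''\equiv 0$, so $g_\mu=\mu$, which is Lipschitz hence of linear growth — but composed with the bounded map $G^{-1}$ this still only gives linear growth in $x$, which is fine because (B1) only asks for Lipschitz continuity, not boundedness. Let me re-examine: (B1) asks $\widetilde\mu,\widetilde\sigma$ Lipschitz on $\R$; since $G^{-1}$ is Lipschitz and $g_\mu,g_\sigma$ are Lipschitz, the compositions are Lipschitz, done.

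For (B2) I would take the discontinuity sets of the derivatives of $\widetilde\mu$ and $\widetilde\sigma$ to be the images under $G$ of the original breakpoints: set $\widetilde\xi_i=G(\xi_i)$ and $\widetilde\eta_i=G(\xi_i)$ (so $k_{\widetilde\mu}=k_{\widetilde\sigma}=k$, or fewer if some derivative happens to extend continuously). Because $G$ is an increasing diffeomorphism these are still ordered with the right $\pm\infty$ endpoints. On each open interval $(\widetilde\xi_{i-1},\widetilde\xi_i)=G((\xi_{i-1},\xi_i))$ I differentiate the composition: writing $\widetilde\mu=g_\mu\circ G^{-1}$, the chain rule gives $\widetilde\mu'=(g_\mu'/G')\circ G^{-1}\cdot(\text{something})$ — more precisely $\widetilde\mu'(y)=g_\mu'(G^{-1}(y))\cdot (G^{-1})'(y)$ with $(G^{-1})'(y)=1/G'(G^{-1}(y))$. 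On $(\xi_{i-1},\xi_i)$, $g_\mu'$ exists and is a sum of products of the factors $G',\mu,G'',\sigma$ and their derivatives; by (A3) $\mu',\sigma'$ are Lipschitz there, by Lemma~\ref{lemx1}(ii) $G'',G'''$ are Lipschitz there, and $G'$ is bounded below — so $g_\mu'/G'$, i.e. the factor defining $\widetilde\mu'$, is a product/quotient of Lipschitz functions with denominator bounded away from zero, hence Lipschitz on each interval; composing with the Lipschitz map $G^{-1}$ keeps it Lipschitz on $(\widetilde\xi_{i-1},\widetilde\xi_i)$. The same argument with $g_\sigma=G'\cdot\sigma$ (only first derivatives of $G'$ and $\sigma$ occur, both Lipschitz on the interval) handles $\widetilde\sigma$. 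I should double-check boundedness of the relevant derivatives on the unbounded end intervals, but there $G'\equiv 1$, $G''\equiv G'''\equiv 0$, so $\widetilde\mu'=\mu'$, $\widetilde\sigma'=\sigma'$, which are bounded (Lipschitz $\mu,\sigma$), giving Lipschitz derivatives on those intervals too.

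For (B3) I must check $\widetilde\sigma(\widetilde\xi_i)\ne 0$ and $\widetilde\sigma(\widetilde\eta_i)\ne 0$ for the relevant $i$; since $\widetilde\xi_i=\widetilde\eta_i=G(\xi_i)$, in both cases this reads $\widetilde\sigma(G(\xi_i))\ne 0$, i.e. $g_\sigma(\xi_i)=G'(\xi_i)\sigma(\xi_i)\ne 0$. But $G'(\xi_i)=1$ by Lemma~\ref{lemx1}(i) and $\sigma(\xi_i)\ne 0$ by (A2), so $\widetilde\sigma(G(\xi_i))=\sigma(\xi_i)\ne 0$. This is the easiest part. I expect the main obstacle to be the bookkeeping in (B1) and (B2): verifying continuity of $g_\mu$ at the breakpoints requires carefully computing one-sided limits using Lemma~\ref{lemx1}(iii) and the precise definitions of $\alpha_i$ and of $G''(\xi_i)$, and then assembling the piecewise-Lipschitz-plus-continuous argument into global Lipschitz continuity; the derivative computation in (B2) involves several terms and one must track that no term blows up where $\phi$ or its derivatives are non-smooth — but $\phi$ is $C^3$ (indeed a polynomial times an indicator, matched to order $3$ at $\pm1$) by construction in \eqref{phi}, so this is routine once organized. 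A clean way to organize all of it is to record once and for all that $G',G'',G''',\mu,\sigma$ restricted to each $(\xi_{i-1},\xi_i)$ are bounded and Lipschitz, that $\inf G'>0$, and that the class of bounded Lipschitz functions on an interval is closed under sums, products, and division by a function bounded away from zero, and under precomposition with a Lipschitz map; then each of (B1)--(B3) follows by inspection.
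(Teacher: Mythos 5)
Your proposal is correct and follows essentially the same route as the paper: show the pre-composed functions $G'\mu+\tfrac12 G''\sigma^2$ and $G'\sigma$ are Lipschitz by combining piecewise Lipschitz continuity with the cancellation of the jump of $\mu$ at each $\xi_i$ built into the choice of $\alpha_i$ and the prescribed value $G''(\xi_i)$, handle the linear-growth issue on unbounded intervals via $G'\equiv 1$, $G''\equiv G'''\equiv 0$ outside a compact set (this is exactly the paper's Lemma~\ref{mult}), differentiate via the chain rule on each subinterval for (B2), and use $G'(\xi_i)=1$, $\sigma(\xi_i)\neq 0$ for (B3). The only slip is the closing organizational remark that $\mu,\sigma$ are \emph{bounded} on each $(\xi_{i-1},\xi_i)$ — false on the two unbounded end intervals — but your earlier argument already treats those intervals correctly, so nothing essential is missing; note also that $G_{\xi,\alpha,\nu}(\xi_i)=\xi_i$, so your breakpoints $G(\xi_i)$ are in fact the original $\xi_i$, as used in the paper.
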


We turn to the transformation of the SDE~\eqref{sde0}. Take $\xi,\alpha,\nu$ as in Lemma~\ref{transform1} and define a stochastic process $Z\colon [0,1]\times \Omega\to \R$ by  
\begin{equation}\label{tr}
Z_t = G_{\xi,\alpha,\nu}(X_t),\quad t\in [0,1].
\end{equation}

\begin{lemma}\label{Ito} 
Assume  (A1) to (A3). Then the process $Z$ given by~\eqref{tr} is the unique strong solution of the SDE
\begin{equation}\label{sde1}
\begin{aligned}
dZ_t & = \widetilde\mu(Z_t) \, dt + \widetilde\sigma(Z_t) \, dW_t, \quad t\in [0,1],\\
Z_0 & = G_{\xi,\alpha,\nu}(x_0)
\end{aligned}
\end{equation}
with $\widetilde \mu$ and $\widetilde \sigma$ given by~\eqref{tildecoeff}.
\end{lemma}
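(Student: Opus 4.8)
The plan is to verify that $Z = G_{\xi,\alpha,\nu}(X)$ solves~\eqref{sde1} by a direct application of the It\^o formula, and then to invoke uniqueness. The technical point that must be handled first is that $G := G_{\xi,\alpha,\nu}$ is only piecewise $C^2$ — by Lemma~\ref{lemx1}(i)--(iii) it is $C^1$ on all of $\R$ with Lipschitz derivative, but $G''$ has jumps at the points $\xi_1,\dots,\xi_k$, where $G''(\xi_i-) = -2\alpha_i$ and $G''(\xi_i+) = 2\alpha_i$. Since the classical It\^o formula requires a $C^2$ function, I would instead use a version of the It\^o--Tanaka / generalized It\^o formula valid for functions that are differences of convex functions (equivalently, $C^1$ with a derivative of bounded variation on compacts), which is legitimate here because $G'$ is Lipschitz, hence absolutely continuous with an $L^\infty$ (indeed piecewise Lipschitz) density $G''$ off the finite exceptional set. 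The generalized formula gives
\[
dZ_t = G'(X_t)\,dX_t + \tfrac12 \int_{\R} L_t^a(X)\, G''(da),
\]
and the crucial observation is that the singular part of the second-derivative measure $G''(da)$ is absent: although $G''$ jumps at each $\xi_i$, there is no atom (no Dirac mass) in $G''(da)$ because $G'$ is continuous across $\xi_i$. Hence $G''(da) = g(a)\,da$ where $g$ agrees with the classical $G''$ on $\cup_i(\xi_{i-1},\xi_i)$ and is assigned the value $G''(\xi_i) = 2\alpha_i + 2\frac{\mu(\xi_i+)-\mu(\xi_i)}{\sigma^2(\xi_i)}$ at the finitely many exceptional points (the particular value at those Lebesgue-null points is irrelevant for the integral but matches the extension fixed in Lemma~\ref{transform1}). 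Using the occupation-times formula $\int_\R L_t^a(X) g(a)\,da = \int_0^t g(X_s)\sigma^2(X_s)\,ds$ then yields the standard-looking expression $dZ_t = G'(X_t)\mu(X_t)\,dt + \tfrac12 G''(X_t)\sigma^2(X_t)\,dt + G'(X_t)\sigma(X_t)\,dW_t$.

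Next I would rewrite the coefficients in terms of $Z$. Since $G$ is a bijection with inverse $G^{-1}$ (Lemma~\ref{lemx1}(i)), we have $X_t = G^{-1}(Z_t)$, so the drift becomes $\bigl(G'\cdot\mu + \tfrac12 G''\cdot\sigma^2\bigr)\circ G^{-1}(Z_t) = \widetilde\mu(Z_t)$ and the diffusion becomes $(G'\cdot\sigma)\circ G^{-1}(Z_t) = \widetilde\sigma(Z_t)$, exactly the functions defined in~\eqref{tildecoeff}. This shows $Z$ is a strong solution of~\eqref{sde1} with initial value $G(x_0)$, and it is adapted since $X$ is and $G$ is continuous. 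Finally, Lemma~\ref{transform1} asserts that $\widetilde\mu$ and $\widetilde\sigma$ satisfy (B1)--(B3); in particular $\widetilde\mu,\widetilde\sigma$ are Lipschitz continuous on $\R$, so~\eqref{sde1} has a unique strong solution by the classical Lipschitz existence-and-uniqueness theorem. Therefore $Z$ is the unique strong solution, as claimed.

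I expect the main obstacle to be the careful justification of the generalized It\^o formula and, specifically, the claim that $G''(da)$ has no singular (atomic) component at the $\xi_i$. This is where one must be precise: one should argue either (a) via the It\^o--Tanaka formula for $G' \in BV_{\mathrm{loc}}$ whose distributional derivative is the absolutely continuous measure $g(a)\,da$ (no jump part, because $G'$ itself is continuous), or (b) by a direct mollification argument — approximate $G$ by $C^2$ functions $G_m$ with $G_m \to G$, $G_m' \to G'$ uniformly on compacts and $G_m'' \to g$ boundedly and a.e. — then pass to the limit in the classical It\^o formula applied to each $G_m(X_t)$, using~\eqref{mom} and dominated convergence for the $dt$-integral and an $L^2$ (or $L^p$) isometry estimate for the stochastic integral. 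Approach (b) is arguably the cleanest to write out and avoids invoking local times explicitly. The only other mild subtlety is confirming the moment integrability needed to pass to the limit, but this follows from~\eqref{mom} together with the linear growth of $G'\mu + \tfrac12 g\,\sigma^2$ and $G'\sigma$ (which in turn follows from the Lipschitz continuity of $\mu,\sigma$ established under (A1)--(A3) after noting $G' \equiv 1$ and $g$ is bounded outside a compact set by Lemma~\ref{lemx1}(i)).
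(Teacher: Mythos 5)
Your proposal is correct and follows essentially the same route as the paper: the paper's proof simply notes that $G'_{\xi,\alpha,\nu}$ is absolutely continuous (Lemma~\ref{lemx1}(i)), applies the generalized It\^o formula (the version for $C^1$ functions with absolutely continuous derivative) to get that $Z$ solves~\eqref{sde1}, and concludes uniqueness from the Lipschitz continuity of $\widetilde\mu,\widetilde\sigma$ established in Lemma~\ref{transform1}. Your write-up merely spells out the justification of that It\^o step (no atoms in the second-derivative measure, occupation-time formula, or alternatively mollification), which is fine but not a different argument.
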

\begin{proof}
Lemma~\ref{lemx1}(i) implies that $G'_{\xi,\alpha,\nu}$ is absolutely continuous. We  therefore may apply  It\^{o}'s lemma
 with $G_{\xi,\alpha,\nu}$ to obtain that $Z$ is a solution of~\eqref{sde1}. According to Lemma~\ref{transform1}, $\widetilde \mu$ and $\widetilde \sigma$ are Lipschitz continuous,
which implies that the solution of~\eqref{sde1} is unique.
\end{proof}

\begin{rem}  The construction of the transformations $G_{z,\alpha,\nu}$ used here   is similar to the construction of the transformations used in~\cite{LS15b} and~\cite{MGY18a}. In the latter works the transformations are also given by \eqref{fct1}, but with $\phi\colon\R\to\R$ defined by
\begin{equation}\label{phi2}
\phi(x)=(1-x^2)^3\cdot \ind_{[-1, 1]}(x)
\end{equation}
in place of~\eqref{phi}.
Note that using~\eqref{phi2} in place of~\eqref{phi}, the functions $G_{z,\alpha,\nu}''$ may  not be differentiable at the points $z_i\pm\nu$ for $i\in\{1, \ldots, k\}$, and therefore  $\widetilde\mu$ may not be differentiable at the points $\xi_i\pm\nu$ for $i\in\{1, \ldots, k\}$. 

\end{rem}

For every $n\in\N$ we use $\eultr_{n}=(\eultr_{n,t})_{t\in[0,1]}$ to denote the 
    time-continuous
quasi-Milstein  scheme  with step-size $1/n$ associated to the SDE \eqref{sde1},  
   see Section~\ref{QM}.
Thus, 
$\eultr_{n,0}=G_{\xi,\alpha,\nu}(x_0)$ and 
\begin{align*}
\eultr_{n,t}=\eultr_{n,i/n}&+\widetilde \mu(\eultr_{n,i/n})\cdot (t-i/n)+\widetilde\sigma(\eultr_{n,i/n})\cdot (W_t-W_{i/n})\\
&+\frac{1}{2}\widetilde\sigma\,\delta_{\widetilde \sigma} (\eultr_{n,i/n})\cdot\bigl((W_t-W_{i/n})^2-(t-i/n)\bigr)
\end{align*}
for $t\in (i/n,(i+1)/n]$ 
and $i\in\{0,\ldots,n-1\}$.

 We have the following error estimates for $G_{\xi,\alpha,\nu}^{-1}\circ \eultr_{n}= (G_{\xi,\alpha,\nu}^{-1}(\eultr_{n,t}))_{t\in[0,1]}$.

\begin{theorem}\label{Thm2} 
Assume (A1) to (A3) and let $p\in [1,\infty)$. Then
there exists $c\in(0, \infty)$ such that for all $n\in\N$, 
\begin{equation}\label{l3}
\EE\bigl[\|X-G_{\xi,\alpha,\nu}^{-1}\circ \eultr_{n}\|_\infty^p\bigr]^{1/p}\leq \frac{c}{ n^{3/4}}. 
\end{equation}
\end{theorem}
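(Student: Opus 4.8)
The plan is to derive Theorem~\ref{Thm2} directly from Theorem~\ref{Thm1} by means of the transformation machinery already set up in Lemmas~\ref{lemx1}, \ref{transform1} and~\ref{Ito}. First I would fix $\xi$, $\alpha$, $\nu$ as in Lemma~\ref{transform1} and write $G=G_{\xi,\alpha,\nu}$ for brevity. By Lemma~\ref{Ito} the process $Z=G\circ X$ is the unique strong solution of the SDE~\eqref{sde1} with coefficients $\widetilde\mu$, $\widetilde\sigma$, and by Lemma~\ref{transform1} these coefficients satisfy the assumptions (B1) to (B3). Hence Theorem~\ref{Thm1} applies to the pair $(Z,\eultr_n)$: for every $p\in[1,\infty)$ there is $c\in(0,\infty)$ with $\EE[\|Z-\eultr_n\|_\infty^p]^{1/p}\le c/n^{3/4}$ for all $n\in\N$. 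Note one has to check that $\eultr_n$ as defined in Section~\ref{threefour} is exactly the time-continuous quasi-Milstein scheme associated with~\eqref{sde1} in the sense of Section~\ref{QM}; this is immediate from the defining recursion, since $\delta_{\widetilde\sigma}$ is the function $\delta$ of Section~\ref{Not} applied to $\widetilde\sigma$.

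Next I would transfer the estimate back through $G^{-1}$. By Lemma~\ref{lemx1}(i) the inverse $G^{-1}\colon\R\to\R$ is Lipschitz continuous; let $L$ denote its Lipschitz constant. Then pointwise in $\omega$ and uniformly in $t\in[0,1]$,
\begin{equation*}
|X_t-G^{-1}(\eultr_{n,t})| = |G^{-1}(Z_t)-G^{-1}(\eultr_{n,t})| \le L\,|Z_t-\eultr_{n,t}|,
\end{equation*}
so that $\|X-G^{-1}\circ\eultr_n\|_\infty\le L\,\|Z-\eultr_n\|_\infty$. Taking $p$-th moments and applying the bound from the previous paragraph gives
\begin{equation*}
\EE\bigl[\|X-G^{-1}\circ\eultr_n\|_\infty^p\bigr]^{1/p} \le L\,\EE\bigl[\|Z-\eultr_n\|_\infty^p\bigr]^{1/p} \le \frac{L\,c}{n^{3/4}},
\end{equation*}
which is~\eqref{l3} with constant $Lc$.

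In this argument there is essentially no genuine obstacle left: all the substantive work has been front-loaded into Theorem~\ref{Thm1} (the $3/4$ rate for the quasi-Milstein scheme under (B1)--(B3)) and into Lemmas~\ref{transform1} and~\ref{Ito} (that $G$ produces an SDE of the required type with the quasi-Milstein scheme transforming correctly). The only points demanding a little care are bookkeeping ones: verifying that the choice $\nu\in(0,\rho_{\xi,\alpha})$ is admissible so that Lemma~\ref{transform1} is applicable (in particular that $\alpha_i$ is well defined, which uses $\sigma(\xi_i)\ne0$ from (A2)), and confirming that the scheme $\eultr_n$ written down in Section~\ref{threefour} literally matches the object $\widehat X_n$ of Theorem~\ref{Thm1} when that theorem is instantiated with $\widetilde\mu,\widetilde\sigma$ and initial value $G(x_0)$. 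Once those identifications are made, the proof is the two-line Lipschitz transfer above. If anything could be called the ``hard part'', it is purely that one must have already established Theorem~\ref{Thm1}; the present statement is a clean corollary of it.
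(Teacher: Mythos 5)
Your proposal is correct and follows essentially the same route as the paper: apply Lemma~\ref{Ito} and Lemma~\ref{transform1} to see that $Z=G_{\xi,\alpha,\nu}\circ X$ solves an SDE whose coefficients satisfy (B1) to (B3), invoke Theorem~\ref{Thm1} for $\EE[\|Z-\eultr_n\|_\infty^p]^{1/p}\le c/n^{3/4}$, and transfer the bound via the Lipschitz continuity of $G_{\xi,\alpha,\nu}^{-1}$ from Lemma~\ref{lemx1}(i). No gaps; this matches the paper's two-line argument.
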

\begin{proof}
Using the Lipschitz continuity of $G_{\xi,\alpha,\nu}^{-1}$, 
see Lemma~\ref{transform1}(i),
 the fact that $\widetilde \mu$ and $\widetilde \sigma$ satisfy the assumptions (B1) to (B3) and Theorem~\ref{Thm1} we obtain that there exist $c_1, c_2\in (0, \infty)$ such that for all $n\in\N$,
\begin{align*}
\EE\bigl[\|X-G_{\xi,\alpha,\nu}^{-1}\circ \eultr_{n}\|_\infty^p\bigr]^{1/p}\leq c_1\cdot \EE\bigl[\|Z- \eultr_{n}\|_\infty^p\bigr]^{1/p}\leq \frac{c_2}{ n^{3/4}},
\end{align*}
which completes the proof of the theorem.
\end{proof}

\section{Discussion of the error bounds in Theorems~\ref{Thm1} and~\ref{Thm2}}\label{disc}

It is well known that, in general, the upper error bound~\eqref{ll33} in Theorem~\ref{Thm1} can not be improved by any method that is based on $n$ evaluations of the driving Brownian motion $W$,
see~\cite{hhmg2019,m04} for results on matching lower error bounds.

We believe that an analogue statement holds true with respect to the upper error bound~\eqref{ll3} in Theorem~\ref{Thm1}. In particular, we conjecture that the following statement is true:

\begin{conj}\label{Conj1}
There exist $x_0\in\R$, functions $\mu,\sigma\colon\R\to\R$ that satisfy (B1) to (B3) 
and $c\in (0,\infty)$  such that the  solution $X$ of the corresponding SDE~\eqref{sde01} satisfies for every $n\in\N$,
\begin{equation}\label{conj1}
\inf_{\substack{t_1,\dots,t_n\in[0,1]  \\ \psi\colon\R^n\to \R\text{ measurable}}} \EE\bigl[|X_1-\psi(W_{t_1},\dots,W_{t_n})|\bigr] \ge \frac{c}{ n^{3/4}}. 
\end{equation} 
\end{conj}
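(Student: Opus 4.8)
The plan is to prove the lower bound in Conjecture~\ref{Conj1} by exhibiting a concrete SDE for which the quasi-Milstein rate $3/4$ is tight, and the natural candidate is the SDE~\eqref{exx22}, or more precisely the transformed-type equation whose diffusion coefficient genuinely fails to be differentiable at one point. First I would pass, via the bi-Lipschitz transformation $G_{\xi,\alpha,\nu}$ of Section~\ref{threefour} (run in reverse), to an equivalent formulation in which it suffices to lower-bound the approximation error for the solution at time $1$ of an SDE with coefficients satisfying (B1)--(B3) but with $k_\sigma\ge 1$, since $X_1$ and $Z_1=G_{\xi,\alpha,\nu}(X_1)$ are related by a bi-Lipschitz map and the information (the values $W_{t_1},\dots,W_{t_n}$) is unchanged. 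The point of having $k_\sigma\ge 1$ is that the second-order Milstein correction $\frac12\sigma\delta_\sigma$ carries a jump in $\sigma'$ across $\eta_1$, and it is exactly this jump that obstructs order $1$ and should produce the matching $3/4$ lower bound.

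The core of the argument is a hard-analysis lower bound on the best $L_1$-approximation of $X_1$ by measurable functions of the $n$ observations. The standard route is to reduce to a two-point (or Gaussian-conditioning) argument: fix the ``optimal'' observation times $t_1<\dots<t_n$, and on the largest gap interval $[t_j,t_{j+1}]$ of length $\ge 1/(n+1)$, condition on the entire path of $W$ outside that interval together with the endpoint values $W_{t_j},W_{t_{j+1}}$. Conditionally, $W$ restricted to the gap is a Brownian bridge, independent of the observations, and the conditional law of $X_1$ is a functional of that bridge. The key local computation is to show that the conditional variance of $X_1$ given the observations is bounded below by $c\,h^{3/2}$ where $h$ is the gap length: this is where the crossing behaviour of $X$ at the discontinuity point $\eta_1$ of $\sigma'$ enters, because near a crossing the contribution of the unobserved bridge increment to $X_1$ behaves like the jump of $\sigma\sigma'$ times a quadratic-variation-type term, whose fluctuation over an interval of length $h$ is of order $h^{3/4}$ in $L_2$ (the exponent $3/2$ in the variance being the square of $3/4$). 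Since the conditional $L_1$-error of the best predictor is at least a constant times the conditional standard deviation, and since $h\ge 1/(n+1)$, this yields $\EE[|X_1-\psi(W_{t_1},\dots,W_{t_n})|]\ge c/n^{3/4}$ after taking expectations and using $\PP(\text{$X$ crosses $\eta_1$ during the gap})\ge c>0$, which holds because the diffusion coefficient is nonzero there (by (B3)) so the process spends a nontrivial amount of time near $\eta_1$ with positive probability uniformly in $n$.

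The main obstacle I expect is the lower bound on the conditional variance, i.e.\ making rigorous the claim that the jump of $\sigma\sigma'$ at $\eta_1$ forces a genuine $h^{3/2}$ (rather than $h^2$) conditional variance for $X_1$. This requires (a) an Itô--Tanaka / local-time expansion of $X_1$ in terms of the Brownian bridge increment on the gap, isolating the leading term proportional to $(\sigma\sigma')(\eta_1+)-(\sigma\sigma')(\eta_1-)$ times an occupation-time-weighted iterated integral of the bridge against itself; (b) a careful conditioning argument to show this leading term is not cancelled by the rest of the conditional expansion — here one must control the dependence of $X$ on the Itô and off-gap pieces and show the ``bad'' term survives conditioning, which is delicate because $X$ depends on the bridge nonlinearly; and (c) a non-degeneracy argument that the relevant occupation-time functional is bounded away from zero on an event of probability bounded below uniformly in $n$. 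A cleaner alternative, which I would pursue in parallel, is to first establish the lower bound for the exactly solvable linear test equation $dX_t = dW_t$ replaced by an explicit scalar equation whose solution has an $\eta_1$-crossing structure that can be computed in closed form (e.g.\ piecing together two geometric Brownian motions across $0$), reducing (a)--(c) to an explicit Gaussian computation; one would then argue by a perturbation/localization argument that the general (B1)--(B3) case inherits the same lower order. I would defer the full details to future work, as flagged in the paper after Conjecture~\ref{Conj1}.
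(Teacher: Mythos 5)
You should first be aware that the paper contains no proof of this statement: it is posed as Conjecture~\ref{Conj1} and explicitly deferred to future work, so there is nothing to match your argument against, and your text is in any case a research plan rather than a proof — you yourself defer the decisive steps (the conditional variance bound, the non-cancellation under conditioning, the occupation-time non-degeneracy). Two smaller points of hygiene: the "natural candidate" you name, the SDE~\eqref{exx22}, has $k_\sigma=0$ and is approximated at rate $1$ by Theorem~\ref{introthm2}, so it cannot witness the conjecture; you want an equation such as~\eqref{exx2} with a genuine kink of $\sigma$ at a point where $\sigma\neq 0$. Moreover, Example~\ref{ex2} in the paper shows that apparent irregularity can be removable by a smooth change of variables (there a drift discontinuity is transformed away and rate $1$ is attained), and the Lamperti-type transform converts a kink of $\sigma$ into a drift discontinuity; hence part of the difficulty of the conjecture is precisely to choose coefficients and prove that no such transformation restores order $1$ — your sketch does not address this.

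The substantive gap is quantitative and sits at the heart of your plan: the largest-gap conditioning argument cannot produce the rate $n^{-3/4}$. For a gap $[t_j,t_{j+1}]$ of length $h\asymp 1/n$ in the interior of $[0,1]$, the probability that $X$ crosses the kink point $\eta_1$ during that gap is of order $\sqrt{h}$ (it essentially requires $|X_{t_j}-\eta_1|\lesssim \sqrt{h}$, and $X_{t_j}$ has a bounded density), not bounded below by a constant as you assert. On the crossing event the part of the increment of $X$ over the gap that cannot be predicted from the endpoint data is of size $O(\sqrt{h})$ in the integrand and acts over a time window of length $O(h)$, so the conditional variance contributed by a single gap is of order $h\cdot h\cdot \sqrt{h}=h^{5/2}$, not $h^{3/2}$; conditioning on the whole path of $W$ outside the gap, as you propose, therefore yields at best a lower bound of order $h^{5/4}\asymp n^{-5/4}$ (and the $L_1$ version, expected conditional standard deviation, is even smaller, of order $n^{-3/2}$). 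The exponent $3/4$ only emerges after aggregating the (roughly independent) contributions of all $n$ subintervals, each weighted by the $O(\sqrt{h})$ probability of being near $\eta_1$: total variance $\asymp n\cdot h^{5/2}\asymp n^{-3/2}$. So the reduction to a single Brownian bridge discards exactly the accumulation effect that produces the conjectured rate, and any correct argument must instead control the conditional law of $X_1$ given \emph{all} $n$ observations simultaneously and show that the $n$ small unpredictable pieces do not cancel — which is the genuinely open difficulty, not a detail to be deferred.
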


We furthermore believe that in general the upper error
bound~\eqref{l3} 
in Theorem~\ref{Thm2} can not be improved by any method that is based on $n$  evaluations of the driving Brownian motion $W$. In particular, we conjecture that the following statement is true:

\begin{conj}\label{Conj2}
There exist $x_0\in\R$, functions $\mu,\sigma\colon\R\to\R$ that satisfy (A1) to (A3)  and $c\in (0,\infty)$  such that the solution $X$ of the corresponding  SDE~\eqref{sde0} satisfies for every $n\in\N$,
\begin{equation}\label{conj2}
\inf_{\substack{t_1,\dots,t_n\in[0,1]  \\ \psi\colon\R^n\to \R\text{ measurable}}} \EE\bigl[|X_1-\psi(W_{t_1},\dots,W_{t_n})|\bigr] \ge \frac{c}{ n^{3/4}}. 
\end{equation}
\end{conj}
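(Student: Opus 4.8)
The plan is to exhibit a single explicit SDE satisfying (A1)--(A3) for which no approximation based on $n$ values of $W$ beats the rate $n^{-3/4}$. I would take $\sigma\equiv1$, $\mu=-\sgn$ and $x_0=0$, i.e.
\[
dX_t=-\sgn(X_t)\,dt+dW_t,\quad t\in[0,1],\qquad X_0=0 ,
\]
which satisfies (A1)--(A3) with $k=1$, $\xi_1=0$; here $\mu$ is bounded and non-increasing, and the transition densities of $X$ are bounded above on $[0,1]$ and bounded below near the origin, uniformly in $t\in(0,1]$, so in particular $\PP(|X_t|\le r)\ge c\,r$ for all $t\in[0,1]$ and $r\in(0,1]$. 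Fix $t_1<\dots<t_n$ in $[0,1]$; we may assume $t_n=1$ (passing from $n$ to $n+1$ points does not affect the rate), put $t_0=0$, $h_i=t_i-t_{i-1}$ and $\mathcal G=\sigma(W_{t_1},\dots,W_{t_n})$. Let $W'$ agree with $W$ at $t_0,\dots,t_n$ but, on each $(t_{i-1},t_i)$, use an independent copy of the corresponding Brownian bridge, and let $X'$ solve the SDE driven by $W'$. Then, conditionally on $\mathcal G$, $X'_1$ is an independent copy of $X_1$, so $\EE\bigl[|X_1-\psi(W_{t_1},\dots,W_{t_n})|\bigr]\ge\tfrac12\,\EE\bigl[|X_1-X'_1|\bigr]$ for every measurable $\psi$. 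Since $\sigma\equiv1$ and $W$, $W'$ agree at the grid points, $X_{t_i}-X'_{t_i}=\sum_{j\le i}\int_{t_{j-1}}^{t_j}\bigl(\mu(X_s)-\mu(X'_s)\bigr)\,ds$; writing $D_i=X_{t_i}-X'_{t_i}$ and $\Delta_i$ for the $i$-th summand, we get $D_0=0$, $D_i=D_{i-1}+\Delta_i$ and $X_1-X'_1=D_n$. It therefore suffices to prove $\EE[|D_n|]\ge c\,n^{-3/4}$.

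I would first dispose of grids with $\max_i h_i\ge\delta$ (for a fixed small $\delta>0$) by a simpler variant of the argument below, which there already gives $\EE[|D_n|]\gtrsim\delta$; so assume $\max_i h_i\le\delta$. Let $\mathcal H_i=\sigma(W_s,W'_s:s\le t_i)$ and call the $i$-th interval \emph{active} (event $A_i$) if $|X_{t_{i-1}}|\le\sqrt{h_i}$. The structural facts I would establish from occupation-time and transition-density estimates for $X$ and for the coupled pair $(X,X')$ on a subinterval straddling the discontinuity --- uniform in the subinterval length $h\le\delta$ and in the bounded starting configuration --- are: since $\mu$ is non-increasing, $X$ and $X'$ stay close and there is a mean-reverting conditional drift $\EE[\Delta_i\mid\mathcal H_{i-1}]=-c_i D_{i-1}+(\text{lower order})$ with $0\le c_i\le C\sqrt{h_i}$; on $A_i$ one has $\EE[\Delta_i^2\mid\mathcal H_{i-1}]\gtrsim h_i^2$, while off $A_i$ the contribution of $\Delta_i$ is negligible; and $\EE\bigl[\sum_{l>i}\sqrt{h_l}\,\mathbf 1_{A_l}\,\big|\,\mathcal F_{t_i}\bigr]\le C'$ uniformly. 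Combining $\PP(A_i)\ge c\sqrt{h_i}$ with the power-mean inequality $\sum_i h_i^{5/2}\ge n^{-3/2}$ (valid since $\sum_i h_i=1$ and there are $n$ terms) gives $\sum_i\EE\bigl[\EE[\Delta_i^2\mid\mathcal H_{i-1}]\,\mathbf 1_{A_i}\bigr]\gtrsim n^{-3/2}$.

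From $\EE[D_i^2\mid\mathcal H_{i-1}]=D_{i-1}^2+2D_{i-1}\EE[\Delta_i\mid\mathcal H_{i-1}]+\EE[\Delta_i^2\mid\mathcal H_{i-1}]\ge D_{i-1}^2\bigl(1-C\sqrt{h_i}\,\mathbf 1_{A_i}\bigr)+\EE[\Delta_i^2\mid\mathcal H_{i-1}]$ (shrink $\delta$ so the bracket is positive), unrolling the recursion and bounding the accumulated damping $\prod_{l>i}\bigl(1-C\sqrt{h_l}\,\mathbf 1_{A_l}\bigr)\ge\exp\bigl(-2C\sum_{l>i}\sqrt{h_l}\,\mathbf 1_{A_l}\bigr)$ below by a positive constant --- using the uniform bound on $\EE[\sum_{l>i}\sqrt{h_l}\mathbf 1_{A_l}\mid\mathcal F_{t_i}]$ and Jensen's inequality conditionally on $\mathcal F_{t_i}$ --- yields $\EE[D_n^2]\gtrsim\sum_i\EE[\EE[\Delta_i^2\mid\mathcal H_{i-1}]\mathbf 1_{A_i}]\gtrsim n^{-3/2}$. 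A parallel (more involved) fourth-moment recursion gives the hypercontractivity-type bound $\EE[D_n^4]\le C\,(\EE[D_n^2])^2$. The two bounds and the Paley--Zygmund inequality give $\PP\bigl(D_n^2\ge\tfrac12\EE[D_n^2]\bigr)\ge c$, whence $\EE[|D_n|]\ge c\,\bigl(\tfrac12\EE[D_n^2]\bigr)^{1/2}\gtrsim n^{-3/4}$, proving the claim.

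The main obstacle is the variance lower bound $\EE[D_n^2]\gtrsim n^{-3/2}$ --- equivalently, that $\operatorname{Var}\bigl(\int_0^1\mu(X_s)\,ds\mid\mathcal G\bigr)\gtrsim n^{-3/2}$ with probability bounded away from $0$. One has to show that the Markovian, mean-reverting structure of $D$ attenuates the accumulation of the $\Theta(\sqrt n)$ essentially independent per-interval fluctuations of size $\Theta(1/n)$ by at most a bounded factor, uniformly over all grids with $\max_i h_i\le\delta$; this is exactly where sharp two-sided occupation-time and transition-density estimates near the discontinuity enter, to pin down the innovation variance $\asymp h^2$, the damping coefficient $c_i\asymp\sqrt{h}$ on active intervals, and the conditional-expectation bound $\EE[\sum_{l>i}\sqrt{h_l}\mathbf 1_{A_l}\mid\mathcal F_{t_i}]=O(1)$. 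By comparison, the small-ball estimate $\PP(|X_t|\le r)\gtrsim r$, the fourth-moment bound, and the reduction to $\max_i h_i\le\delta$ are more routine but still require care because of the discontinuity of $\mu$.
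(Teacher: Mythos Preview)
The statement you are attempting to prove is labelled as a \emph{conjecture} in the paper, not a theorem; the authors explicitly write that a study of this conjecture ``will be the subject of future work'' and provide no proof. There is therefore nothing in the paper to compare your argument against.

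As for the proposal itself: it is a sketch, not a proof. The coupling reduction via independent Brownian bridges between the grid points, giving $\EE[|X_1-\psi|]\ge\tfrac12\,\EE[|X_1-X'_1|]$, is standard and correct, and the choice $\mu=-\sgn$, $\sigma\equiv1$, $x_0=0$ is a natural candidate. But the core of the argument --- the variance lower bound $\EE[D_n^2]\gtrsim n^{-3/2}$ --- rests on several structural claims that you assert without proof and yourself flag as the main obstacle. In particular, the linearized conditional-drift relation $\EE[\Delta_i\mid\mathcal H_{i-1}]=-c_iD_{i-1}+(\text{lower order})$ with $0\le c_i\lesssim\sqrt{h_i}$ is a heuristic: for $\mu=-\sgn$ the integrand $\mu(X_s)-\mu(X'_s)$ takes values in $\{-2,0,2\}$ according to whether $X_s$ and $X'_s$ straddle zero, and converting this into a damping term proportional to $D_{i-1}$ requires a genuine occupation-time analysis of the coupled pair near the discontinuity, which you have not supplied. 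Likewise the innovation lower bound $\EE[\Delta_i^2\mid\mathcal H_{i-1}]\gtrsim h_i^2$ on active intervals and the fourth-moment hypercontractivity $\EE[D_n^4]\le C\,(\EE[D_n^2])^2$ are stated but not argued. These are the substantive parts of the problem, not routine checks; as written, the proposal outlines a plausible strategy but does not constitute a proof.
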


Note that the assumptions (B1) to (B3) are stronger than the assumptions (A1) to (A3). Thus, if the Conjecture~\ref{Conj1} is true then the Conjecture~\ref{Conj2} is true as well.

On the other hand side, the following example shows that the lower bound~\eqref{conj2}
does not hold true for all choices of the coefficients $\mu,\sigma\colon\R\to\R$ that satisfy (A1) to (A3) such that $\mu$ is discontinuous.

\begin{ex}\label{ex2} Let $x_0=0$, take $k=1$, $z = 0$, $\alpha=-1/2$ and $\nu\in (0,1/4)$ in~\eqref{fct1},  and consider the functions $\mu,\sigma\colon\R\to\R$ given by
\begin{equation}\label{choice33}
\mu =1_{[0,\infty)},\quad \sigma = 1/G'_{0,-1/2,\nu}.
\end{equation}
Clearly, $\mu$ is Lipschitz continuous and differentiable on  each of the intervals $(-\infty,0)$ and $(0,\infty)$ with Lipschitz continuous derivative $\mu'=0$.  Using Lemma~\ref{lemx1}(i) we see that $\sigma$ is Lipschitz continuous on $\R$. Moreover, by Lemma~\ref{lemx1}(ii) we obtain that on each of the intervals $(-\infty,0)$ and $(0,\infty)$, $\sigma$ is differentiable  with  derivative $\sigma' = -G''_{0,-1/2,\nu}/(G'_{0,-1/2,\nu})^2$. According to Lemma~\ref{lemx1}(i) there exist $c\in (0,\infty)$ such that 
for all $x\in \R$ with $|x|>c$ we have $G''_{0,-1/2,\nu}(x)=0$. Employing Lemma~\ref{mult} in Section~\ref{Lem} we thus conclude that on each of the intervals $(-\infty,0)$ and $(0,\infty)$, $\sigma'$ is Lipschitz continuous. Moreover, by Lemma~\ref{lemx1}(i) we have $\sigma(0) = 1$. Hence, $\mu$ and $\sigma$ satisfy (A1) to (A3) with $k=1$ and $\xi_1=0$. 

Let $X$ denote the solution of~\eqref{sde0} with $\mu$ and $\sigma$ given by~\eqref{choice33}. Since $\sigma(0) = 1$ we obtain by Lemmas~\ref{transform1},~\ref{Ito} that the process $Z = G_{0,-1/2,\nu}\circ X$ is the solution of the SDE~\eqref{sde1} with coefficients $\widetilde \mu,\widetilde \sigma\colon \R\to\R$ that satisfy (B1) to (B3). Note that $ \widetilde \sigma = 1$ and thus one may take $k_{\widetilde\sigma}=0$
 in (B2). Hence, using the second part of Theorem~\ref{Thm1} and the Lipschitz continuity of $G^{-1}_{0,-1/2,\nu}$ we conclude that for every $p\in[1,\infty)$ there exist $c_1,c_2\in (0,\infty)$ such that for every $n\in\N$,
\[
\EE[\|X-G^{-1}_{0,-1/2,\nu}\circ \eultr_{n}\|_\infty^p]^{1/p} \le c_1\cdot \EE[\|Z-\eultr_{n}\|_\infty^p]^{1/p} \le c_2/n.
\]
\end{ex}

\section{Proof of  Theorem~\ref{Thm1}}\label{Proofs}
Throughout this section we 
assume that $\mu,\sigma\colon \R\to \R$ satisfy the 
assumptions
(B1) to (B3). 
Moreover, we put
\[
\utn = \lfloor n\cdot t\rfloor / n
\]
for every $n\in\N$ and every $t\in [0,1]$.

We briefly describe the 
structure
of this section. 
In Section~\ref{4.1} we provide $L_p$-estimates and a Markov property
of the time-continuous quasi-Milstein scheme $\eul_n$. Section~\ref{4.2} contains occupation time estimates for  $\eul_n$, which finally lead to the $p$-th mean estimate
\begin{equation}\label{main}
\max_{\xi\in\{\xi_1,\dots,\xi_{k_\mu}\}\cup\{\eta_1,\dots,\eta_{k_\sigma}\}}\EE\biggl[\Bigl|\int_0^1(\eul_{n,t}-\eul_{n,\utn})^2\cdot 1_{\{(\eul_{n,t} - \xi)(\eul_{n,\utn} - \xi)\le 0\}}\, dt\Bigr|^p\biggr] \le \frac{c}{n^{3p/2}},
\end{equation}
where $c\in (0,\infty)$ does not depend on $n$, see Proposition~\ref{prop1}. The latter result is a crucial tool for the error analysis of the quasi-Milstein scheme. The results in Sections~\ref{4.1} and~\ref{4.2} are then used in Section~\ref{4.3} to derive the error estimates in Theorem~\ref{Thm1}.

Throughout this section
we will employ the following three facts, which are an immediate consequence of the assumptions (B1) to (B3). Namely, 
the functions $\mu$ and $\sigma$ satisfy a linear growth condition, i.e. 
\begin{equation}\label{LG}
\exists\, K\in (0, \infty)\,\forall\, x\in\R\colon\quad |\mu(x)|+|\sigma(x)|\leq K\cdot (1+|x|),
\end{equation}
the functions  $\delta_\mu$ and $\delta_\sigma$ are bounded, i.e. 
\begin{equation}\label{bound}
\|\delta_\mu\|_\infty + \|\delta_\sigma\|_\infty < \infty,
\end{equation}
the function $\mu$
and $\sigma$ satisfy 
\begin{equation}\label{taylor}
\begin{aligned}
& \exists\, c\in(0, \infty)\,\forall\, 
i\in\{1,\dots,k_\mu+1\}\,
\forall x,y\in
 (\xi_{i-1},\xi_{i})\,
\forall\, 
j\in\{1,\dots,k_\sigma+1\}
\,\forall \tilde x,\tilde y\in 
(\eta_{j-1},\eta_{j})
\colon \\
& \qquad \qquad\qquad |\mu(y)-\mu(x)-\mu'(x)(y-x)|  \le c\cdot |y-x|^2\,\text{ and }\\
& \qquad \qquad\qquad |\sigma(\tilde y)-\sigma(\tilde x)-\sigma'(\tilde x)(\tilde y-\tilde x)|  \leq c\cdot |\tilde y-\tilde x|^2.
\end{aligned}
\end{equation}

\subsection{$L_p$-estimates and a Markov property for the time-continuous quasi-Milstein scheme}\label{4.1}

For technical reasons we have to provide $L_p$-estimates and  further properties
of the time-continuous quasi-Milstein scheme for the SDE~\eqref{sde01} dependent on the initial value $x_0$. To be formally precise, for every $x\in\R$ we let 
$X^x$ denote the unique strong solution of the SDE
\begin{equation}\label{sde00}
\begin{aligned}
dX^x_t & = \mu(X^x_t) \, dt + \sigma(X^x_t) \, dW_t, \quad t\in [0,1],\\
X^x_0 & = x,
\end{aligned}
\end{equation}
and for all $x\in\R$ and $n\in\N$ we use $\eul_{n}^x=(\eul_{n,t}^x)_{t\in[0,1]}$ to denote the time-continuous quasi-Milstein scheme with step-size $1/n$ associated to the SDE \eqref{sde00}.
Thus, $X=X^{x_0}$ and $\widehat X_n = \eul_{n}^{x_0}$ for all $n\in\N$, and for all $x\in\R$
\begin{equation}\label{intrep}
\eul_{n,t}^x=x+\int_0^t \mu(\eul_{n,\usn}^x)\, ds+\int_0^t\bigl(\sigma(\eul_{n,\usn}^x)+\sigma\delta_\sigma(\eul_{n,\usn}^x)\cdot (W_s-W_{\usn})\bigr)\,dW_s
\end{equation}
holds $\PP\text{-a.s.}$ for all  $n\in\N$ and $t\in[0,1]$.

We have the following uniform $L_p$-estimates for $\eul_{n}^x$, $n\in\N$,
which follow from \eqref{intrep}, the linear growth property~\eqref{LG} of $\mu$ and $\sigma$ 
and the boundedness of $\delta_\sigma$, see~\eqref{bound}, 
by using standard arguments.

\begin{lemma}\label{eulprop}
Let $p\in[1, \infty)$. Then there exists  $c\in(0, \infty)$ such that for all $x\in\R$, all $n\in\N$, all $\delta\in[0,1]$ and all $t\in[0, 1-\delta]$,
\[
\EE\Bigl[\,\sup_{s\in[t, t+\delta]} |\eul_{n,s}^x-\eul_{n,t}^x|^p\Bigr]^{1/p}\leq c\cdot (1+|x|)\cdot \sqrt{\delta}.
\]
In particular, there exists  $c\in(0, \infty)$ such that for all $x\in\R$ and all $n\in\N$,
\[
\sup_{n\in\N}\, \EE\bigl[\|\eul_{n}^x\|_\infty^p\bigr]^{1/p}\leq c\cdot (1+|x|).
\]
\end{lemma}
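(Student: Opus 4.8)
The plan is to prove the uniform $L_p$-estimates for the time-continuous quasi-Milstein scheme $\eul_n^x$ by a standard Gr\"onwall-type argument applied to the integral representation~\eqref{intrep}, with all constants tracked to be independent of $x$ and $n$. First I would fix $p\in[1,\infty)$, assume without loss of generality that $p\ge 2$ (the case $p\in[1,2)$ follows by Jensen/H\"older from the case $p=2$), and start from
\[
\eul_{n,s}^x-\eul_{n,t}^x=\int_t^s \mu(\eul_{n,\usn}^x)\, du+\int_t^s\bigl(\sigma(\eul_{n,\usn}^x)+\sigma\delta_\sigma(\eul_{n,\usn}^x)\cdot (W_u-W_{\uun})\bigr)\,dW_u .
\]
Taking $\sup_{s\in[t,t+\delta]}$, raising to the $p$-th power, using the elementary inequality $|a+b|^p\le 2^{p-1}(|a|^p+|b|^p)$, and then applying H\"older's inequality to the drift term and the Burkholder--Davis--Gundy inequality to the stochastic integral, I obtain a bound of the form
\[
\EE\Bigl[\sup_{s\in[t,t+\delta]}|\eul_{n,s}^x-\eul_{n,t}^x|^p\Bigr]\le c_p\,\delta^{p-1}\int_t^{t+\delta}\EE\bigl[|\mu(\eul_{n,\uun}^x)|^p\bigr]\,du+c_p\,\delta^{p/2-1}\int_t^{t+\delta}\EE\bigl[\,|\sigma(\eul_{n,\uun}^x)|^p+|\sigma\delta_\sigma(\eul_{n,\uun}^x)|^p\,|W_u-W_{\uun}|^p\,\bigr]\,du,
\]
where I have also used that the second summand of the BDG quadratic-variation term, namely $\delta^{p/2-1}\int_t^{t+\delta}\EE[|\sigma\delta_\sigma(\eul_{n,\uun}^x)(W_u-W_{\uun})|^p]\,du$, is controlled via independence of $W_u-W_{\uun}$ from $\eul_{n,\uun}^x$ together with $\EE[|W_u-W_{\uun}|^p]\le c_p\,(1/n)^{p/2}\le c_p\,\delta^{p/2}$ when $\delta\ge 1/n$ (and $\le c_p\,\delta^{p/2}$ trivially when $\delta<1/n$ since then $\eul^x$ makes at most one step). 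Invoking the linear growth bound~\eqref{LG} on $\mu,\sigma$ and the boundedness~\eqref{bound} of $\delta_\sigma$, and crudely estimating $|\sigma\delta_\sigma(y)|\le \|\delta_\sigma\|_\infty\cdot K(1+|y|)$, every integrand is bounded by a constant times $1+\EE[|\eul_{n,\uun}^x|^p]\le 1+\EE[\sup_{r\in[0,u]}|\eul_{n,r}^x|^p]$.

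Next I would first establish the second, global estimate $\sup_n\EE[\|\eul_n^x\|_\infty^p]^{1/p}\le c(1+|x|)$, since the local estimate with the correct $\sqrt\delta$ scaling then follows by feeding it back in. Writing $m_n(t)=\EE[\sup_{s\in[0,t]}|\eul_{n,s}^x|^p]$, the displayed inequality with $t=0$, $\delta=u$ (and the bound $|\eul_{n,s}^x|^p\le 2^{p-1}(|x|^p+|\eul_{n,s}^x-x|^p)$) yields
\[
m_n(t)\le c_p\bigl(|x|^p+1\bigr)+c_p\int_0^t m_n(u)\,du
\]
for all $t\in[0,1]$, uniformly in $n$; here the finiteness $m_n(t)<\infty$ needed to run Gr\"onwall follows from a routine localization by stopping times $\tau_N=\inf\{s:|\eul_{n,s}^x|\ge N\}$, on which all integrands are bounded. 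Gr\"onwall's lemma then gives $m_n(1)\le c_p(|x|^p+1)e^{c_p}$, i.e. the claimed uniform bound with a constant independent of $x$ and $n$. Finally, plugging this bound into the local inequality shows the integrands there are bounded by a constant times $(1+|x|)^p$, so the drift contribution is $\le c\,\delta^{p-1}\cdot\delta\cdot(1+|x|)^p=c\,\delta^p(1+|x|)^p\le c\,\delta^{p/2}(1+|x|)^p$ (as $\delta\le 1$) and the diffusion contribution is $\le c\,\delta^{p/2-1}\cdot\delta\cdot(1+|x|)^p=c\,\delta^{p/2}(1+|x|)^p$, giving $\EE[\sup_{s\in[t,t+\delta]}|\eul_{n,s}^x-\eul_{n,t}^x|^p]^{1/p}\le c\,(1+|x|)\sqrt\delta$ as desired.

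I do not expect a genuine obstacle here — this is the routine a priori moment estimate for an Euler/Milstein-type scheme — but the two points requiring a little care are: (i) the Milstein correction term $\sigma\delta_\sigma(\eul_{n,\uun}^x)\cdot(W_u-W_{\uun})$, whose $L_p$-norm must be handled using the boundedness of $\delta_\sigma$ together with independence of the Brownian increment from the value of the scheme at the left grid point, so that no extra power of $n$ appears; and (ii) ensuring all constants are genuinely uniform in both $x$ and $n$, which is why the localization argument guaranteeing $m_n(t)<\infty$ before applying Gr\"onwall should be mentioned explicitly. I would present the drift-versus-diffusion split and the Gr\"onwall step in detail and relegate the BDG constants and the standard localization to a remark that these are "standard arguments," as the lemma statement already advertises.
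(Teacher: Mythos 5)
Your proof is correct and is precisely the ``standard argument'' (integral representation, H\"older plus Burkholder--Davis--Gundy, linear growth of $\mu,\sigma$, boundedness of $\delta_\sigma$, independence of the Brownian increment from the left grid value, localization and Gr\"onwall) that the paper invokes for Lemma~\ref{eulprop} without writing it out. One cosmetic remark: the aside distinguishing $\delta\ge 1/n$ from $\delta<1/n$ is unnecessary (and its justification for $\delta<1/n$ is shaky), since all you need is $\EE[|W_u-W_{\uun}|^p]\le c_p$; the factor $\delta^{p/2}$ already comes from the prefactor $\delta^{p/2-1}$ times the length $\delta$ of the integration interval.
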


The following lemma provides a Markov property of the time-continuous quasi-Milsein
scheme
$\eul^x_n$ relative to the gridpoints $0,1/n,2/n,\ldots,1$.

\begin{lemma}\label{markov}
For all $x\in\R$, all $n\in\N$, all $j\in\{0, \ldots, n-1\}$ and $\PP ^{\eul_{n,j/n}^x} $-almost all $y\in\R$ we have
\[
\PP ^{(\eul_{n,t}^x)_{t\in [j/n, 1]}|\mathcal F_{j/n}}=\PP ^{(\eul_{n,t}^x)_{t\in [j/n, 1]}|\eul_{n,j/n}^x}
\]
as well as
\[
\PP ^{(\eul_{n,t}^x)_{t\in [j/n, 1]}|\eul_{n,j/n}^x=y}=\PP ^{(\eul^y_{n,t})_{t\in [0,1-j/n]}}.
\]
\end{lemma}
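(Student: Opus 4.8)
The plan is to exploit the fact that the time-continuous quasi-Milstein scheme is, by construction, a deterministic recursive functional of the Brownian increments together with the Brownian path between grid points. More precisely, I would first write down explicitly the dependence structure. Fix $x\in\R$ and $n\in\N$. For $i\in\{0,\dots,n-1\}$ define the (random) map $\Psi_i\colon\R\times C([0,1/n])\to C([i/n,(i+1)/n])$ by
\[
\Psi_i(a,w)(t)=a+\mu(a)\,(t-i/n)+\sigma(a)\,w(t-i/n)+\tfrac12\sigma\delta_\sigma(a)\bigl(w(t-i/n)^2-(t-i/n)\bigr),
\]
so that, setting $w^{(i)}=(W_{i/n+u}-W_{i/n})_{u\in[0,1/n]}$, we have $(\eul^x_{n,t})_{t\in[i/n,(i+1)/n]}=\Psi_i(\eul^x_{n,i/n},w^{(i)})$ and in particular $\eul^x_{n,(i+1)/n}=\Psi_i(\eul^x_{n,i/n},w^{(i)})((i+1)/n)=:F_i(\eul^x_{n,i/n},w^{(i)})$. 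The key structural observations are: (a) $\eul^x_{n,j/n}$ is $\mathcal F_{j/n}$-measurable, being a deterministic function of $W_{1/n},\dots,W_{j/n}$; and (b) the family of increments $(w^{(j)},w^{(j+1)},\dots,w^{(n-1)})$ is independent of $\mathcal F_{j/n}$ (this uses that $W$ is an $(\mathcal F_t)$-Brownian motion, so $(W_{j/n+u}-W_{j/n})_{u\ge 0}$ is independent of $\mathcal F_{j/n}$), and each $w^{(i)}$ is a Brownian motion on $[0,1/n]$.

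For the first identity, note that $(\eul^x_{n,t})_{t\in[j/n,1]}$ is obtained by iterating the maps $\Psi_j,\Psi_{j+1},\dots$ starting from the value $\eul^x_{n,j/n}$ and feeding in $w^{(j)},\dots,w^{(n-1)}$; hence it is of the form $H(\eul^x_{n,j/n};w^{(j)},\dots,w^{(n-1)})$ for a fixed measurable $H$. Since $\eul^x_{n,j/n}$ is $\mathcal F_{j/n}$-measurable and the tuple $(w^{(j)},\dots,w^{(n-1)})$ is independent of $\mathcal F_{j/n}$, a standard substitution/freezing lemma for conditional distributions (conditioning on $\mathcal F_{j/n}$ versus on the single random variable $\eul^x_{n,j/n}$ gives the same regular conditional law, because all the remaining randomness is independent of $\mathcal F_{j/n}$) yields
\[
\PP^{(\eul^x_{n,t})_{t\in[j/n,1]}\mid\mathcal F_{j/n}}=\PP^{(\eul^x_{n,t})_{t\in[j/n,1]}\mid\eul^x_{n,j/n}}\qquad\PP\text{-a.s.}
\]
For the second identity I would again use the freezing lemma: for $\PP^{\eul^x_{n,j/n}}$-a.a.\ $y$, the regular conditional law of $H(\eul^x_{n,j/n};w^{(j)},\dots,w^{(n-1)})$ given $\eul^x_{n,j/n}=y$ equals the law of $H(y;w^{(j)},\dots,w^{(n-1)})$. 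On the other hand, $(\eul^y_{n,t})_{t\in[0,1-j/n]}$ is obtained by iterating the very same maps (re-indexed) starting from the deterministic value $y$ and feeding in a fresh family of $n-j$ independent Brownian motions on $[0,1/n]$; since $(w^{(j)},\dots,w^{(n-1)})$ has exactly this joint law, $H(y;w^{(j)},\dots,w^{(n-1)})$ and $(\eul^y_{n,t})_{t\in[0,1-j/n]}$ have the same distribution. Combining the two displays gives the claim. I would also remark that the time-shift $t\mapsto t-j/n$ matching up the maps $\Psi_{j+i}$ (for the $\eul^x$ side) with the maps defining $\eul^y$ is exactly the statement that the recursion is time-homogeneous on the grid, which is visible from the autonomy of $\mu,\sigma$ and the fact that only increments of $W$ enter.

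The main obstacle is purely technical rather than conceptual: making the "freezing'' argument rigorous in the path-space setting, i.e.\ verifying that $H$ is jointly measurable from $\R\times C([0,1/n])^{n-j}$ into $C([j/n,1])$ and invoking the disintegration/substitution lemma in the correct form for regular conditional distributions on Polish spaces. Measurability of $H$ follows because each $\Psi_i$ is continuous in $(a,w)$ (a finite composition of continuous operations) and finite composition preserves measurability; the substitution lemma is standard (e.g.\ as in Kallenberg or via monotone-class arguments on the Borel $\sigma$-algebra of $C([j/n,1])$). Everything else is bookkeeping with indices.
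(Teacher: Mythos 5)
Your proposal is correct and takes essentially the same approach as the paper: the paper's proof consists precisely in observing that $(\eul^x_{n,t+i/n})_{t\in[0,\ell/n]}=\psi\bigl(\eul^x_{n,i/n},(W_{t+i/n}-W_{i/n})_{t\in[0,\ell/n]}\bigr)$ for a fixed measurable $\psi$ (your composed map $H$) and then invoking exactly the freezing/substitution argument you describe. One small correction: your one-step map $\Psi_i$ is in general \emph{not} continuous in $a$, since $\delta_\sigma$ may be discontinuous at $\eta_1,\dots,\eta_{k_\sigma}$; what you actually need, and what does hold, is joint measurability, which follows from continuity in $w$ for fixed $a$ together with Borel measurability in $a$ for fixed $w$.
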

\begin{proof}
The lemma is an immediate consequence of the fact that, by definition of $\eul_n^x$, for every $\ell\in \{1,\ldots,n\}$  there exists a measurable mapping 
$\psi\colon \R\times C([0,\ell/n]) \to C([0,\ell/n]) $ such that for all $x\in\R$ and all $i\in\{0,1,\ldots,n-\ell\}$,
\[
(\eul^x_{n,t+i/n})_{t\in[0, \ell/n]} = \psi\bigl(\eul^x_{n, i/n},(W_{t+i/n}-W_{i/n})_{t\in[0, \ell/n]}\bigr).\qedhere
\]
\end{proof}

\subsection{Occupation time estimates for the time-continuous quasi-Milstein scheme}\label{4.2}

We first provide an estimate for the expected occupation time  of a neighborhood of   a non-zero of $\sigma$ by the time-continuous quasi-Milstein scheme $\eul_{n}^x$.

\begin{lemma}\label{occup}
Let $\xi\in\R$ satisfy $\sigma(\xi)\not=0$. Then there exists $ c\in (0, \infty)$ such that for   all $x\in\R$, all $n\in\N$ and all $\eps\in (0,\infty)$,
\begin{equation}
\int_0^1  \PP(\{|\eul_{n,t}^x-\xi|\leq \varepsilon\})\,dt\leq c\cdot (1+x^2)\cdot\Bigl(\varepsilon+\frac{1}{\sqrt n}\Bigr). 
\end{equation}
\end{lemma}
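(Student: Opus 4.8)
The plan is to derive the occupation time bound from an estimate on the transition density of the Euler--Milstein scheme, exploiting that $\sigma(\xi)\neq 0$ and the Lipschitz continuity (hence boundedness on compacts) of $\sigma$ forces a genuinely elliptic behaviour of the increments in a neighbourhood of $\xi$. First I would reduce the claim to a single step: by the integral form $\int_0^1\PP(\{|\eul_{n,t}^x-\xi|\le\eps\})\,dt=\sum_{i=0}^{n-1}\int_{i/n}^{(i+1)/n}\PP(\{|\eul_{n,t}^x-\xi|\le\eps\})\,dt$, it suffices to bound, uniformly in $i$ and in the starting value, the probability that $\eul_{n,t}^x$ lies in the $\eps$-strip around $\xi$ when $t$ ranges over a single subinterval. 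Conditioning on $\F_{i/n}$ and writing $y=\eul_{n,i/n}^x$, on $(i/n,(i+1)/n]$ the scheme reads $\eul_{n,t}^x=y+\mu(y)(t-i/n)+\sigma(y)B+\tfrac12\sigma\delta_\sigma(y)(B^2-(t-i/n))$ with $B=W_t-W_{i/n}\sim N(0,t-i/n)$, so the conditional law of $\eul_{n,t}^x$ is the push-forward of a centred Gaussian of variance $h:=t-i/n\le 1/n$ under an explicit quadratic map.

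The key step is a pointwise small-ball estimate: I claim there is $c\in(0,\infty)$, depending only on $\|\sigma'\|_\infty$ (finite by (B1)) and on $\xi,\sigma(\xi)$, such that for every $y\in\R$, every $h\in(0,1]$ and every $\eps>0$,
\[
\PP\bigl(|y+\mu(y)h+\sigma(y)B+\tfrac12\sigma\delta_\sigma(y)(B^2-h)-\xi|\le\eps\bigr)\le c\cdot(1+y^2)\cdot\Bigl(\frac{\eps}{\sqrt h}+\sqrt h\Bigr).
\]
To prove this, distinguish two regimes. If $|y-\xi|$ is large, say $|y-\xi|\ge R$ for a suitable constant $R$, then the deterministic part together with typical fluctuations of $B$ keep $\eul_{n,t}^x$ away from $\xi$ unless $B$ itself is atypically large; a Chebyshev/Gaussian-tail bound on $\{|B|\ge c|y-\xi|\}\cup\{|B|^2\ge c|y-\xi|\}$ gives a contribution bounded by $c(1+y^2)h$, which is $\le c(1+y^2)\sqrt h$. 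If $|y-\xi|<R$, then by continuity of $\sigma$ and $\sigma(\xi)\neq 0$ we have $|\sigma(y)|\ge\kappa>0$ for a constant $\kappa$ depending on $R$; moreover $|\tfrac12\sigma\delta_\sigma(y)|\le\tfrac12\|\sigma\|_{\infty,[\xi-R,\xi+R]}\|\sigma'\|_\infty=:L$. The map $b\mapsto \sigma(y)b+\tfrac12\sigma\delta_\sigma(y)b^2$ has derivative $\sigma(y)+\sigma\delta_\sigma(y)b$, which has absolute value $\ge\kappa/2$ as long as $|b|\le\kappa/(2\sigma\delta_\sigma(y))$ — in particular on an interval of fixed length $\ge\kappa/(2L)$ around $0$. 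On that interval the quadratic is a bi-Lipschitz diffeomorphism with lower Lipschitz constant $\kappa/2$, so the preimage of the $\eps$-strip around $\xi-y-\mu(y)h+\tfrac12\sigma\delta_\sigma(y)h$ is contained in at most two intervals of total length $\le 4\eps/\kappa$; since $B\sim N(0,h)$ has density bounded by $(2\pi h)^{-1/2}$, the probability that $B$ falls there is $\le c\eps/\sqrt h$, and the probability that $|B|$ exceeds $\kappa/(2L)$ is exponentially small, hence $\le c h\le c\sqrt h$. Combining, the single-step small-ball bound holds with the stated form.

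Granting the small-ball estimate, I finish as follows. Taking conditional expectation given $\F_{i/n}$ and then expectation, $\int_{i/n}^{(i+1)/n}\PP(\{|\eul_{n,t}^x-\xi|\le\eps\})\,dt\le c\,\EE[(1+(\eul_{n,i/n}^x)^2)]\int_{i/n}^{(i+1)/n}\bigl(\tfrac{\eps}{\sqrt{t-i/n}}+\sqrt{t-i/n}\bigr)\,dt$. The time integral equals $2\eps\sqrt{1/n}+\tfrac23 n^{-3/2}$, which is $\le c(\eps n^{-1/2}+n^{-3/2})\le c n^{-1}(\eps+n^{-1/2})$; summing over $i=0,\dots,n-1$ yields the bound $c\cdot\sup_{n,i}\EE[1+(\eul_{n,i/n}^x)^2]\cdot(\eps+n^{-1/2})$, and Lemma~\ref{eulprop} controls the supremum by $c(1+x^2)$. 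This gives exactly $\int_0^1\PP(\{|\eul_{n,t}^x-\xi|\le\eps\})\,dt\le c(1+x^2)(\eps+1/\sqrt n)$. The main obstacle is the two-regime small-ball estimate, and within it the $|y-\xi|<R$ case: one must be careful that the quadratic correction term does not destroy the lower bound on the derivative of the increment map, which is handled by restricting to the interval where $|\sigma\delta_\sigma(y)b|\le|\sigma(y)|/2$ and absorbing the complementary event into the $\sqrt h$ term using a Gaussian tail bound; one also needs to check that all constants can be chosen uniformly in $y$ over the compact region and uniformly in $n$.
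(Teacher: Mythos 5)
There is a genuine gap, and it is hidden by an arithmetic slip in your final accounting. Your one-step small-ball estimate is fine, but after integrating it over one subinterval you get $\int_{i/n}^{(i+1)/n}\bigl(\tfrac{\eps}{\sqrt{t-i/n}}+\sqrt{t-i/n}\bigr)\,dt=2\eps n^{-1/2}+\tfrac23 n^{-3/2}$, and your claim that this is $\le c\,n^{-1}(\eps+n^{-1/2})$ is false: $\eps n^{-1/2}\le c\,\eps n^{-1}$ fails for large $n$. Summing the $n$ subintervals, your argument actually yields $c\,(1+x^2)\,(\eps\sqrt{n}+n^{-1/2})$, which is weaker than the lemma by a factor $\sqrt n$ on the $\eps$-term. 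Moreover, this is not repairable within your framework: you only use the one-step transition from the most recent grid point together with second moments of $\eul^x_{n,i/n}$, and that information cannot rule out the scheme concentrating near $\xi$ at the grid points themselves. A hypothetical process sitting exactly at $\xi$ at every grid point has bounded second moments and saturates your one-step bound, yet its occupation time of the $\eps$-strip is of order $\min(1,\eps\sqrt n)$, violating the asserted bound already for $\eps=n^{-1/2}$. So the heart of the lemma is an anti-concentration statement for $\eul^x_{n,i/n}$ propagated over many steps, which your argument never establishes; note also that you cannot simply condition several steps back to get a bounded multi-step density, because (B3) only guarantees $\sigma\neq 0$ at the points $\xi_i,\eta_i$, so $\sigma(\eul^x_{n,j/n})$ may vanish at the random intermediate positions.

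The paper avoids this issue entirely by a local time argument: $\eul^x_n$ is a continuous semimartingale with quadratic variation $\int_0^\cdot(\Sigma^x_{n,s})^2\,ds$, Tanaka's formula plus the Burkholder--Davis--Gundy inequality and Lemma~\ref{eulprop} give $\sup_{a\in\R}\EE[L^a_1(\eul^x_n)]\le c\,(1+|x|)$, and the occupation time formula then bounds $\EE\bigl[\int_0^1 1_{[\xi-\eps,\xi+\eps]}(\eul^x_{n,t})\,(\Sigma^x_{n,t})^2\,dt\bigr]$ by $c\,(1+|x|)\,\eps$. Since $\sigma^2\ge\kappa>0$ on a neighbourhood of $\xi$, one converts this into a bound on the occupation time of the strip at the price of $\EE\bigl[|\sigma^2(\eul^x_{n,t})-(\Sigma^x_{n,t})^2|\bigr]\le c\,(1+x^2)\,n^{-1/2}$, which is exactly where the $n^{-1/2}$ in the lemma comes from. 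If you want to keep a conditioning-based proof, you would first need a grid-point estimate of the type $\int_0^1\PP(|\eul^x_{n,\utn}-\xi|\le\delta)\,dt\lesssim(1+x^2)(\delta+n^{-1/2})$, i.e. essentially the same occupation-time statement, so the local time route is the natural one.
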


\begin{proof}
Let  $x\in\R$ and $n\in\N$. For $t\in[0,1]$ put
\[
\Sigma^x_{n,t}=\sigma(\eul_{n,\utn}^x)+\sigma \delta_\sigma (\eul_{n,\utn}^x)\cdot (W_t-W_{\utn}).
\]
Using~\eqref{LG},~\eqref{bound},
\eqref{intrep}
and Lemma~\ref{eulprop} we conclude that $\eul_n^x$ is a continuous semi-martingale
 with quadratic variation 
\begin{equation}\label{qv}
\langle \eul_{n}^x\rangle_t
=\int_0^t (\Sigma^x_{n,s})^2\, ds,\quad t\in[0,1].
\end{equation}
For $a\in\R$ let $L^a(\eul_n^x) = (L^a_t(\eul_n^x))_{t\in[0,1]}$ denote the local time
of $\eul_{n}^x$ at the point $a$.
Thus, 
for all $a\in\R$ and
 all $t\in[0,1]$, 
\begin{align*}
|\eul_{n,t}^x-a| & = |x-a| + \int_0^t \sgn(\eul_{n,s}^x-a)\cdot \mu (\eul_{n,s}^x)\, ds + \int_0^t \sgn(\eul_{n,s}^x-a)\cdot \Sigma^x_{n,s} \, dW_s + L^a_t(\eul_n^x),
\end{align*}
where $\sgn(y) = 1_{(0,\infty)}(y) - 1_{(-\infty,0]}(y)$ for $y\in\R$,
see, e.g.~\cite[Chap.~VI]{RevuzYor2005}.
Hence, 
for all $a\in\R$ and
 all $t\in[0,1]$,
 \begin{equation}\label{jjj0}
\begin{aligned}
L^a_t(\eul_n^x) & \le |\eul^x_{n,t}-x| + \int_0^t |\mu (\eul_{n,s}^x)|\, ds + \Bigl|\int_0^t \sgn(\eul_{n,s}^x-a)\cdot \Sigma^x_{n,s} \, dW_s\Bigr|\\
&\leq 2\int_0^t |\mu (\eul_{n,s}^x)|\, ds +\Bigl|\int_0^t  \Sigma^x_{n,s} \, dW_s\Bigr|+ \Bigl|\int_0^t \sgn(\eul_{n,s}^x-a)\cdot \Sigma^x_{n,s} \, dW_s\Bigr|.
\end{aligned}
\end{equation}

Using~\eqref{LG},~\eqref{jjj0}, the H\"older inequality and the Burkholder-Davis-Gundy inequality we obtain that
there exists $c\in (0,\infty)$ such that
  for all $x\in\R$, all $n\in\N$,
all $a\in\R$ 
   and all $t\in[0,1]$,
 
\begin{equation}\label{jjj1}
\EE\bigl[L^a_t(\eul_n^x)\bigr]  \le c\cdot \int_0^1 \bigl(1+\EE\bigl[|\eul_{n,s}^x|\bigr]\bigr)\, ds+c \,\Bigl(\int_0^1  \EE\bigl[(\Sigma^x_{n,s})^2\bigr] \, ds\Bigr)^{1/2}.
\end{equation}  
By \eqref{LG},~\eqref{bound}
and the fact that for all $s\in[0,1]$ the random variables $\eul_{n,\usn}^x$ and $W_s-W_{\usn}$ are independent we obtain that there exist $c_1, c_2\in(0,\infty)$ such that for all $s\in[0,1]$, all $x\in\R$ and all $n\in\N$,
\begin{equation}\label{jjj2}
\EE\bigl[(\Sigma^x_{n,s})^2\bigr]\leq c_1\cdot \EE\bigl[(1+|\eul_{n,\usn}^x|)^2\cdot (1+|W_s-W_{\usn}|)^2\bigr]\leq c_2\cdot\bigl(1+\EE\bigl[ (\eul_{n,\usn}^x)^2\bigr]\bigr).
\end{equation}
By employing Lemma~\ref{eulprop} we conclude from~\eqref{jjj1} and~\eqref{jjj2} 
that there exist 
$c_1, c_2 \in (0, \infty)$ such that
  for all $x\in\R$, all $n\in\N$,
all $a\in\R$ 
   and all $t\in[0,1]$,
\begin{equation}\label{local1}
\EE\bigl[L^a_t(\eul_n^x)\bigr] 
\leq  c_1\cdot \Bigl(1+
\EE\bigl[\|\eul_{n}^x\|_\infty^2\bigr]^{1/2}\Bigr)\leq  c_2\cdot (1+|x|).
\end{equation}
Using~\eqref{qv},~\eqref{local1} and the occupation 
time
formula it follows that
there exists $c\in (0,\infty)$ such that  
for all $x\in\R$, all $n\in\N$ and all $\eps\in (0,\infty)$,
\begin{equation}\label{local2}
  \EE\biggl[\int_0^1 1_{[\xi-\eps,\xi+\eps]}(\eul^x_{n,t})\cdot (\Sigma^x_{n,t})^2\, dt\biggr]
 = \int_{\R}1_{[\xi-\eps,\xi+\eps]}(a)\cdot \EE\bigl[L^a_t(\eul_n^x)\bigr]\, da 
 \le c\cdot (1+|x|)\cdot \eps. 
\end{equation}

By ~\eqref{LG},~\eqref{bound}
and the Lipschitz continuity of $\sigma$
we obtain that 
there exist  $c_1,c_2\in (0,\infty)$ such that 
for all $x\in\R$, all $n\in\N$ and all $t\in[0,1]$,
\begin{align*}
\bigl|\sigma^2(\eul^x_{n,t})-(\Sigma^x_{n,t})^2\bigr|&\leq \bigl|\sigma(\eul^x_{n,t})-\Sigma^x_{n,t}\bigr|\cdot \bigl(|\sigma(\eul^x_{n,t})|+|\Sigma^x_{n,t}|\bigr)\\
&\leq c_1\cdot \bigl(|\sigma(\eul^x_{n,t})-\sigma(\eul_{n,\utn}^x)|+|\sigma \delta_\sigma (\eul_{n,\utn}^x)|\cdot |W_t-W_{\utn}|\bigr)\\
&\qquad\,\cdot \bigl(1+|\eul^x_{n,t}|+(1+|\eul^x_{n,\utn}|)\cdot(1+|W_t-W_{\utn}|)\bigr)\\
&\leq c_2\cdot  \bigl(|\eul^x_{n,t}-\eul_{n,\utn}^x|+(1+|\eul_{n,\utn}^x|)\cdot |W_t-W_{\utn}|\bigr) \\
&\qquad\,\cdot(1+\|\eul^x_{n}\|_\infty)\cdot(1+|W_t-W_{\utn}|).
\end{align*}
Thus, using the H\"older inequality
and Lemma~\ref{eulprop}  we conclude that there exist  $c\in (0,\infty)$ such that 
for all $x\in\R$, all $n\in\N$ and all $t\in[0,1]$,
\begin{equation}\label{local3} 
\EE\bigl[|\sigma^2(\eul^x_{n,t})-(\Sigma^x_{n,t})^2|\bigr]\leq c\cdot (1+x^2)\cdot \frac{1}{\sqrt n}.
\end{equation}

Since $\sigma$ is continuous and $\sigma(\xi)\neq 0$ there exist  $\kappa,\eps_0\in(0,\infty)$ such that 
\begin{equation}\label{localx} 
\inf_{z\in\R: |z-\xi|< \eps_0}\sigma^2(z) \ge \kappa.
\end{equation}
Using~\eqref{local2},~\eqref{local3} and~\eqref{localx} we obtain that there exists $c\in (0,\infty)$ such that for all $x\in\R$, all $n\in\N$ and all $\eps \in (0,\eps_0]$,
\begin{align*}
 \int_0^1  \PP(\{|\eul_{n,t}^x-\xi|\leq \varepsilon\})\,dt  &  = \frac{1}{\kappa}\cdot \EE\Bigl[\int_0^1 \kappa\cdot 1_{[\xi-\eps,\xi+\eps]}(\eul^x_{n,t})\, dt\Bigr] \\ &  \le \frac{1}{\kappa}\cdot \EE\Bigl[\int_0^1 1_{[\xi-\eps,\xi+\eps]}(\eul^x_{n,t})\cdot  \sigma^2(\eul^x_{n,t})\, dt\Bigr] \\
& \le  \frac{1}{\kappa}\cdot \EE\biggl[\int_0^1\Bigl( 1_{[\xi-\eps,\xi+\eps]}(\eul^x_{n,t})\cdot  (\Sigma^x_{n,t})^2 + \bigl|\sigma^2(\eul^x_{n,t})-(\Sigma^x_{n,t})^2\bigr|\Bigr)\, dt\biggr]\\
&  \le  \frac{c}{\kappa}\cdot (1+|x|+x^2)\cdot \Bigl( \eps + \frac{1}{\sqrt n}\Bigr),
\end{align*}  
which completes the proof of the lemma.
\end{proof}

The following result provides moment estimates subject to the condition of a sign change of the process $\eul_n -\xi$ at time $t$ relative to its sign at the grid point $\utn$. 

\begin{lemma}\label{central}
   Let $q\in [1,\infty)$, $\xi\in\R$, and let
\[
A_{n,t} =\{(\eul_{n,t}-\xi)\cdot(\eul_{n, \utn}-\xi)\leq 0\}
\]
for all $n\in\N$ and $t\in[0,1]$. 
Then there exists $c\in(0, \infty)$ such that for all $n\in\N$, all $0\le s\le t \le 1$ with $\utn-s \ge 1/n$ and all real-valued, non-negative, $\F_s$-measurable random
variables  $Y$,
\begin{equation}\label{central1}
\begin{aligned}
& \EE\bigl[Y\cdot |W_t-W_{\utn}|^q\cdot \ind_{A_{n,t}}\bigr]\\
& \qquad\qquad \le \frac{c}{n^{q/2+1}}\cdot \EE[Y] + \frac{c}{n^{q/2}}\cdot \int_{\R} |z|^q\cdot \EE\bigl[Y\cdot \ind_{ \{|\widehat X_{n, \utn-(t-\utn)}-\xi| \le \tfrac{c}{\sqrt{n}} (1+|z|)\}}\bigr]\cdot e^{-\frac{z^2}{2}}\, dz.
\end{aligned}
\end{equation}
\end{lemma}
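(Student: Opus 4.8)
The plan is to condition on the information available one Brownian step before $\utn$, to extract from the sign-change event a quantitative bound forcing $\eul_{n,\utn-(t-\utn)}$ to lie close to $\xi$, and then to finish with an elementary Gaussian tail estimate. Write $j=n\utn$ and $h=t-\utn$; the case $h=0$ is trivial (both sides vanish), so assume $h\in(0,1/n)$, in which case the hypothesis $\utn-s\ge 1/n$ gives $j\ge 1$ and forces $Y$ to be $\F_{(j-1)/n}$-measurable. I would introduce the mutually independent increments $B_1=W_{\utn-h}-W_{(j-1)/n}$, $B_2=W_{\utn}-W_{\utn-h}$ and $B_3=W_t-W_{\utn}$, of which $B_2,B_3$ are i.i.d.\ $N(0,h)$, and all three independent of $\F_{(j-1)/n}$; put $a=\eul_{n,(j-1)/n}$, and note that $Y$, $a$, $B_1$ and $\eul_{n,\utn-(t-\utn)}=\eul_{n,\utn-h}$ are all $\F_{\utn-h}$-measurable. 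From the recursive definition of the scheme on $((j-1)/n,\utn]$ and on $(\utn,t]$, together with \eqref{LG} and \eqref{bound}, one obtains $\eul_{n,\utn}=\eul_{n,\utn-h}+\lambda B_2+D_2$ and $\eul_{n,t}=\eul_{n,\utn}+\sigma(\eul_{n,\utn})B_3+D_3$, where $\lambda=\sigma(a)+\sigma\delta_\sigma(a)B_1$, $|D_2|\le c(1+|a|)(h+B_2^2)$ and $|D_3|\le c(1+|\eul_{n,\utn}|)(h+B_3^2)$.

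The geometric heart of the matter is that on $A_{n,t}$ the points $\eul_{n,\utn}$ and $\eul_{n,t}$ lie on opposite sides of $\xi$, so $|\eul_{n,\utn}-\xi|\le|\eul_{n,t}-\eul_{n,\utn}|\le c(1+|\eul_{n,\utn}|)(|B_3|+h+B_3^2)$; since the elementary implication ``$|y-\xi|\le\rho(1+|\xi|+|y-\xi|)$ with $\rho\le 1/2$'' yields $|y-\xi|\le 2\rho(1+|\xi|)$, this gives, on the good event $G=\{|B_1|\le\Delta_1,\ |B_2|\le\Delta_2,\ |B_3|\le\Delta_3\}$ for suitable absolute constants $\Delta_i$, the bound $|\eul_{n,\utn}-\xi|\le\bar c\,(|B_3|+1/n)$. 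On $G\cap A_{n,t}$ the value $\eul_{n,\utn}$ is then bounded by a constant; inverting the one-step maps — whose distortion is controlled precisely because the increments are bounded — shows that $a$, $\lambda$ and $|D_2|$ are bounded, and transferring through $\eul_{n,\utn}=\eul_{n,\utn-h}+\lambda B_2+D_2$ yields $|\eul_{n,\utn-(t-\utn)}-\xi|\le C(|B_2|+|B_3|+1/n)$. On the complementary part $A_{n,t}\setminus G\subseteq\{|B_1|>\Delta_1\}\cup\{|B_2|>\Delta_2\}\cup\{|B_3|>\Delta_3\}$ I would simply bound $\ind_{A_{n,t}}\le 1$ and use that $B_1,B_2,B_3$ are independent of $Y$: each contribution factorizes as $\EE[Y]$ times a Gaussian tail of the shape $\EE[|B_3|^q]\,\PP(|B_i|>\Delta_i)$ or $\EE[|B_3|^q\ind_{\{|B_3|>\Delta_3\}}]$, all $O(n^{-q/2}e^{-c n})$, hence comfortably inside the budget $\tfrac{c}{n^{q/2+1}}\EE[Y]$.

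For the good part, conditioning on $\F_{\utn-h}$ pulls out $Y$ and $\eul_{n,\utn-h}$ and reduces matters to the integral of $|B_3|^q\ind_{\{|\eul_{n,\utn-h}-\xi|\le C(|B_2|+|B_3|+1/n)\}}$ against the i.i.d.\ $N(0,h)$ pair $(B_2,B_3)$; substituting $B_i=z_i\sqrt h$, using $\sqrt h\le 1/\sqrt n$, $h^{q/2}\le n^{-q/2}$ and $1+|z_2|+|z_3|\le 2(1+\max(|z_2|,|z_3|))$ together with a union bound reduces this to a single standard Gaussian $z$, and comparing the resulting tail integral with $\int_\R|z|^q\ind_{\{|\eul_{n,\utn-h}-\xi|\le\frac c{\sqrt n}(1+|z|)\}}e^{-z^2/2}\,dz$ — separating the case $|\eul_{n,\utn-h}-\xi|<\mathrm{const}/\sqrt n$, where the target indicator is identically $1$ and the crude bound $\EE[|B_3|^q]\le c\,n^{-q/2}$ suffices, from the complementary case, where $|z|^q\ge 1$ on the event and the relevant inclusion of tail sets holds once $c$ is taken large enough — bounds it by $\tfrac c{n^{q/2}}\int_\R|z|^q\ind_{\{|\eul_{n,\utn-h}-\xi|\le\frac c{\sqrt n}(1+|z|)\}}e^{-z^2/2}\,dz$. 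Taking expectations and applying Tonelli then recovers the second term of \eqref{central1}; the finitely many small $n$ are disposed of by enlarging $c$, since there the left-hand side is at most $\EE[Y]\,\EE[|W_t-W_{\utn}|^q]\le c\,\EE[Y]$.

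The step I expect to be the main obstacle is exactly this transfer from $\eul_{n,\utn}$ to $\eul_{n,\utn-(t-\utn)}$ across the increment $\lambda B_2$: the coefficient $\lambda=\sigma(a)+\sigma\delta_\sigma(a)B_1$ grows only linearly in the unbounded quantity $a=\eul_{n,(j-1)/n}$, so it can be controlled only on the event where all three Brownian increments stay below absolute constants. This is what forces the split into the good and bad events and, correspondingly, the verification that the residual bad event — rare only because Gaussian increments have light tails and independent of $Y$ only because $s\le \utn-1/n$ — still fits inside the $\tfrac{c}{n^{q/2+1}}\EE[Y]$ term. A secondary, but routine, point is the final matching of the Gaussian tail to the exact form $\int_\R|z|^q\ind_{\{\,\cdot\,\le\frac c{\sqrt n}(1+|z|)\}}e^{-z^2/2}\,dz$ with a single constant $c$ serving simultaneously in the two prefactors and inside the indicator.
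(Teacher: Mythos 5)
Your proposal is correct and follows essentially the same route as the paper: both establish the pathwise inclusion of $A_{n,t}$ intersected with a "small increments" event into $\{|\widehat X_{n,\utn-(t-\utn)}-\xi|\le \tfrac{c}{\sqrt n}(1+|Z_1|+|Z_2|)\}$ via the same absorption trick for the linear-growth factor, use independence of the post-$(j-1)/n$ increments from $\F_s$ to treat the complementary event, and then reduce to a single Gaussian integral. The only (harmless) deviations are technical: you truncate the raw increments at small absolute constants (giving an exponentially small bad event) where the paper truncates the normalized increments at $2\sqrt{\ln n}$ and uses Cauchy--Schwarz, and you pass from two Gaussians to one by a union bound and case distinction where the paper uses rotation invariance of $(z_1+z_2)/\sqrt 2$.
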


\begin{proof} 
Choose $K\in (0,\infty)$ according to~\eqref{LG}, put
\[
\kappa=K\cdot (1+|\xi|) \cdot(1+\|\delta_\sigma\|_\infty)
\]
 and choose $n_0\in\N\setminus\{1,2\}$ 
such that for all $n \geq n_0$,
\begin{equation}\label{n0}
\frac{16 \ln(n)}{\sqrt n}\leq 1 \qquad \text{and}\qquad 8\kappa\cdot\frac{1+2\sqrt{\ln(n)}}{\sqrt{n}}\le \frac{1}{2}.
\end{equation}
Without loss of generality we may assume that $n\ge n_0$.
Let $ 0\le s\le t \le 1$ with $\utn-s \ge 1/n$ and let $Y$ be a real-valued, 
non-negative, $\F_s$-measurable random variable.
If $t=\utn$ then \eqref{central1} trivially holds 
for any $c\in (0,\infty)$.

Now assume that $t > \utn$ and put 
\[
Z_1 = \frac{W_t-W_{\utn}}{\sqrt{t-\utn}},\quad Z_2 = \frac{W_{\utn}-W_{\utn-(t-\utn)}}{\sqrt{t-\utn}},\quad Z_3=\frac{W_{\utn-(t-\utn)}-W_{\utn-1/n} }{\sqrt{1/n-(t-\utn)}}.
\]
Below we show that 
\begin{equation}\label{central2}
\begin{aligned}
&A_{n,t}\cap \bigl\{\max_{i\in\{1,2,3\}}|Z_i| \le 2\sqrt{\ln(n)}\bigr\}   \subset \bigl\{|\widehat X_{n, \utn-(t-\utn)}-\xi| \le 8\kappa\cdot (1+|Z_1|+|Z_2|)/\sqrt{n}\bigr\}. 
\end{aligned}
\end{equation}
Note that $Z_1,Z_2,Z_3$ are independent and identically  distributed standard normal random variables. Moreover, $(Z_1,Z_2,Z_3)$ is independent of $\F_s$ since $s \le \utn-1/n$, $(Z_1,Z_2)$ is independent of $\F_{\utn-(t-\utn)}$ and $\widehat X_{n, \utn-(t-\utn)}$ is $\F_{\utn-(t-\utn)}$-measurable. Using the latter three facts jointly
 with~\eqref{central2} and a standard estimate of standard normal tail probabilities we  obtain that
\begin{align*}
& \EE\bigl[Y\cdot |W_t-W_{\utn}|^q\cdot \ind_{A_{n,t}}\bigr]\\
& \qquad= (t-\utn)^{q/2}\cdot\EE\bigl[Y\cdot |Z_1|^q\cdot \ind_{A_{n,t}}\bigr]\\
& \qquad\le \frac{1}{n^{q/2}}\cdot \EE\bigl[Y\cdot |Z_1|^q\cdot \ind_{\{|\widehat X_{n, \utn-(t-\utn)}-\xi| \le 8\kappa\cdot (1+|Z_1|+|Z_2|)/\sqrt{n}\}}\bigr] \\
& \qquad \qquad   + \frac{1}{n^{q/2}}\cdot \EE\bigl[Y\cdot |Z_1|^q\cdot \ind_{\{\max_{i\in\{1,2,3\}}|Z_i| > 2\sqrt{\ln(n)}\}}\bigr]\\
&  \qquad  = \frac{2}{\pi n^{q/2}}\int_{[0,\infty)^2} \EE\bigl[Y\cdot z_1^q\cdot \ind_{\{|\widehat X_{n, \utn-(t-\utn)}-\xi| \le 8\kappa\cdot (1+z_1+z_2)/\sqrt{n}\}}\bigr]\cdot e^{-\frac{z_1^2+z_2^2}{2}}\, d(z_1,z_2)\\
& \qquad \qquad   + \frac{1}{n^{q/2}}\cdot \EE[Y]\cdot \EE\bigl[|Z_1|^q\cdot \ind_{\{\max_{i\in\{1,2,3\}}|Z_i| > 2\sqrt{\ln(n)}\}}\bigr]\\
&  \qquad  \le \frac{2^{q/2+1}}{\pi n^{q/2}}\int_{\R^2}\EE\bigl[Y\cdot \bigl(\tfrac{|z_1+z_2|}{\sqrt 2}\bigr)^q\cdot \ind_{\{|\widehat X_{n, \utn-(t-\utn)}-\xi| \le 8\sqrt 2\kappa\cdot (1+|z_1+z_2|/\sqrt 2)/\sqrt{n}\}}\bigr]\cdot e^{-\frac{z_1^2+z_2^2}{2}}\, d(z_1,z_2)\\
& \qquad \qquad   + \frac{1}{n^{q/2}}\cdot \EE[Y]\cdot \EE\bigl[Z_1^{2q}\bigr]^{1/2}\cdot \bigl(\PP\bigl(\bigl{\{}\max_{i\in\{1,2,3\}}|Z_i| > 2\sqrt{\ln(n)}\bigr{\}}\bigr)\bigr)^{1/2}\\
&  \qquad  \leq \frac{2^{q/2+2}}{\sqrt{2\pi}n^{q/2}}\int_{\R} \EE\bigl[Y\cdot |z|^q\cdot \ind_{\{|\widehat X_{n, \utn-(t-\utn)}-\xi| \le 8\sqrt 2\kappa\cdot (1+|z|)/\sqrt{n}\}}\bigr]\cdot e^{-\frac{z^2}{2}}\, dz\\
 & \qquad \qquad + \frac{\sqrt{3\cdot 5\cdots (2q-1)}}{n^{q/2}}\cdot \EE[Y]\cdot \Bigl(\frac{3}{2\sqrt{2\pi \ln(n)}\cdot n^2}\Bigr)^{1/2},
\end{align*}
which yields~\eqref{central1}. 

 It remains to prove the inclusion~\eqref{central2}. To this end let $\omega\in\Omega$ and assume that
\begin{equation}\label{central3}
\omega\in A_{n,t}\text{\quad and \quad} \max_{i\in\{1,2,3\}}|Z_i(\omega)| \le 2\sqrt{\ln(n)}.
\end{equation}
By \eqref{LG} and~\eqref{central3}, 
\begin{equation}\label{abc}
\begin{aligned}
|\eul_{n,\utn}(\omega) -\xi| & \le |\eul_{n,t}(\omega)- \eul_{n,\utn}(\omega)| \\
&  = \Bigl|\mu(\eul_{n,\utn}(\omega))\cdot (t-\utn) + \sigma(\eul_{n,\utn}(\omega))\cdot \sqrt{t-\utn}\cdot Z_1(\omega)\\
&\qquad +\frac{1}{2}\sigma \delta_\sigma (\eul_{n,\utn}(\omega))\cdot (t-\utn)\cdot \bigl(Z_1^2(\omega)-1)\Bigr|\\
& \le K\cdot(1+|\eul_{n,\utn}(\omega)|)\cdot\Bigl(\frac{1}{n} +\frac{1}{\sqrt{n}}\cdot |Z_1(\omega)|+\frac{1}{\sqrt{n}}\cdot\|\delta_\sigma\|_\infty\cdot\frac{Z_1^2(\omega)+1}{2\sqrt n}\Bigr).
\end{aligned}
\end{equation}
Observe that for all $a,b\in\R$,
\begin{equation}\label{JJJ}
1+|a|\leq (1+|a-b|)\cdot (1+|b|).
\end{equation}
Moreover, \eqref{n0} and~\eqref{central3} yield
\begin{equation}\label{abc1}
\frac{Z_1^2(\omega)+1}{2\sqrt n}\leq \frac{4\ln(n)+1}{2\sqrt n}\leq \frac{5\ln(n)}{2\sqrt n} \leq 1.
\end{equation}
Combining~\eqref{abc} with~\eqref{abc1} and employing~\eqref{JJJ} with $a=\eul_{n,\utn}(\omega)$ and $b=\xi$ we get
\begin{equation}\label{LLL}
|\eul_{n,\utn}(\omega) -\xi|\leq  \frac{\kappa}{\sqrt{n}}\cdot (1+|\eul_{n,\utn}(\omega)-\xi|)\cdot (1+|Z_1(\omega)|).
\end{equation}
Similarly one can show that
\begin{equation}\label{central6}
|\eul_{n,\utn-(t-\utn)}(\omega)-\eul_{n,\utn-1/n}(\omega)| \le  \frac{\kappa}{\sqrt{n}}\cdot (1+|\eul_{n,\utn-1/n}(\omega)-\xi|)\cdot (1+|Z_3(\omega)|).
\end{equation} 
Furthermore, by \eqref{LG},
\begin{equation}\label{abc2} 
\begin{aligned}
& |\eul_{n,\utn}(\omega)-\eul_{n,\utn-(t-\utn)}(\omega)|\\
& \qquad\qquad  = \Bigl|\mu(\eul_{n,\utn-1/n}(\omega))\cdot (t-\utn) + \sigma(\eul_{n,\utn-1/n}(\omega))\cdot \sqrt{t-\utn}\cdot Z_2(\omega)\\
& \qquad\qquad \qquad+\frac{1}{2}\sigma\cdot \delta_\sigma (\eul_{n,\utn-1/n}(\omega))\cdot  \bigl(u-(t-\utn)\bigr)\Bigr|\\
&\qquad\qquad \leq K\cdot(1+|\eul_{n,\utn-1/n}(\omega)|)\cdot\Bigl(\frac{1}{n} +\frac{1}{\sqrt{n}}\cdot |Z_2(\omega)|+\frac{1}{2}\cdot\|\delta_\sigma\|_\infty\cdot\Bigl(|u|+\frac{1}{n}\Bigr)\Bigr),
\end{aligned}
\end{equation}
where
\[
u=(W_{\utn}(\omega)-W_{\utn-1/n}(\omega))^2 -(W_{\utn-(t-\utn)}(\omega)-W_{\utn-1/n}(\omega))^2.
\]
Observing that for all $a,b\in\R$,
\[
|(a+b)^2-b^2|\leq (|a|+|b|)^2,
\]
and using~\eqref{n0} as well as~\eqref{central3} we obtain
\begin{equation}\label{abc3}
|u|\leq  \bigl(\sqrt{t-\utn}\cdot |Z_2(\omega)|+\sqrt{1/n-(t-\utn)}\cdot |Z_3(\omega)|\bigr)^2\leq \frac{16 \ln(n)}{n}\leq \frac{1}{\sqrt n}.
\end{equation}
Combining~\eqref{abc2} with~\eqref{abc3} and employing~\eqref{JJJ} with $a=\eul_{n,\utn-1/n}(\omega) $ and $b=\xi$ we
conclude that
\begin{equation}\label{central5}
|\eul_{n,\utn}(\omega)-\eul_{n,\utn-(t-\utn)}(\omega)| \le \frac{\kappa}{\sqrt{n}}\cdot (1+|\eul_{n,\utn-1/n}(\omega)-\xi|)\cdot   (1+|Z_2(\omega)|).
\end{equation} 

Clearly,  we have
\begin{equation}\label{yyy}
|\eul_{n,\utn-(t-\utn)}(\omega) -\xi|\le |\eul_{n,\utn}(\omega)-\eul_{n,\utn-(t-\utn)}(\omega)| +|\eul_{n,\utn}(\omega)-\xi|.
\end{equation}
By~\eqref{n0} and~\eqref{central3}
we have  for all $i\in\{1,2,3\}$,
\begin{equation}\label{mmm}
 \frac{\kappa}{\sqrt{n}}\cdot (1+|Z_i(\omega)|)\le  \kappa\cdot \frac{1+ 2\sqrt{\ln(n)}}{\sqrt{n}} \leq \frac{1}{2}.
\end{equation}
Using \eqref{LLL} and \eqref{mmm} we obtain
\begin{equation}\label{central4}
|\eul_{n,\utn}(\omega)-\xi|\leq \frac{\frac{\kappa}{\sqrt{n}}\cdot (1+|Z_1(\omega)|)}{1-\frac{\kappa}{\sqrt{n}}\cdot (1+|Z_1(\omega)|)}\leq \frac{2\kappa}{\sqrt{n}}\cdot (1+|Z_1(\omega)|).
\end{equation}
Furthermore, by ~\eqref{central6} and \eqref{mmm},
\begin{align*}
 1 +|\eul_{n,\utn-(t-\utn)}(\omega) -\xi|  &\ge 1 + |\eul_{n,\utn-1/n}(\omega)-\xi| -|\eul_{n,\utn-(t-\utn)}(\omega)-\eul_{n,\utn-1/n}(\omega)|\\
&\ge (1 + |\eul_{n,\utn-1/n}(\omega)-\xi|)/2,
\end{align*} 
which jointly with \eqref{central5} yields 
\begin{equation}\label{bbb}
|\eul_{n,\utn}(\omega)-\eul_{n,\utn-(t-\utn)}(\omega)|\leq 
\frac{2\kappa}{\sqrt{n}}\cdot (1 +|\eul_{n,\utn-(t-\utn)}(\omega) -\xi|)\cdot   (1+|Z_2(\omega)|).
\end{equation}
Combining  \eqref{yyy}, \eqref{central4}  and \eqref{bbb} we conclude that
\begin{equation}\label{central8}
\begin{aligned}
& |\eul_{n,\utn-(t-\utn)}(\omega) -\xi| \le \frac{4\kappa}{\sqrt{n}}\cdot (1+|\eul_{n,\utn-(t-\utn)}(\omega)-\xi|)\cdot (1+|Z_1(\omega)|+|Z_2(\omega)|).
\end{aligned}
\end{equation} 
By~\eqref{n0} and~\eqref{central3},
\begin{equation}\label{abc5}
\frac{4\kappa}{\sqrt{n}}\cdot (1+|Z_1(\omega)|+|Z_2(\omega)|)\leq 8\kappa\cdot \frac{1+2\sqrt{\ln(n)}}{\sqrt n}\le \frac{1}{2}.
\end{equation}
Observing~\eqref{abc5} we obtain from~\eqref{central8} that
\[
|\eul_{n,\utn-(t-\utn)}(\omega) -\xi| \le  \frac{8\kappa}{\sqrt{n}}\cdot (1+|Z_1(\omega)|+|Z_2(\omega)|).
\]
This finishes the proof of~\eqref{central2}. 
\end{proof}

Using Lemmas~\ref{markov},~\ref{occup} and~\ref{central} we can now establish the following two estimates on time averages of moments subject to the condition of sign changes of $\eul_{n}-\xi$ relative to its sign at the gridpoints $0,1/n,\dots,1$.

\begin{lemma}\label{key}
Let $q\in [1,\infty)$, let $\xi\in\R$ satisfy $\sigma(\xi)\neq 0$ and let
\[
A_{n,t} =\{(\eul_{n,t}-\xi)\cdot(\eul_{n, \utn}-\xi)\leq 0\}
\]
for all $n\in\N$ and $t\in[0,1]$.  
Then there exists $c\in(0, \infty)$ such that for all $n\in\N$, all $s\in [0,1-1/n)$ and  all real-valued, non-negative, $\F_s$-measurable random variables  $Y$,
\begin{equation}\label{eee1}
 \int_{\usn+2/n}^1\EE \bigl[Y\cdot |W_t-W_{\utn}|^q\cdot \ind_{ A_{n,t}}\bigr]\, dt \le \frac{c}{n^{(q+1)/2}} \cdot \bigl( \EE[Y] + \EE\bigl[Y \cdot (\eul_{n,\usn+1/n}-\xi)^2\bigr]\bigr)
\end{equation}
and 
\begin{equation}\label{eee2}
\begin{aligned}
&\int_{\usn+1/n}^1\EE\bigl[Y\cdot |W_t-W_{\utn}|^ q\cdot \ind_{ A_{n,t}} \cdot (\eul_{n,\utn+1/n}-\xi)^2\bigr]\, dt\\
&\hspace{4cm} \le  \frac{c}{n^{q/2+1}} \cdot \bigl(\EE[Y] + \EE\bigl[Y \cdot (\eul_{n,\usn+1/n}-\xi)^2\bigr]\bigr).
\end{aligned}
\end{equation}
\end{lemma}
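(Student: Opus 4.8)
I would prove the two estimates by separate arguments: \eqref{eee1} by feeding Lemma~\ref{central} into the occupation time bound of Lemma~\ref{occup}, and \eqref{eee2} by elementary increment estimates for $\eul_n$ combined with the Markov property of Lemma~\ref{markov}. For \eqref{eee1}, fix $n,s,Y$ as in the statement; since $s<1-1/n$ we have $\lfloor ns\rfloor\le n-2$, and if $\lfloor ns\rfloor=n-2$ then $\usn+2/n=1$ and the left-hand side vanishes, so I may assume $\lfloor ns\rfloor\le n-3$. One checks that $\utn-s\ge 1/n$ for all $t\in[\usn+2/n,1]$, so for such $t$ Lemma~\ref{central} applies and yields a constant $c_0$ with
\[
\EE\bigl[Y\,|W_t-W_{\utn}|^q\,\ind_{A_{n,t}}\bigr]\le \frac{c_0}{n^{q/2+1}}\,\EE[Y]+\frac{c_0}{n^{q/2}}\int_{\R}|z|^q\,\EE\bigl[Y\,\ind_{\{|\eul_{n,2\utn-t}-\xi|\le c_0(1+|z|)/\sqrt n\}}\bigr]\,e^{-z^2/2}\,dz .
\]
I would then integrate over $t\in[\usn+2/n,1]$ and use Tonelli. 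The first term contributes at most $c_0\,n^{-(q+1)/2}\EE[Y]$. For the second, fix $z$, put $\eps:=c_0(1+|z|)/\sqrt n$, and note that on a grid interval $[i/n,(i+1)/n)$ (where $\utn=i/n$) the substitution $r=2i/n-t$ turns $\int_{i/n}^{(i+1)/n}\EE[Y\ind_{\{|\eul_{n,2\utn-t}-\xi|\le\eps\}}]\,dt$ into $\int_{(i-1)/n}^{i/n}\EE[Y\ind_{\{|\eul_{n,r}-\xi|\le\eps\}}]\,dr$; summing over the grid intervals inside $[\usn+2/n,1)$ bounds $\int_{\usn+2/n}^{1}\EE[Y\ind_{\{|\eul_{n,2\utn-t}-\xi|\le\eps\}}]\,dt$ by $\int_{j/n}^{1}\EE[Y\ind_{\{|\eul_{n,r}-\xi|\le\eps\}}]\,dr$ with $j/n:=\usn+1/n$. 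Since $Y$ is $\F_{j/n}$-measurable, Lemma~\ref{markov} gives $\EE[Y\ind_{\{|\eul_{n,r}-\xi|\le\eps\}}]=\EE[Y\,F_r(\eul_{n,j/n})]$ for $r\in[j/n,1]$, where $F_r(y)=\PP(|\eul_{n,r-j/n}^{y}-\xi|\le\eps)$, and $\int_{j/n}^{1}F_r(y)\,dr=\int_0^{1-j/n}\PP(|\eul_{n,\tau}^{y}-\xi|\le\eps)\,d\tau\le c\,(1+y^2)(\eps+n^{-1/2})$ by Lemma~\ref{occup} (applicable because $\sigma(\xi)\ne 0$). Hence by Tonelli $\int_{j/n}^{1}\EE[Y\ind_{\{|\eul_{n,r}-\xi|\le\eps\}}]\,dr\le c\,(\eps+n^{-1/2})\,\EE[Y(1+\eul_{n,j/n}^{2})]$. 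Using $1+\eul_{n,j/n}^{2}\le c\,(1+(\eul_{n,\usn+1/n}-\xi)^2)$, $\eps+n^{-1/2}\le c\,(1+|z|)n^{-1/2}$ and $\int_{\R}|z|^q(1+|z|)e^{-z^2/2}\,dz<\infty$, the second term contributes at most $c\,n^{-(q+1)/2}(\EE[Y]+\EE[Y(\eul_{n,\usn+1/n}-\xi)^2])$; adding the two contributions (and using $q/2+1\ge(q+1)/2$) gives \eqref{eee1}.

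For \eqref{eee2} the approach is more elementary. I would split $(\eul_{n,\utn+1/n}-\xi)^2\le 2(\eul_{n,\utn+1/n}-\eul_{n,\utn})^2+2(\eul_{n,\utn}-\xi)^2$ and bound the two contributions by conditioning on $\F_{\utn}$ (note that $Y$ is $\F_{\utn}$-measurable since $\utn\ge\usn+1/n>s$, and that the increments of $W$ on $[\utn,\utn+1/n]$ are independent of $\F_{\utn}$). For the first, \eqref{LG} and \eqref{bound} give $(\eul_{n,\utn+1/n}-\eul_{n,\utn})^2\le c(1+\eul_{n,\utn}^2)(n^{-2}+(W_{\utn+1/n}-W_{\utn})^2+(W_{\utn+1/n}-W_{\utn})^4)$, so bounding $\ind_{A_{n,t}}\le 1$, taking conditional expectation given $\F_{\utn}$ and using the Gaussian moment bound $\EE[|W_t-W_{\utn}|^q(n^{-2}+(W_{\utn+1/n}-W_{\utn})^2+(W_{\utn+1/n}-W_{\utn})^4)]\le c(t-\utn)^{q/2}n^{-1}$ (valid since $t-\utn\le 1/n$) yields $\EE[Y|W_t-W_{\utn}|^q\ind_{A_{n,t}}(\eul_{n,\utn+1/n}-\eul_{n,\utn})^2]\le c\,\EE[Y(1+\eul_{n,\utn}^2)]\,(t-\utn)^{q/2}n^{-1}$. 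For the second, on $A_{n,t}$ we have $(\eul_{n,\utn}-\xi)^2\le(\eul_{n,t}-\eul_{n,\utn})^2\le c(1+\eul_{n,\utn}^2)((t-\utn)^2+(W_t-W_{\utn})^2+(W_t-W_{\utn})^4)$, and conditioning on $\F_{\utn}$ again gives $\EE[Y|W_t-W_{\utn}|^q\ind_{A_{n,t}}(\eul_{n,\utn}-\xi)^2]\le c\,\EE[Y(1+\eul_{n,\utn}^2)]\,(t-\utn)^{q/2+1}$. In both cases, conditioning on $\F_{\usn+1/n}$ and invoking Lemma~\ref{markov} together with the $L_2$-bound of Lemma~\ref{eulprop} gives $\EE[\eul_{n,\utn}^2\mid\F_{\usn+1/n}]\le c(1+\eul_{n,\usn+1/n}^2)$, hence $\EE[Y(1+\eul_{n,\utn}^2)]\le c(\EE[Y]+\EE[Y(\eul_{n,\usn+1/n}-\xi)^2])$ uniformly in $t$. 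Since $\int_0^1(t-\utn)^{q/2}\,dt$ and $\int_0^1(t-\utn)^{q/2+1}\,dt$ are of order $n^{-q/2}$ and $n^{-q/2-1}$ respectively, integrating the two bounds over $t\in[\usn+1/n,1]$ shows that each contribution is at most $c\,n^{-q/2-1}(\EE[Y]+\EE[Y(\eul_{n,\usn+1/n}-\xi)^2])$, which is \eqref{eee2}.

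\textbf{Main obstacle.} The difficulties here are organizational rather than conceptual. In \eqref{eee1} one must carefully check that the hypothesis $\utn-s\ge 1/n$ of Lemma~\ref{central} holds on the whole integration range, that the time reversal $r=2\utn-t$ keeps $r$ inside $[\usn+1/n,1]$ so that Lemma~\ref{markov} may be applied at the single gridpoint $\usn+1/n$, and that the Tonelli and joint-measurability steps used to pass from $\EE[Y\ind_{\{\cdots\}}]$ to $\EE[Y\,F(\eul_{n,\usn+1/n})]$ are justified. In \eqref{eee2} the only genuine computations are the Gaussian moment estimates for the conditional expectations given $\F_{\utn}$. Apart from these points, everything is a routine combination of Lemmas~\ref{markov}, \ref{occup}, \ref{central} and \ref{eulprop} with the linear-growth and boundedness facts \eqref{LG} and \eqref{bound}.
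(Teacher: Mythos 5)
Your proposal is correct, and for \eqref{eee1} it is essentially the paper's own argument: apply Lemma~\ref{central} on $[\usn+2/n,1]$ (after checking $\utn-s\ge 1/n$ there), use the reflection $t\mapsto \utn-(t-\utn)$ on each grid interval to convert the integral into one over $[\usn+1/n,1]$, then apply the Markov property of Lemma~\ref{markov} at the gridpoint $\usn+1/n$ and the occupation time bound of Lemma~\ref{occup}, exactly as in the paper. For \eqref{eee2} you take a mildly different route: the paper bounds $|\eul_{n,\utn+1/n}-\xi|\le|\eul_{n,\utn+1/n}-\eul_{n,t}|+|\eul_{n,t}-\eul_{n,\utn}|$ on $A_{n,t}$, drops the indicator, and applies a conditional Cauchy--Schwarz inequality given $\F_{\usn+1/n}$, controlling the fourth moment of the increments through Lemma~\ref{markov} and the $\sqrt\delta$-increment bound of Lemma~\ref{eulprop}; you instead split $(\eul_{n,\utn+1/n}-\xi)^2\le 2(\eul_{n,\utn+1/n}-\eul_{n,\utn})^2+2(\eul_{n,\utn}-\xi)^2$, use the sign change to get $(\eul_{n,\utn}-\xi)^2\le(\eul_{n,t}-\eul_{n,\utn})^2$ on $A_{n,t}$, and compute explicit one-step conditional Gaussian moments given $\F_\utn$ via \eqref{LG} and \eqref{bound}, only afterwards conditioning on $\F_{\usn+1/n}$ to reduce $\EE[Y(1+\eul_{n,\utn}^2)]$ to the right-hand side. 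Both executions rest on the same mechanism (the sign change converts $(\eul_{n,\cdot}-\xi)^2$ into squared one-step increments of mean order $1/n$, combined with the Markov property and uniform moment bounds) and yield the same rate $n^{-q/2-1}$; your version is a bit more hands-on with the scheme's one-step formula, the paper's is slightly shorter because a single application of Lemma~\ref{eulprop} handles both increment terms at once. I see no gap in either part.
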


\begin{proof}
For $s\in[0,1]$ we use $\mathcal Y_s$ to denote the set of all  real-valued, non-negative, $\F_s$-measurable random variables.

We first prove~\eqref{eee1}. Note that if $t\geq\usn+2/n$ then  $\utn-1/n \ge \usn+1/n \ge s$. By Lemma~\ref{central} we thus obtain that there exists $c\in (0,\infty)$ such that for all $n\in\N$, $s\in [0,1-1/n)$ and $Y\in\mathcal Y_s$, 
\begin{equation}\label{key01}
\begin{aligned}
& \int_{\usn+2/n}^1\EE \bigl[Y\cdot |W_t-W_{\utn}|^q\cdot \ind_{ A_{n,t}}\bigr]\, dt\\
& \qquad\le c\cdot \frac{\EE[Y]}{n^{q/2+1}} +\frac{ c}{n^{q/2}} \int_{\R} |z|^q \cdot  e^{-\frac{z^2}{2}} \cdot \int_{\usn+2/n}^1 \EE\bigl[Y\cdot \ind_{ \{|\widehat X_{n, \utn-(t-\utn)}-\xi| \le \tfrac{c}{\sqrt{n}} (1+|z|)\}}\bigr]\,
\, dt\,dz \\
& \qquad =c\cdot\frac{\EE[Y]}{n^{q/2+1}} + \frac{c}{n^{q/2}} \int_{\R}  |z|^q\cdot  e^{-\frac{z^2}{2}}\cdot  \int_{\usn+1/n}^{1-1/n} \EE\bigl[Y\cdot \ind_{ \{|\widehat X_{n, t}-\xi| \le \tfrac{c}{\sqrt{n}} (1+|z|)\}}\bigr]\,
\, dt\,dz.
\end{aligned}
\end{equation}
 Using the fact that
for all $n\in\N$ and  $s\in [0,1-1/n)$
 every $Y\in\mathcal Y_s$ is $\F_{\usn+1/n}$-measurable and employing the first part of Lemma~\ref{markov} we obtain that for all  $n\in\N$, $s\in [0,1-1/n)$, $Y\in\mathcal Y_s$ and $z\in\R$,
\begin{equation}\label{key02}
\begin{aligned}
& \int_{\usn+1/n}^{1-1/n} \EE\bigl[Y\cdot \ind_{ \{|\widehat X_{n, t}-\xi| \le \tfrac{c}{\sqrt{n}} (1+|z|)\}}\bigr]\,
\, dt \\
& \qquad\qquad \qquad = \EE\Bigl[Y\cdot \EE\Bigl[\int_{\usn+1/n}^{1-1/n} \ind_{\{|\widehat X_{n,t}-\xi| \le \tfrac{c}{\sqrt{n}} (1+|z|)\}}\, dt\Bigl|\eul_{n,\usn+1/n}\Bigr]\Bigr].
\end{aligned}
\end{equation}
Moreover, by the second part of Lemma~\ref{markov} and by Lemma~\ref{occup},  there exists $c_1\in(0, \infty)$ such that  for all $n\in\N$, $s\in [0,1-1/n)$, $Y\in\mathcal Y_s$, $z\in\R$ and $P^{\eul_{n,\usn+1/n}}$-almost all $x\in\R$, 
\begin{equation}\label{key03}
\begin{aligned}
& \EE\Bigl[\int_{\usn+1/n}^{1-1/n} \ind_{\{|\widehat X_{n,t}-\xi| \le \tfrac{c}{\sqrt{n}} (1+|z|)\}}\, dt\Bigl|\eul_{n,\usn+1/n}=x\Bigr]\\
 & \qquad= \EE\Bigl[\int_{0}^{1-2/n-\usn} \ind_{\{|\widehat X^x_{n,t}-\xi| \le \tfrac{c}{\sqrt{n}} (1+|z|)\}}\, dt\Bigr] \le c_1\cdot (1+x^2)\cdot \Bigl( \frac{c}{\sqrt{n}} \cdot(1+|z|) + \frac{1}{\sqrt{n}}\Bigr).
\end{aligned}
\end{equation}
Combining~\eqref{key02} and~\eqref{key03} 
and using the fact that for all $a,b\in\R$,
\[
1+a^2\leq 2\,(1+(a-b)^2)\cdot (1+b^2),
\]
we conclude that  for  all $n\in\N$, $s\in [0,1-1/n)$, $Y\in\mathcal Y_s$ and $z\in\R$,
\begin{equation}\label{key04}
\begin{aligned}
& \int_{\usn+1/n}^{1-1/n} \EE\bigl[Y\cdot \ind_{ \{|\widehat X_{n, t}-\xi| \le \tfrac{c}{\sqrt{n}} (1+|z|)\}}\bigr]\, dt \\ 
& \qquad\qquad\qquad\qquad  \le \tfrac{c_1(c+1)}{\sqrt{n}}\cdot (1+|z|)\cdot \EE\bigl[Y\cdot (1 + \eul_{n,\usn+1/n}^2)\bigr]\\
 & \qquad\qquad\qquad\qquad  \le \tfrac{2c_1(c+1)}{\sqrt{n}}\cdot (1+\xi^2)\cdot (1+|z|)\cdot \bigl(\EE[Y] + \EE\bigl[Y\cdot (\eul_{n,\usn+1/n}-\xi)^2\bigr]\bigr).
\end{aligned}
\end{equation}
Inserting~\eqref{key04} into~\eqref{key01} and observing that $\int_\R (1+|z|) \cdot |z|^q \cdot e^{-z^2/2}\,dz < \infty$ completes the proof of~\eqref{eee1}. 

We next prove~\eqref{eee2}.
 Clearly, for all 
$n\in\N$, $s\in [0,1-1/n)$,
$t\in[\usn+1/n, 1]$ and all  $\omega\in A_{n,t}$ we have
\begin{align*}
|\eul_{n,\utn+1/n}(\omega)-\xi| &\le |\eul_{n,\utn+1/n}(\omega)-\eul_{n,t}(\omega)| + |\eul_{n,t}(\omega)-\xi| \\
&\le |\eul_{n,\utn+1/n}(\omega)-\eul_{n,t}(\omega)| + |\eul_{n,t}(\omega)-\eul_{n,\utn}(\omega)|.
\end{align*}
 Using the fact that 
for all $n\in\N$ and  $s\in [0,1-1/n)$
every $Y\in\mathcal Y_s$ is $\F_{\usn+1/n}$-measurable and employing the   H\"older inequality  we therefore obtain that for all $n\in\N$, all $s\in [0,1-1/n)$, all $Y\in\mathcal Y_s$ and all $t\in[\usn+1/n,1]$,
\begin{equation}\label{key05}
\begin{aligned}
&\EE\bigl[Y\cdot |W_t-W_{\utn}|^q\cdot \ind_{ A_{n,t}} \cdot (\eul_{n,\utn+1/n}-\xi)^2\bigr] \\
 &  \qquad\quad  \le \EE \bigl[Y\cdot |W_t-W_{\utn}|^q\cdot (|\eul_{n,\utn+1/n}-\eul_{n,t}| + |\eul_{n,t}-\eul_{n,\utn}|)^2 \bigr]\\ 
 &  \qquad\quad  = \EE \bigl[Y\cdot \EE\bigl[|W_t-W_{\utn}|^q\cdot (|\eul_{n,\utn+1/n}-\eul_{n,t}| + |\eul_{n,t}-\eul_{n,\utn}|)^2|\F_{\usn+1/n}\bigr] \bigr]\\ 
& \qquad\quad  \leq \EE\bigl[ Y\cdot\bigl(\EE\bigl[(W_t-W_{\utn})^{2q}|\F_{\usn+1/n}\bigr]\bigr)^{1/2}\\
&\hspace{3cm} \cdot \bigl(\EE\bigl[(|\eul_{n,\utn+1/n}-\eul_{n,t}| + |\eul_{n,t}-\eul_{n,\utn}|)^4|\F_{\usn+1/n}\bigr]\bigr)^{1/2}\bigr].
\end{aligned}
\end{equation}
If $t\ge \usn+1/n$ then $\utn \ge \usn+1/n$. Hence, there exists $c\in (0,\infty)$ such that for all  $n\in\N$, all $s\in [0,1-1/n)$ and all $t\in[\usn+1/n,1]$,
\begin{equation}\label{sd}
\EE\bigl[(W_t-W_{\utn})^{2q}|\F_{\usn+1/n}\bigr]=\EE\bigl[(W_t-W_{\utn})^{2q}\bigr]\leq c/n^q.
\end{equation}
Moreover, the first part of Lemma~\ref{markov} implies that for
all $n\in\N$, all $s\in [0,1-1/n)$ and 
all $t\in[\usn+1/n,1]$ it holds
 $\PP\text{-a.s.}$ that
\begin{equation}\label{ds}
\begin{aligned}
&\EE\bigl[(|\eul_{n,\utn+1/n}-\eul_{n,t}| + |\eul_{n,t}-\eul_{n,\utn}|)^4|\F_{\usn+1/n}\bigr]\\
&\qquad\qquad=\EE\bigl[(|\eul_{n,\utn+1/n}-\eul_{n,t}| + |\eul_{n,t}-\eul_{n,\utn}|)^4| \eul_{n,\usn+1/n}\bigr].
\end{aligned}
\end{equation}
By the second part of Lemma~\ref{markov} and by Lemma~\ref{eulprop} we obtain that there exist $c_1, c_2\in(0, \infty)$ such that  for all $n\in\N$, all $s\in [0,1-1/n)$, all $t\in [\usn+1/n, 1]$ and
$\PP ^{\eul_{n,\usn+1/n}} $-almost all
 $x\in\R$,
\begin{equation}\label{key06}
\begin{aligned}
&  \EE\bigl[(|\eul_{n,\utn+1/n}-\eul_{n,t}| + |\eul_{n,t}-\eul_{n,\utn}|)^4\bigl| \eul_{n,\usn+1/n} = x\bigr]\\
& \qquad \quad = \EE[(|\eul^x_{n,\utn-\usn}-\eul^x_{n,t-\usn-1/n}| + |\eul^x_{n,t-\usn-1/n}-\eul^x_{\utn-\usn-1/n}|)^4\bigr] 
\\& \qquad\quad \le  c_1\cdot (1+x^4)\cdot 1/n^2
\le  c_2\cdot (1+(x-\xi)^4)\cdot 1/n^2.
\end{aligned}
\end{equation}
It follows from~\eqref{key05}, \eqref{sd}, \eqref{ds} and~\eqref{key06} that there exist $c\in(0, \infty)$ such that  for all $n\in\N$, all $s\in [0,1-1/n)$ and all $Y\in\mathcal Y_s$,
\begin{equation}\label{key07}
\begin{aligned}
& \int_{\usn+1/n}^1\EE\bigl[Y\cdot |W_t-W_{\utn}|^q\cdot \ind_{ A_{n,t}} \cdot (\eul_{n,\utn+1/n}-\xi)^2\bigr]\, dt \\
& \qquad\qquad  \le \frac{c}{n^{q/2+1}}
\cdot \int_{\usn+1/n}^1 \EE\bigl[Y\cdot (1+(\eul_{n,\usn+1/n}-\xi)^2)\bigr]\, dt\\
& \qquad\qquad \le \frac{c}{n^{q/2+1}}
\cdot \bigl(\EE[Y] + \EE\bigl[Y\cdot (\eul_{n,\usn+1/n}-\xi)^2\bigr]\bigr),
\end{aligned}
\end{equation}
which finishes the proof of~\eqref{eee2} and completes the proof of the lemma. 
\end{proof}

We are ready to establish the main result in this section, which provides a $p$-th mean estimate of the time 
average of  $|\eul_{n,t}-\eul_{n,\utn}|^q$
subject to a sign change of $\eul_{n,t}-\xi$ relative to the sign of $\eul_{n,\utn}-\xi$.

\begin{prop}\label{prop1}
Let $\xi\in\R$ satisfy $\sigma(\xi)\not=0$ and let
\[
A_{n,t} =\{(\eul_{n,t}-\xi)\cdot(\eul_{n, \utn}-\xi)\leq 0\}
\]
for all $n\in\N$ and $t\in[0,1]$.  
Then for all  $p,q\in [1,\infty)$  there exists  $c\in(0, \infty)$ such that for all $n\in\N$, 
\begin{equation}\label{l33a}
\EE\Bigl[\Bigl|\int_0^1  |\eul_{n,t}-\eul_{n,\utn}|^q\cdot \ind_{A_{n,t}}\,dt\Bigr|^p\Bigr]^{1/p}\leq \frac{c}{n^{(q+1)/2}}. 
\end{equation}
\end{prop}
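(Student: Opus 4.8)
The plan is to reduce the $p$-th moment estimate \eqref{l33a} to a statement about time-averaged conditional moments of the form handled by Lemma~\ref{key}, and then iterate. First I would fix $p,q\in[1,\infty)$ and, using the elementary bound $|\eul_{n,t}-\eul_{n,\utn}|^q \le c\cdot(1/n^{q/2} + |W_t-W_{\utn}|^q)\cdot(1+\|\eul_n\|_\infty)^q$ coming from the defining recursion together with \eqref{LG}, \eqref{bound}, split the integrand. The contribution of the deterministic part $1/n^{q/2}$ is controlled directly by Lemma~\ref{occup} applied to $\xi$ (since $\sigma(\xi)\neq0$), which gives $\int_0^1\PP(A_{n,t})\,dt \le c/\sqrt n$ after noting $A_{n,t}\subset\{|\eul_{n,\utn}-\xi|\le|\eul_{n,t}-\eul_{n,\utn}|\}$ and that on the event $\{\max|Z_i|\le 2\sqrt{\ln n}\}$ one has $|\eul_{n,t}-\eul_{n,\utn}|\lesssim (1+\|\eul_n\|_\infty)/\sqrt n$; combined with the tail estimate for the complement this yields the desired $n^{-(q+1)/2}$ after taking $\|\eul_n\|_\infty^{qp}$ moments via Lemma~\ref{eulprop}. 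The genuinely delicate term is $\int_0^1 |W_t-W_{\utn}|^q\cdot(1+\|\eul_n\|_\infty)^q\cdot\ind_{A_{n,t}}\,dt$.

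For that term the key idea is to peel off the factor $(1+\|\eul_n\|_\infty)^q$ by Hölder and then estimate, for a suitable conjugate exponent, the $p'$-th moment of $\int_0^1 |W_t-W_{\utn}|^q\ind_{A_{n,t}}\,dt$. Raising the integral to the power $p'\in\N$ (it suffices to treat integer powers after enlarging $p$) turns it into a multiple time integral $\int_{[0,1]^{p'}} \EE[\prod_{j=1}^{p'} |W_{t_j}-W_{\ut_{j,n}}|^q\ind_{A_{n,t_j}}]\,d(t_1,\dots,t_{p'})$, and after symmetrizing we may assume $t_1\le\dots\le t_{p'}$. The plan is then to integrate out the coordinates one at a time, starting from the largest: conditioning on $\F_{\underline{t_{p'-1}}_n+1/n}$ (or rather on the appropriate earlier grid point so that the increments feeding into the last factor are independent of the past), we apply Lemma~\ref{key} — specifically \eqref{eee1} when the running product does not yet carry a $(\eul_{n,\cdot}-\xi)^2$ weight, and \eqref{eee2} once it does — with $Y$ equal to the product of the first $p'-1$ factors (which is $\F_s$-measurable for the relevant $s$). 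Each application of \eqref{eee1} converts one integration into a factor $c\,n^{-(q+1)/2}\cdot(\EE[Y]+\EE[Y(\eul_{n,\us_n+1/n}-\xi)^2])$; the extra weight $(\eul_{n,\us_n+1/n}-\xi)^2$ produced in the second summand is precisely the input that \eqref{eee2} consumes at the next step, producing a factor $c\,n^{-q/2-1}$ without generating a new weight. Thus the recursion closes: each stripped coordinate contributes at least $n^{-(q+1)/2}$ (the worst case), so after $p'$ steps we gain $n^{-p'(q+1)/2}$, and taking the $p'$-th root gives $n^{-(q+1)/2}$ as required; the leftover boundary strips of width $O(1/n)$ near each $t_j$ are negligible of higher order.

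I expect the main obstacle to be the bookkeeping of the iterated conditioning: one must verify at each stage that the accumulated prefactor $Y$ (a product of $|W_{t_j}-W_{\ut_{j,n}}|^q$'s and possibly one squared-distance weight) is indeed non-negative and measurable with respect to the sigma-algebra required by Lemma~\ref{key}, which forces careful attention to the ordering of the $t_j$ and to the "gap of $1/n$" hypotheses $\ut_n - s\ge 1/n$ and $s\in[0,1-1/n)$; this is where splitting $[0,1]^{p'}$ into the region where consecutive $t_j$ are well-separated (at least $2/n$ apart) versus the lower-dimensional "diagonal" strips becomes essential. A secondary technical point is controlling the $\|\eul_n\|_\infty$ factor: since it is not $\F_s$-measurable, one should first apply Hölder in the time integral (or a Cauchy–Schwarz-type splitting) to separate $\EE[\|\eul_n\|_\infty^{\text{large}}]$, bounded uniformly in $n$ by Lemma~\ref{eulprop}, from the pure increment-times-indicator expectation to which the Lemma~\ref{key} machinery applies. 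Once these measurability and separation issues are handled, the estimate \eqref{l33a} follows by assembling the bounds for the two terms and passing from integer to arbitrary real $p$ by monotonicity of $L^p$-norms on the probability space.
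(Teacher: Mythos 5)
Your treatment of the main term is essentially the paper's own argument: reducing to integer $p$, writing the $p$-th power as an ordered multiple time integral, splitting off the strip of width $O(1/n)$ next to the largest coordinate, and then alternating between \eqref{eee1} and \eqref{eee2} of Lemma~\ref{key} with $Y$ equal to the product of the earlier factors is exactly the paper's double recursion for $a_{n,r}$ and the weighted quantity $b_{n,r}$ in the proof of \eqref{prop01}. Peeling off $(1+\|\eul_n\|_\infty)^q$ by H\"older before running this induction is a harmless variation, since the bound on $\EE\bigl[\bigl|\int_0^1|W_t-W_{\utn}|^q\ind_{A_{n,t}}\,dt\bigr|^{p'}\bigr]$ scales like $n^{-(q+1)p'/2}$ for every $p'$. (One small inaccuracy: \eqref{eee2} does reproduce the weight $(\eul_{n,\usn+1/n}-\xi)^2$ at the earlier grid point, so the weighted recursion has to be iterated down to a base case rather than terminating after one step; this closes exactly as in the paper.)

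The genuine gap is in the reduction from $|\eul_{n,t}-\eul_{n,\utn}|^q$ to the Brownian increment. First, the pointwise bound $|\eul_{n,t}-\eul_{n,\utn}|^q\le c\,(n^{-q/2}+|W_t-W_{\utn}|^q)(1+\|\eul_n\|_\infty)^q$ is not valid: the one-step recursion produces a term of order $|W_t-W_{\utn}|^{2q}$, which is dominated neither by $n^{-q/2}$ nor by $|W_t-W_{\utn}|^q$ on large increments (fixable, e.g.\ by carrying the $2q$-power as a separate term, for which the same induction gives the even better rate $n^{-(2q+1)/2}$). Second, and more seriously, your ``deterministic part'' $n^{-q/2}\,\ind_{A_{n,t}}$ requires, for $p>1$ and after H\"older, a high-moment occupation estimate of the form $\EE\bigl[\bigl(\int_0^1\ind_{A_{n,t}}\,dt\bigr)^{p}\bigr]\lesssim n^{-p/2}$. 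Lemma~\ref{occup} only provides the \emph{first} moment of an occupation time, and only for a deterministic radius $\eps$, whereas your inclusion produces the random radius $c(1+\|\eul_n\|_\infty)/\sqrt n$; so ``controlled directly by Lemma~\ref{occup}'' does not go through for $p>1$ --- you would in effect have to rerun the whole Lemma~\ref{central}/Lemma~\ref{key} machinery for the bare indicator as well. The paper sidesteps this entirely: on $A_{n,t}$ one has $|\eul_{n,\utn}|\le|\xi|+|\eul_{n,t}-\eul_{n,\utn}|$, so the one-step bound \eqref{v1} yields \eqref{z232}, namely $|\eul_{n,t}-\eul_{n,\utn}|\cdot\ind_{A_{n,t}}\le c\,(|W_t-W_{\utn}|\cdot\ind_{A_{n,t}}+R_{n,t})$ with a remainder $R_{n,t}$ whose $L_r$-norms are $O(1/n)$; hence every contribution except $|W_t-W_{\utn}|^q\ind_{A_{n,t}}$ is of order $n^{-q}\le n^{-(q+1)/2}$ and needs no occupation-time input at all. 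Equivalently, keeping the exact $n^{-q}$ (rather than $n^{-q/2}$) coming from the scheme's recursion would make your deterministic term harmless without any indicator. As written, however, that step of your argument fails for $p>1$.
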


\begin{proof}
Clearly, it suffices to consider only the case $p\in\N$.
Fix $q\in [1,\infty)$. For $n,p\in\N$ put
\[
a_{n,p} = \EE\Bigl[\Bigl|\int_0^1  |W_t-W_{\utn}|^q\cdot \ind_{A_{n,t}}\, dt\Bigr|^p\Bigr].
\]
 We prove by induction on $p$ that for every $p\in\N$ there exists $c\in (0,\infty)$ such that for all $n\in\N$,
\begin{equation}\label{prop01}
a_{n,p} \le \frac{c}{n^{(q+1)p/2}}.
\end{equation}

First, consider the case $p=1$. Using~\eqref{eee1} in Lemma~\ref{key} with $s=0$ and $Y=1$ we 
obtain that there exists $c\in (0,\infty)$ such that for all $n\geq 2$,
\begin{align*}
a_{n,1} &\leq \int_{2/n}^1 \EE\bigl[ |W_t-W_{\utn}|^q\cdot \ind_{A_{n,t}}\bigr]\, dt +\int_0^{2/n} \EE\bigl[ |W_t-W_{\utn}|^q\bigr]\, dt\\
& \le \frac{c}{n^{(q+1)/2}}\cdot\bigl (1 + \EE\bigl[\bigl(\eul_{n,1/n}-\xi\bigr)^2 \bigr]\bigr)+\frac{c}{n^{q/2+1}}\\
& \le \frac{2c}{n^{(q+1)/2}}\cdot \bigl(1+\xi^2+\EE\bigl[\eul_{n,1/n}^2\bigr]\bigr).
\end{align*}
Employing Lemma~\ref{eulprop} we thus conclude that~\eqref{prop01} holds for $p=1$.

Next, let $r\in\N$ 
and assume that~\eqref{prop01} holds for all $p\in\{1, \ldots, r\}$. For $n\in\N$ and $t\in[0,1]$ put
\[
Y_{n,t}=|W_{t}-W_{\utn}|^q.
\]
We then have for all $n\in\N$,
\begin{equation}\label{i1}
\begin{aligned}
a_{n,r+1}  &= (r+1)!\cdot \int_0^1\int_{t_1}^1\ldots \int_{t_r}^1 \EE\Bigl[\prod_{i=1}^{r+1}Y_{n,t_i}\cdot \ind_{A_{n,t_i}}\Bigr]\,dt_{r+1}\, \ldots \, dt_1.
\end{aligned}
\end{equation}
For $n\in\N$ and $0\le t_1\le \ldots \le  t_r\le 1$  we put
\begin{align*}
J_{1,n}(t_1,\dots,t_r) & = \int_{t_r}^{(\utrn+2/n) \wedge 1} \EE\Bigl[\prod_{i=1}^{r+1}Y_{n,t_i}\cdot \ind_{A_{n,t_i}}\Bigr]\,dt_{r+1},\\
J_{2,n}(t_1,\dots,t_r) & = \int_{(\utrn+2/n) \wedge 1}^1 \EE\Bigl[\prod_{i=1}^{r+1}Y_{n,t_i}\cdot \ind_{A_{n,t_i}}\Bigr]\,dt_{r+1}.
\end{align*}
By the H\"older inequality
there exists $c\in (0,\infty)$ such that  for all $n\in\N$ and all $0\le t_1 \le \dots \le  t_{r+1}\le 1$,
\begin{equation}\label{i3}
\begin{aligned}
\EE\Bigl[\prod_{i=1}^{r+1}Y_{n,t_i}\cdot \ind_{A_{n,t_i}}\Bigr] & \le
\EE\Bigl[\Bigl(Y_{n,t_{r+1}}\cdot \prod_{i=1}^{r}Y_{n,t_i}^{1/r}\Bigr)\cdot\Bigl(\prod_{i=1}^{r}Y_{n,t_i}^{(r-1)/r}\cdot \ind_{A_{n,t_i}}\Bigr) \Bigr] \\
& \le \EE\Bigl[Y_{n,t_{r+1}}^r\cdot \prod_{i=1}^{r}Y_{n,t_i}\Bigr]^{1/r} \cdot\EE\Bigl[\prod_{i=1}^{r}Y_{n,t_i}\cdot \ind_{A_{n,t_i}}\Bigr]^{(r-1)/r}  \\
& \le \frac{c}{n^q}\cdot \EE\Bigl[\prod_{i=1}^{r}Y_{n,t_i}\cdot \ind_{A_{n,t_i}}\Bigr]^{(r-1)/r}.
\end{aligned}
\end{equation}
Hence there exists $c\in (0,\infty)$ such that for all $n\in\N$ and all $0\le t_1 \le \dots \le t_r\le 1$,
\begin{equation}\label{i4}
\begin{aligned}
J_{1,n}(t_1,\dots,t_r) & \le \frac{c}{n^{q+1}}\cdot \EE\Bigl[\prod_{i=1}^{r}Y_{n,t_i}\cdot \ind_{A_{n,t_i}}\Bigr]^{(r-1)/r}.
 \end{aligned}
\end{equation}
Clearly,   for all $n\in\N$ and all $0\le t_1\le \ldots \le  t_r\le 1$ with $t_r \ge 1-1/n$ we have
\begin{equation}\label{TTTTT}
J_{2,n}(t_1,\dots,t_r)=0.
\end{equation}
Furthermore, if $t_r\in [0,1-1/n)$ then $(\utrn+2/n) \wedge 1= \utrn+2/n$, and by applying~\eqref{eee1} in Lemma~\ref{key} with $s=t_r$ and $Y= \prod_{i=1}^{r}Y_{n,t_i}\cdot \ind_{A_{n,t_i}}$ we obtain that there exists $c\in (0,\infty)$ such that for all $n\in\N$ and all $0\le t_1\le \ldots \le  t_r\le 1$,
\begin{equation}\label{i2}
J_{2,n}(t_1,\dots,t_r) \le \frac{c}{n^{(q+1)/2}} \cdot \Bigl(\EE\Bigl[\prod_{i=1}^r Y_{n,t_i}\cdot \ind_{A_{n,t_i}}\Bigr] +\EE\Bigl[\prod_{i=1}^r Y_{n,t_i}\cdot \ind_{A_{n,t_i}}\cdot (\eul_{n,\utrn+1/n}-\xi)^2\Bigr]\Bigr).
\end{equation}
Combining \eqref{i4} to \eqref{i2} with~\eqref{i1} 
and employing the induction hypothesis we conclude that there exists $c_1, c_2 \in (0,\infty)$ such that for all $n\in\N$,
\begin{equation}\label{i5}
\begin{aligned}
a_{n,r+1}  
&  \le c_1\cdot\Bigl( \frac{a_{n,r}}{n^{(q+1)/2}} + \frac{b_{n,r}}{n^{(q+1)/2}} + \frac{a_{n,r}^{(r-1)/r}}{n^{q+1}}\Bigr) \le  c_2\cdot\Bigl(\frac{1}{n^{(q+1)(r+1)/2}} + \frac{b_{n,r}}{n^{(q+1)/2}}\Bigr),
 \end{aligned}
\end{equation}
where
\[
b_{n,r}  =  \int_0^1\int_{t_1}^1\ldots \int_{t_{r-1}}^1  \EE\Bigl[\Bigl(\prod_{i=1}^r Y_{n,t_i}\cdot \ind_{A_{n,t_i}}\Bigr)\cdot (\eul_{n,\utrn+1/n}-\xi)^2\Bigr]\,  dt_r\,\ldots \, dt_1.
\]

We proceed with estimating the term $b_{n,r}$.
Using~\eqref{eee2} in Lemma~\ref{key} with $s=0$ and $Y=1$ as well as Lemma~\ref{eulprop} we obtain that  there exist $c_1,c_2\in (0,\infty)$ such that  for all $n\in\N$,
\[
\int_{1/n}^1 \EE\bigl[ Y_{n,t_1}\cdot\ind_{A_{n,t_1}}\cdot (\eul_{n,\underline{t_1}_n+1/n} - \xi)^2\bigr]\, dt_1 \le \frac{c_1}{n^{q/2+1}}\cdot \bigl(1+ \EE[(\eul_{n,\underline{t_1}_n+1/n} - \xi)^2]\bigr) \le \frac{c_2}{n^{q/2+1}}.
\]
Furthermore, by employing Lemma~\ref{eulprop} again we see that there exist $c\in (0,\infty)$ such that for all $n\in\N$,
\begin{align*}
& \int_0^{1/n}\EE\bigl[ Y_{n,t_1}\cdot\ind_{A_{n,t_1}}\cdot (\eul_{n,\underline{t_1}_n+1/n} - \xi)^2\bigr]\, dt_1 \\
& \qquad\qquad\qquad \le \int_0^{1/n}\EE\bigl[ Y_{n,t_1}^2]^{1/2}\cdot \EE [(\eul_{n,\underline{t_1}_n+1/n} - \xi)^4\bigr]^{1/2}\, dt_1\le \frac{c}{n^{q/2+1}}.
\end{align*}
It follows that there exist $c\in (0,\infty)$ such that for all $n\in\N$,
\begin{equation}\label{i5c}
b_{n,1} \le \frac{c}{n^{q/2+1}}.
\end{equation}

Next, we assume that $r\ge 2$, and for $n\in\N$ and $0\le t_1\le \ldots \le  t_{r-1}\le 1$ we put
\begin{align*}
K_{1,n}(t_1,\dots,t_{r-1}) & = \int_{t_{r-1}}^{(\underline{t_{r-1}}_n+1/n) \wedge 1} \EE\Bigl[\Bigl(\prod_{i=1}^{r}Y_{n,t_i}\cdot \ind_{A_{n,t_i}}\Bigr)\cdot (\eul_{n,\utrn+1/n}-\xi)^2\Bigr]\,dt_{r},\\
K_{2,n}(t_1,\dots,t_{r-1}) & = \int_{(\underline{t_{r-1}}_n+1/n) \wedge 1}^1 \EE\Bigl[\Bigl(\prod_{i=1}^{r}Y_{n,t_i}\cdot \ind_{A_{n,t_i}}\Bigr)\cdot (\eul_{n,\utrn+1/n}-\xi)^2\Bigr]\,dt_{r}.
\end{align*}
Proceeding similarly
 to 
~\eqref{i3} and employing Lemma~\ref{eulprop} we conclude that there exists $c\in (0,\infty)$ such that for all $n\in\N$ and all $0\le t_1\le \ldots  \le t_r\le 1$,
\begin{equation}\label{i7}
\begin{aligned} 
& \EE\Bigl[\Bigl(\prod_{i=1}^r Y_{n,t_i}\cdot \ind_{A_{n,t_i}}\Bigr)
\cdot (\eul_{n,\underline{t_{r}}_n+1/n}-\xi)^2\Bigr]\\
& \qquad \le  \EE\Bigl[Y_{n,t_r}^{r-1}\cdot \Bigl(\prod_{i=1}^{r-1} Y_{n,t_i}\Bigr)
\cdot (\eul_{n,\underline{t_{r}}_n+1/n}-\xi)^{2(r-1)}
\Bigr]^{\tfrac{1}{r-1}}\cdot\EE\Bigl[\prod_{i=1}^{r-1} Y_{n,t_i}\cdot \ind_{A_{n,t_i}}\Bigr]^{\frac{r-2}{r-1}}\\
& \qquad \le \frac{c}{n^{q}}\cdot \EE\Bigl[\prod_{i=1}^{r-1} Y_{n,t_i}\cdot \ind_{A_{n,t_i}}\Bigr]^{\frac{r-2}{r-1}}.
 \end{aligned}
\end{equation}
Hence there exists $c\in (0,\infty)$ such that for all $n\in\N$ and all $0\le t_1 \le \dots \le t_{r-1}\le 1$,
\begin{equation}\label{i8}
\begin{aligned}
K_{1,n}(t_1,\dots,t_{r-1}) & \le \frac{c}{n^{q+1}}\cdot \EE\Bigl[\prod_{i=1}^{r-1}Y_{n,t_i}\cdot \ind_{A_{n,t_i}}\Bigr]^{\frac{r-2}{r-1}}.
 \end{aligned}
\end{equation}
Clearly,   for all $n\in\N$ and all $0\le t_1\le \ldots \le  t_{r-1}\le 1$ with $t_{r-1} \ge 1-1/n$ we have
\begin{equation}\label{JJJL}
 K_{2,n}(t_1,\dots,t_{r-1})=0.
\end{equation}
Furthermore,
if $t_{r-1}\in [0,1-1/n)$ then $(\underline{t_{r-1}}_n+1/n) \wedge 1= \underline{t_{r-1}}_n+1/n$, and by applying~\eqref{eee2} in Lemma~\ref{key} with $s=t_{r-1}$ and $Y= \prod_{i=1}^{r-1}Y_{n,t_i}\cdot \ind_{A_{n,t_i}}$ we obtain that there exists $c\in (0,\infty)$ such that for all $n\in\N$ and all $0\le t_1 \le \dots \le t_{r-1}\le 1$,
\begin{equation}\label{i6}
\begin{aligned}
& K_{2,n}(t_1,\dots,t_{r-1})\\
& \qquad \le \frac{c}{n^{q/2+1}} \cdot \Bigl(\EE\Bigl[\prod_{i=1}^{r-1} Y_{n,t_i}\cdot \ind_{A_{n,t_i}}\Bigr] +\EE\Bigl[\Bigr(\prod_{i=1}^{r-1} Y_{n,t_i}\cdot \ind_{A_{n,t_i}}\Bigr)\cdot (\eul_{n,\underline{t_{r-1}}_n+1/n}-\xi)^2\Bigr]\Bigr).
\end{aligned}
\end{equation}
Using 
\eqref{i8} to \eqref{i6}
and employing the induction hypothesis we thus conclude that there exist $c_1,c_2\in (0,\infty)$ such that for all $n\in\N$, 
\begin{equation}\label{i9}
\begin{aligned}
b_{n,r} & \le c_1\cdot\Bigl(
\frac{a_{n,r-1}^{(r-2)/(r-1)}}{n^{q+1}} + \frac{a_{n,r-1}}{n^{q/2+1}}  + \frac{b_{n,r-1}}{n^{q/2+1}} \Bigr)\\
& \le \frac{c_2}{n^{(q+1)r/2}} + \frac{c_1}{n^{q/2+1}} \cdot b_{n,r-1} \le \frac{c_2}{n^{(q+1)r/2}} + \frac{c_1}{n^{(q+1)/2}} \cdot b_{n,r-1}.
 \end{aligned}
\end{equation}

Using~\eqref{i5c} and~\eqref{i9} we obtain by induction  that there exist $c_1,c_2\in (0,\infty)$ such that  for all $n\in \N$,
\begin{equation}\label{i10}
\begin{aligned}
b_{n,r} & \le \frac{c_1}{n^{(q+1)r/2}} + \frac{c_1}{n^{(q+1)(r-1)/2}}\cdot b_{n,1} \le \frac{c_2}{n^{(q+1)r/2}}.
 \end{aligned}
\end{equation}
Inserting the estimate~\eqref{i10} into~\eqref{i5} yields that there 
exists $c\in (0,\infty)$ 
such that  for all $n\in \N$,
\begin{equation}\label{i11}
\begin{aligned}
a_{n,r+1}  
\le \frac{c}{n^{(q+1)(r+1)/2}},
 \end{aligned}
\end{equation}
which completes the proof of~\eqref{prop01}.

We turn to the proof of~\eqref{l33a}. By the definition of $\eul_n$ and by~\eqref{LG} and~\eqref{bound} we see that there exists $c\in (0,\infty)$ such that for all $n\in \N $ and all $t\in [0,1]$,
\begin{equation}\label{v1}
|\eul_{n,t}- \eul_{n,\utn}| \le c\cdot (1+|\eul_{n,\utn}|)\cdot (1/n + |W_t-W_{\utn}| + |W_t-W_{\utn}|^2).
\end{equation} 
Using the fact that for all $n\in\N$, all $t\in [0,1]$ and all $\omega\in A_{n,t}$ we have
\[
|\eul_{n,\utn}(\omega)| \le |\xi| +  |\eul_{n,\utn}(\omega)-\xi| \le |\xi| +  |\eul_{n,t}(\omega) -\eul_{n,\utn}(\omega)|
\]
 we therefore conclude that
 there exists $c\in (0,\infty)$ such that for all $n\in \N $ and all $t\in [0,1]$,
\begin{equation}\label{z232}
|\eul_{n,t}- \eul_{n,\utn}|\cdot \ind_{A_{n,t}} \le c\cdot ( |W_t-W_{\utn}|\cdot \ind_{A_{n,t}} + R_{n,t}),
\end{equation}
where
\begin{align*}
R_{n,t} & = (1+|\eul_{n,t}-\eul_{n,\utn}|)\cdot (1/n + |W_t-W_{\utn}|^2) + |\eul_{n,t}-\eul_{n,\utn}|\cdot |W_t-W_{\utn}|.
\end{align*}
Employing Lemma~\ref{eulprop} we obtain that 
for every $r\in \N$ 
there exists $c\in (0,\infty)$ such that for all $n\in \N $ and all $t\in [0,1]$,
\begin{equation}\label{zt2}
\EE\bigl[|R_{n,t}|^r\bigr] \le c/n^r,
\end{equation}
which yields that there exists $c\in (0,\infty)$ such that for all $n\in \N$,
\begin{equation}\label{zt3}
\EE\Bigl[\Bigl|\int_0^1 |R_{n,t}|^q\,dt\Bigr|^{p}\Bigr]^{1/p} \le c/n^{q}.
\end{equation}
Using~\eqref{z232} and~\eqref{zt3} as well as~\eqref{prop01} we conclude that there exist 
$c_1, c_2\in (0,\infty)$
such that for all $n\in \N$,
\begin{equation*}
\begin{aligned}
& \EE\Bigl[\Bigl|\int_0^1  |\eul_{n,t}-\eul_{n,\utn}|^q\cdot \ind_{A_{n,t}}\,dt\Bigr|^p\Bigr]^{1/p} \\
& \qquad\qquad \le c_1\cdot \EE\Bigl[\Bigl|\int_0^1  |W_{t}-W_{\utn}|^q\cdot \ind_{A_{n,t}}\,dt\Bigr|^p\Bigr]^{1/p} + c_1\cdot \EE\Bigl[\Bigl|\int_0^1 |R_{n,t}|^q\,dt\Bigr|^p\Bigr]^{1/p} \\
& \qquad\qquad \le c_2\cdot (1/n^{(q+1)/2} + 1/n^q) \le 2c_2/n^{(q+1)/2},
\end{aligned}
\end{equation*}
which finishes the proof of the proposition.
\end{proof}

\subsection{Proof of the estimates~\eqref{ll3} and~\eqref{ll33}.}\label{4.3} 

For $n\in\N$ and $t\in [0,1]$ we put 
\[
A_t = \int_0^t \mu(X_s)\, ds,\quad \widehat A_{n,t}  = \int_0^t  \mu(\eul_{n,\usn})\, ds 
\]
and
\[
B_t = \int_0^t \sigma(X_s)\, dW_s,\quad \widehat B_{n,t} = \int_0^t \bigl(\sigma(\eul_{n,\usn}) + \sigma\delta_\sigma(\eul_{n,\usn})\cdot (W_s-W_{\usn})\bigr)\, dW_s
\]
as well as
\[
U_{n,t} = \int_0^t \sigma\delta_\mu(\eul_{n,\usn})\cdot (W_s-W_{\usn})\, ds
\]
and we use the decomposition
\begin{equation}\label{end1}
X_t -\eul_{n,t} = (A_t-\widehat A_{n,t}-U_{n,t}) +  (B_t-\widehat B_{n,t}) + U_{n,t}.
\end{equation}

Furthermore, we put
\[
S_\mu = \Bigl(\bigcup_{\ell=1}^{k_\mu+1} (\xi_{\ell-1},\xi_\ell)^2\Bigr)^c,\quad S_\sigma = \Bigl(\bigcup_{\ell=1}^{k_\sigma+1} (\eta_{\ell-1},\eta_\ell)^2\Bigr)^c
\]
and we note that $S_\mu=\cup_{\ell=1}^{k_\mu}\{(x,y)\in \R^2\colon (x-\xi_\ell)\cdot (y-\xi_\ell)\le 0\}$ and $S_\sigma=\cup_{\ell=1}^{k_\sigma}\{(x,y)\in \R^2\colon (x-\eta_\ell)\cdot (y-\eta_\ell)\le 0\}$. Observing the assumption (B3) we thus obtain by Proposition~\ref{prop1} that there exists $c\in (0,\infty)$ such that for all $n\in\N$ and $q\in \{1,2\}$,  
\begin{equation}\label{end2}
\EE\Bigl[\Bigl|\int_0^1 |\eul_{n,t}-\eul_{n,\utn}|^{q}\cdot \ind_{S_\mu \cup S_\sigma} (\eul_{n,t},\eul_{n,\utn})\, dt\Bigr|^p\Bigr] \le c/n^{p(q+1)/2}.
\end{equation}

For all $n\in\N$ and  $t\in [0,1]$ we have
\begin{equation}\label{end3}
\begin{aligned}
& |\mu(X_t) - \mu(\eul_{n,\utn}) - \sigma \delta_\mu(\eul_{n,\utn})\cdot (W_t-W_{\utn})| \\  & \qquad \le |\mu(X_t) - \mu(\eul_{n,t})| + |\mu(\eul_{n,t})-\mu(\eul_{n,\utn})-\delta_\mu(\eul_{n,\utn})\cdot(\eul_{n,t}-\eul_{n,\utn})|\\  & \qquad \qquad + |\delta_\mu(\eul_{n,\utn})\cdot ( \eul_{n,t}-\eul_{n,\utn}- \sigma(\eul_{n,\utn})\cdot (W_t-W_{\utn}))|\\
& \qquad  = |\mu(X_t) - \mu(\eul_{n,t})|\\
& \qquad \qquad + |\mu(\eul_{n,t})-\mu(\eul_{n,\utn})-\delta_\mu(\eul_{n,\utn})\cdot(\eul_{n,t}-\eul_{n,\utn})|\cdot \ind_{S_\mu^c}(\eul_{n,t},\eul_{n,\utn})\\
& \qquad \qquad + |\mu(\eul_{n,t})-\mu(\eul_{n,\utn})-\delta_\mu(\eul_{n,\utn})\cdot(\eul_{n,t}-\eul_{n,\utn})|\cdot \ind_{S_\mu}(\eul_{n,t},\eul_{n,\utn})\\
 & \qquad \qquad + \bigl|\delta_\mu(\eul_{n,\utn})\cdot ( \mu(\eul_{n,\utn})(t-\utn) + \tfrac{1}{2}\sigma\delta_\sigma(\eul_{n,\utn})\cdot ((W_t-W_{\utn})^2 -(t-\utn)))\bigr|.
\end{aligned}
\end{equation}
Using the assumption (B1) as well as~\eqref{LG},~\eqref{bound} and~\eqref{taylor} we thus obtain that there exists $c\in (0,\infty)$ such that for all $n\in\N$ and all $t\in[0,1]$,
\begin{equation}\label{end4}
\begin{aligned}
& |\mu(X_t) - \mu(\eul_{n,\utn}) - \sigma \delta_\mu(\eul_{n,\utn})\cdot (W_t-W_{\utn})| \\ 
& \qquad \le c\cdot |X_t - \eul_{n,t}| +  c\cdot |\eul_{n,t} - \eul_{n,\utn}|^2 + c\cdot |\eul_{n,t} - \eul_{n,\utn}|\cdot \ind_{S_\mu}(\eul_{n,t},\eul_{n,\utn})\\
& \qquad \qquad + c\cdot (1+|\eul_{n,\utn}|)\cdot (1/n + |W_t-W_{\utn}|^2).
\end{aligned}
\end{equation}
Using~\eqref{end4} as well as Lemma~\ref{eulprop} and~\eqref{end2} with $q=1$ we conclude that there exist $c_1,c_2\in (0,\infty)$ such that for all $n\in\N$ and all $t\in [0,1]$,
\begin{equation}\label{end5}
\begin{aligned}
& \EE\Bigl[\,\sup_{0\le s\le t}|A_s-\widehat A_{n,s}-U_{n,s}|^p\Bigr] \\
& \qquad \qquad\le \EE\Bigl[\int_0^t |\mu(X_s) - \mu(\eul_{n,\usn}) - \sigma \delta_\mu(\eul_{n,\usn})\cdot (W_s-W_{\usn})|^p\, ds\Bigr]\\
& \qquad\qquad \le c_1\cdot \int_0^t \EE\bigl[|X_s - \eul_{n,s}|^p\bigr]\, ds + c_1\cdot \int_0^t \EE\bigl[|\eul_{n,s} - \eul_{n,\usn}|^{2p}\bigr]\, ds\\
& \qquad \qquad \qquad+ c_1\cdot \EE\Bigl[\Bigl|\int_0^t |\eul_{n,s}-\eul_{n,\usn}|\cdot \ind_{S_\mu} (\eul_{n,s},\eul_{n,\usn})\, ds\Bigr|^p\Bigr] \\
& \qquad \qquad \qquad+ c_1\cdot \int_0^t \EE\bigl[(1+|\eul_{n,\usn}|^p)\cdot (1/n^p + |W_s-W_{\usn}|^{2p}\bigr]\, ds\\
& \qquad\qquad \le c_1\cdot \int_0^t \EE[|X_s - \eul_{n,s}|^p]\, ds + c_2/ n^p.
\end{aligned}
\end{equation}

Proceeding similarly to~\eqref{end3} and~\eqref{end4} one obtains that there exists $c\in (0,\infty)$ such that for all $n\in\N$ and all $t\in [0,1]$,
\begin{equation}\label{end6}
\begin{aligned}
& |\sigma(X_t) - \sigma(\eul_{n,\utn}) - \sigma \delta_\sigma(\eul_{n,\utn})\cdot (W_t-W_{\utn})| \\ 
& \qquad \le c\cdot |X_t - \eul_{n,t}| +  c\cdot |\eul_{n,t} - \eul_{n,\utn}|^2 + c\cdot |\eul_{n,t} - \eul_{n,\utn}|\cdot \ind_{S_\sigma}(\eul_{n,t},\eul_{n,\utn})\\
& \qquad \qquad + c\cdot (1+|\eul_{n,\utn}|)\cdot (1/n + |W_t-W_{\utn}|^2).
\end{aligned}
\end{equation}
Employing the Burkholder-Davis-Gundy inequality,  Lemma~\ref{eulprop},~\eqref{end2} with $q=2$ and~\eqref{end6} we then conclude analogously to the derivation of~\eqref{end5} that there exist $c_1,c_2,c_3\in (0,\infty)$ such that for all $n\in\N$ and all $t\in [0,1]$,
\begin{equation}\label{end7}
\begin{aligned}
& \EE\Bigl[\,\sup_{0\le s\le t}|B_t-\widehat B_{n,t}|^p\Bigr] \\
& \qquad \le c_1\cdot \EE\Bigl[\Bigl|\int_0^t |\sigma(X_s) - \sigma(\eul_{n,\usn}) - \sigma \delta_\sigma(\eul_{n,\usn})\cdot (W_s-W_{\usn})|^2\, ds\Bigr|^{p/2}\Bigr] \\
& \qquad\qquad \le c_2\cdot \int_0^t \EE\bigl[|X_s - \eul_{n,s}|^p\bigr]\, ds + c_2\cdot \int_0^t \EE\bigl[|\eul_{n,s} - \eul_{n,\usn}|^{2p}\bigr]\, ds\\
& \qquad \qquad \qquad+ c_2\cdot \EE\Bigl[\Bigl|\int_0^t |\eul_{n,s}-\eul_{n,\usn}|^2\cdot \ind_{S_\sigma} (\eul_{n,s},\eul_{n,\usn})\, ds\Bigr|^{p/2}\Bigr] \\
& \qquad \qquad \qquad+ c_2\cdot \int_0^t \EE\bigl[(1+|\eul_{n,\usn}|^p)\cdot (1/n^p + |W_s-W_{\usn}|^{2p}\bigr]\, ds\\
& \qquad\qquad \le c_2\cdot \int_0^t \EE[|X_s - \eul_{n,s}|^p]\, ds + c_3/ n^{3p/4}.
\end{aligned}
\end{equation}

Combining~\eqref{end1} with~\eqref{end5} and~\eqref{end7} we see that there exists $c\in (0,\infty)$ such that for 
all $n\in\N$ and
all $t\in[0,1]$,
\begin{equation}\label{end8}
\begin{aligned}
\EE\bigl[\,\sup_{0\le s\le t}|X_t-\eul_{n,t}|^p\bigr] \le c\cdot \int_0^t \EE\bigl[\sup_{0\le u\le s}|X_u - \eul_{n,u}|^p\bigr]\, ds + c/n^{3p/4} + \EE\bigl[\,\sup_{0\le s\le t}|U_{n,s}|^p\bigr].
\end{aligned}
\end{equation}
Note that $\EE\bigl[\|X-\eul_{n}\|_\infty^p\bigr] < \infty$ due to~\eqref{mom} and Lemma~\ref{eulprop}.
Below we show that there exists $c\in (0,\infty)$ such that for all $n\in\N$,
\begin{equation}\label{end9}
\begin{aligned}
\EE\bigl[\,\sup_{0\le s\le 1}|U_{n,s}|^p\bigr] \le c/n^p.
\end{aligned}
\end{equation}
Inserting~\eqref{end9} into~\eqref{end8} and applying the Gronwall inequality then yields the error estimate in Theorem~\ref{Thm1}.

We turn to the proof of~\eqref{end9}. Clearly, for all $n\in\N$, all $\ell\in\{0,\dots,n-1\}$ and all $s\in [\ell/n,(\ell+1)/n]$ we have
\begin{equation}\label{end10}
U_{n,s} = U_{n,\ell/n} + \sigma \delta_\mu(\eul_{n,\ell/n})\cdot \int_{\ell/n}^{s}(W_u-W_{\ell/n})\, du,
\end{equation}
which jointly with Lemma~\ref{eulprop} shows that the sequence $(U_{n,\ell/n})_{\ell=0,\dots,n}$ is a martingale. 
Furthermore, using~\eqref{LG} and~\eqref{bound}  we obtain from~\eqref{end10} that there exists $c\in (0,\infty)$ such that for all $n\in\N$,
\begin{equation}\label{end11}
\begin{aligned}
\sup_{0\le s\le 1} |U_{n,s}| & \le \max_{\ell=0,\dots,n-1}|U_{n,\ell/n}| +\max_{\ell=0,\dots,n-1}|\sigma \delta_\mu(\eul_{n,\ell/n})|\cdot \int_{\ell/n}^{(\ell+1)/n}|W_u-W_{\ell/n}|\, du\\
& \le \max_{\ell=0,\dots,n}|U_{n,\ell/n}| + c\cdot (1+\|\eul_n\|_\infty)\cdot \max_{\ell=0,\dots,n-1}\int_{\ell/n}^{(\ell+1)/n}|W_u-W_{\ell/n}|\, du.
\end{aligned}
\end{equation}
Clearly, for all $q\in [1,\infty)$ there exists $c\in (0,\infty)$ such that for all $n\in\N$,
\begin{equation}\label{end12}
\EE\Bigl[\Bigl|\int_{\ell/n}^{(\ell+1)/n}|W_u-W_{\ell/n}|\, du\Bigr|^q\Bigr] \le  \EE\Bigl[\frac{1}{n^{q-1}}\cdot \int_{\ell/n}^{(\ell+1)/n}|W_u-W_{\ell/n}|^{q}\, du\Bigr]\le  \frac{c}{n^{3q/2}}. 
\end{equation}
Employing the Burkholder-Davis-Gundy inequality as well as~\eqref{LG},~\eqref{bound}, Lemma~\ref{eulprop} and~\eqref{end12} we obtain that there exist $c_1,c_2,c_3\in (0,\infty)$ such that for all $n\in\N$,
\begin{equation}\label{end13}
\begin{aligned}
\EE\bigl[\, \max_{\ell=0,\dots,n}|U_{n,\ell/n}|^p\Bigr] & \le 
\EE\Bigl[\Bigl( \sum_{\ell=0}^{n-1} \Bigl(\sigma\delta_\mu(\eul_{n,\ell/n})\cdot \int_{\ell/n}^{(\ell+1)/n}(W_u-W_{\ell/n})\, du\Bigr)^2\Bigr)^{p/2}\Bigr]\\
& \le c_1\cdot \EE\bigl[(1+\|\eul_n\|_\infty^{2p})\bigr]^{1/2}\cdot \EE\Bigl[\Bigl( \sum_{\ell=0}^{n-1} \Bigl( \int_{\ell/n}^{(\ell+1)/n}(W_u-W_{\ell/n})\, du\Bigr)^2\Bigr)^{p}\Bigr]^{1/2}\\
& \le c_2\cdot \Bigl( \sum_{\ell=0}^{n-1}\EE\Bigl[\Bigl( \int_{\ell/n}^{(\ell+1)/n}|W_u-W_{\ell/n}|\, du\Bigr)^{2p}\Bigr]^{1/p}\Bigr)^{p/2} \le \frac{c_3}{n^p}.
\end{aligned}
\end{equation}
Furthermore, by~\eqref{end12} and Lemma~\ref{eulprop} we see that there exists 
$c_1, c_2\in (0,\infty)$
such that for all $n\in\N$,
\begin{equation}\label{end14}
\begin{aligned}
& \EE\Bigl[\Bigl((1+\|\eul_n\|_\infty)
\cdot \max_{\ell = 0,\ldots,k-1}\int_{\ell/n}^{(\ell+1)/n}|W_u-W_{\ell/n}|\, du\Bigr)^p\Bigr] \\
& \qquad\qquad \le c_1\cdot \EE\bigl[(1+\|\eul_n\|_\infty^{2p})\bigr]^{1/2}
\cdot\EE\Bigl[ \sum_{\ell=0}^{n-1} \Bigl( \int_{\ell/n}^{(\ell+1)/n}(W_u-W_{\ell/n})\, du\Bigr)^{2p}\Bigr]^{1/2}
 \le \frac{c_2}{n^p}.
\end{aligned}
\end{equation}
Combining~\eqref{end11} with~\eqref{end13} and~\eqref{end14} 
yields~\eqref{end9}
and completes the proof of the estimate~\eqref{ll3} in Theorem~\ref{Thm1}.

It remains to prove~\eqref{ll33}. In the case $k_\sigma=0$ we have $S_\sigma=\emptyset$. Then the estimates~\eqref{end5} and~\eqref{end9} still hold true but instead of the estimate~\eqref{end7} we obtain 
\begin{equation}\label{endx1}
 \EE\bigl[\,\sup_{0\le s\le t}|B_t-\widehat B_{n,t}|^p\bigr] \le c\cdot \int_0^t \EE\bigl[|X_s - \eul_{n,s}|^p\bigr]\, ds + c_3/ n^{p},
\end{equation} 
where $c\in (0,\infty)$ neither  depends on $n$ nor on $t$. Combining~\eqref{end5},~\eqref{end9} and~\eqref{endx1} we obtain that there exists $c\in (0,\infty)$ such that for all $n\in\N$ and all $t\in [0,1]$,
\begin{equation}\label{last}
 \EE\bigl[\,\sup_{0\le s\le t}|X_t-\eul_{n,t}|^p\bigr] \le c\cdot \int_0^t \EE\bigl[\sup_{0\le u\le s}|X_u - \eul_{n,u}|^p\bigr]\, ds + c/n^{p}.
\end{equation}
Applying the Gronwall inequality we now obtain the estimate~\eqref{ll33} from~\eqref{last}

\section{Proof of Lemmas~\ref{lemx1},~\ref{transform1}}\label{Lem}

We make use of the following result, which is straightforward to check.

\begin{lemma}\label{mult}
Let $-\infty \le a < b \le \infty$ and let $f, g\colon\R\to\R$ be Lipschitz continuous on  $(a,b)$. Assume further that there exists $c\in (0, \infty)$ such that $g$ is constant on the set $(-\infty,c)\cup (c,\infty)$. Then  $f\cdot g$ is Lipschitz continuous on $(a,b)$.
\end{lemma}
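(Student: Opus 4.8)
The plan is to reduce the statement to the elementary fact that the product of two \emph{bounded} Lipschitz functions on an interval is again Lipschitz, the only genuine issue being that $(a,b)$ may be unbounded, so that $f$ (and, a priori, $g$) need not be bounded on $(a,b)$. The extra hypothesis on $g$ is precisely what removes this obstruction: $g$ is constant on each of the two half-lines lying outside the bounded set $[-c,c]$, while on the bounded interval $(a,b)\cap[-c,c]$ a Lipschitz function is automatically bounded; hence $g$ is bounded on all of $(a,b)$.

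Concretely, I would split $(a,b)$ into the at most three subintervals obtained by cutting at whichever of the points $-c,\,c$ lie in $(a,b)$: $I_-=(a,b)\cap(-\infty,-c]$, $I_0=(a,b)\cap[-c,c]$, and $I_+=(a,b)\cap[c,\infty)$. On $I_-$ the function $g$ equals a constant $v_-$, so $fg=v_-f$ there, which is Lipschitz with constant $|v_-|\cdot\mathrm{Lip}(f)$; similarly $fg$ is Lipschitz on $I_+$. On the bounded interval $I_0$ both $f$ and $g$ are Lipschitz, hence bounded (the supremum of $|f|$ over $I_0$ being controlled by $\mathrm{Lip}(f)$, the length of $I_0$, and one function value), and the standard estimate
\[
|f(x)g(x)-f(y)g(y)|\le \|g\|_{L^\infty(I_0)}\,\mathrm{Lip}(f)\,|x-y|+\|f\|_{L^\infty(I_0)}\,\mathrm{Lip}(g)\,|x-y|
\]
shows $fg$ is Lipschitz on $I_0$. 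Finally, $fg$ is continuous on $(a,b)$ (as $f$ and $g$ are), and a function that is continuous on $(a,b)$ and Lipschitz on each of finitely many closed subintervals that cover $(a,b)$, with consecutive subintervals sharing an endpoint, is Lipschitz on $(a,b)$ with Lipschitz constant the maximum of those of the pieces; this is a routine triangle-inequality argument. Hence $fg$ is Lipschitz on $(a,b)$.

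I do not expect any real obstacle here; the lemma is elementary, and the only point that needs a moment's thought is the one already flagged, namely that one cannot invoke ``bounded $\times$ bounded'' directly because $f$ may be unbounded on an infinite interval, which is why the argument is localized to the compact middle piece $I_0$ while on the two outer rays $fg$ is merely a rescaling of $f$. The bulk of a full write-up would simply be the gluing step together with the verification that $f$ is bounded on the bounded interval $I_0$.
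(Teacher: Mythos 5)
Your argument is correct. Note that the paper itself gives no proof of this lemma -- it is introduced with the remark that it is ``straightforward to check'' -- so there is nothing to compare against; your decomposition into the two outer rays (where $g$ is constant, so $fg$ is a constant multiple of $f$) and the compact middle piece $(a,b)\cap[-c,c]$ (where Lipschitz continuity forces boundedness of $f$ and $g$, so the standard product estimate applies), followed by the triangle-inequality gluing across the shared endpoints $\pm c$, is precisely the routine verification the authors had in mind and is complete. One small remark: the set in the statement, $(-\infty,c)\cup(c,\infty)$, is evidently a typo for $\{x\in\R\colon |x|>c\}$ (this is how the lemma is invoked, with $G'_{\xi,\alpha,\nu}=1$ and $G''_{\xi,\alpha,\nu}=0$ for $|x|>c$); you read it in this intended, more general way, and your proof also covers the literal reading as a special case, with the continuity of $g$ on $(a,b)$ taking care of the values at the cut points, as you observe.
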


\subsection{Proof of Lemma~\ref{lemx1}}
We first show that $G_{z,\alpha,\nu}$ satisfies (i).
It is  straightforward to check that $G_{z,\alpha,\nu}$ is differentiable on $\R$ with 
\[
G_{z,\alpha,\nu}'(x)=1+\sum_{i=1}^k 2\alpha_i \nu\cdot \frac{|x-z_i|}{\nu}\cdot \Bigl(1-\Bigl(\frac{x-z_i}{\nu}\Bigr)^2\Bigr)^3\cdot \Bigl(1-5\,\Bigl(\frac{x-z_i}{\nu}\Bigr)^2\Bigr)\cdot \ind_{[z_i-\nu, z_i+\nu]}(x)
\]
for all $x\in\R$. Note that for every $i\in\{1,\dots,k\}$ the mapping $x\mapsto \frac{|x-z_i|}{\nu}\cdot (1-(\frac{x-z_i}{\nu})^2)^3\cdot (1-5\,(\frac{x-z_i}{\nu})^2)\cdot \ind_{[z_i-\nu, z_i+\nu]}(x)$ is Lipschitz continuous on $\R$. Thus, as a finite linear combination of Lipschitz continuous functions,  $G_{z,\alpha,\nu}'$ is Lipschitz continuous on $\R$ as well. Clearly, 
for all $x\in \{z_1,\dots,z_k\}\cup\R\setminus \bigcup_{i=1}^k (z_i-\nu, z_i+\nu)$
we have
\begin{equation}\label{first}
G_{z,\alpha,\nu}'(x)=1
\end{equation}
and for all $i\in\{1, \ldots, k\}$ and all $x\in [\xi_i-\nu, \xi_i+\nu]$ we have
\[
G_{z,\alpha,\nu}'(x)\geq 1-8|\alpha_i| \nu>0,
\]
which finishes the proof of part (i) of the lemma.

Next we show that $G_{z,\alpha,\nu}$ satisfies (ii) and (iii). Note that the intervals $[z_i-\nu, z_i+\nu]$, $i=1,\dots,k$, are pairwise disjoint. Observing~\eqref{first} it is easy to check that $G_{z,\alpha,\nu}'$ is two times differentiable on $ \cup_{i=1}^{k+1}(z_{i-1},z_i)$ with
\begin{equation}\label{der}
\begin{aligned}
G_{z,\alpha,\nu}''(x) & = \begin{cases} -2\alpha_i \cdot \psi_i(x), & \text{if }x\in (z_i-\nu,z_i),\\2\alpha_i \cdot \psi_i(x), & \text{if }x\in (z_i, z_i+\nu),\\
0, & \text{if }x\in \R\setminus \bigcup_{j=1}^k (z_j-\nu, z_j+\nu),\end{cases} \\
G_{z,\alpha,\nu}'''(x) & = \begin{cases} -2\alpha_i/\nu \cdot \eta_i(x), & \text{if }x\in (z_i-\nu,z_i),\\
2\alpha_i/\nu \cdot \eta_i(x), & \text{if }x\in (z_i, z_i+\nu),\\
0, & \text{if }x\in \R\setminus \bigcup_{j=1}^k (z_j-\nu, z_j+\nu),\end{cases}
\end{aligned}
\end{equation}
where
\begin{equation}\label{der2}
\begin{aligned}
\psi_i(x) & = 
\Bigl(1-\Bigl(\frac{x-z_i}{\nu}\Bigr)^2\Bigr)^2\Bigl(1-22\Bigl(\frac{x-z_i}{\nu}\Bigr)^2 + 45\Bigl(\frac{x-z_i}{\nu}\Bigr)^4\Bigr),\\
\eta_i(x) & = \Bigl(1-\Bigl(\frac{x-z_i}{\nu}\Bigr)^2\Bigr)\Bigl(-48\frac{x-z_i}{\nu}+ 312\Bigl(\frac{x-z_i}{\nu}\Bigr)^3-360\Bigl(\frac{x-z_i}{\nu}\Bigr)^5 \Bigr).
\end{aligned}
\end{equation}
Obviously, on each interval $(z_{i-1},z_i)$,   $G''_{z,\alpha,\nu}$ and $G'''_{z,\alpha,\nu}$ are Lipschitz continuous, and we have $G''_{z,\alpha,\nu}(z_i-) = -2\alpha_i = -G''_{z,\alpha,\nu}(z_i+)$.
This finishes the proof of Lemma~\ref{lemx1}.\qed

\subsection{Proof of Lemma~\ref{transform1}}

Due to Lemma~\ref{lemx1} the functions $\widetilde \mu$ and $\widetilde \sigma$ are well-defined. Recall from Lemma~\ref{lemx1}(i) that there exists $c\in (0,\infty)$ such that $G'_{\xi,\alpha,\nu}=1$ on $(-\infty,c)\cup (c,\infty)$. Hence $G''_{\xi,\alpha,\nu}=0$ on $(-\infty,c)\cup (c,\infty)$. By means of Lemma~\ref{mult} we can thus conclude that $ G'_{\xi,\alpha,\nu}\cdot\mu$ and $G''_{\xi,\alpha,\nu}\cdot \sigma^2$ are  Lipschitz continuous on each of the intervals  $(\xi_{0},\xi_1),\dots,(\xi_{k},\xi_{k+1})$ and that $ G'_{\xi,\alpha,\nu}\cdot\sigma$ is Lipschitz continuous on $\R$. Observing Lemma~\ref{lemx1}(i),(iii) we see that for each $i\in\{1,\dots,k\}$,
\begin{align*}
(G_{\xi,\alpha,\nu}'\cdot \mu +\tfrac{1}{2}G_{\xi,\alpha,\nu}''\cdot\sigma^2)(\xi_i-) & = \mu(\xi_i-) -\alpha_i\cdot \sigma^2(\xi_i)\\ &
 = (\mu(\xi_i-) +\mu(\xi_i+))/2  = (G_{\xi,\alpha,\nu}'\cdot \mu +\tfrac{1}{2}G_{\xi,\alpha,\nu}''\cdot\sigma^2)(\xi_i)\\
& = \mu(\xi_i+) +\alpha_i\cdot \sigma^2(\xi_i) =  (G_{\xi,\alpha,\nu}'\cdot \mu+\tfrac{1}{2}G_{\xi,\alpha,\nu}''\cdot\sigma^2)(\xi_i+).
\end{align*}
Hence $G_{\xi,\alpha,\nu}'\cdot \mu +\tfrac{1}{2}G_{\xi,\alpha,\nu}''\cdot\sigma^2$ is continuous on $\R$ and Lipschitz continuous on each of the intervals  $(\xi_{0},\xi_1),\dots,(\xi_{k},\xi_{k+1})$, which yields Lipschitz continuity of the latter function on the whole real line. Finally, recall  that by Lemma~\ref{lemx1},   $G^{-1}_{\xi,\alpha,\nu}$ is Lipschitz continous. This shows that $\widetilde \mu$ and $\widetilde \sigma$ satisfy 
the assumption
(B1).

Using 
the assumption
(A3), Lemma~\ref{lemx1}(i),(ii) 
and the fact that $G^{-1}((\xi_{i-1},\xi_i))=(\xi_{i-1},\xi_i)$ for all $i\in\{1, \ldots, k+1\}$
we immediately obtain  that for each $i\in\{1,\dots,k+1\}$ the functions $\widetilde \mu$ and $\widetilde \sigma$ are differentiable on $(\xi_{i-1},\xi_i)$ with derivatives
\begin{align*}
\widetilde \mu' & = (\mu' + G_{\xi,\alpha,\nu}''/G_{\xi,\alpha,\nu}'\cdot (\mu + \sigma\cdot \sigma') + \tfrac{1}{2}G_{\xi,\alpha,\nu}'''/G_{\xi,\alpha,\nu}'\cdot \sigma^2)\circ G^{-1}_{\xi,\alpha,\nu},\\
\widetilde \sigma' & = ( \sigma' + G_{\xi,\alpha,\nu}''/G_{\xi,\alpha,\nu}'\cdot \sigma) \circ G^{-1}_{\xi,\alpha,\nu}.
\end{align*}
Using
the assumption
(A3) and Lemma~\ref{lemx1}(i),(ii) again we can now derive by iteratively applying Lemma~\ref{mult} (with any extension of $\mu'$ and $\sigma'$ to the whole real line)  that for each $i\in\{1,\dots,k+1\}$ the functions $\widetilde \mu'$ and $\widetilde \sigma'$ are  Lipschitz continous on  $(\xi_{i-1},\xi_i)$. Hence  $\widetilde \mu$ and $\widetilde \sigma$ satisfy 
the assumption
(B2) with $k_\mu=k_\sigma = k $ and $\eta_i = \xi_i$ for $i\in\{1,\dots,k\}$. Finally, note that $G_{\xi,\alpha,\nu}(\xi_i) = \xi_i$ for each $i\in\{1,\dots,k\}$, which yields that $\widetilde \sigma(\xi_i) = \sigma(\xi_i) \neq 0$ for each $i\in\{1,\dots,k\}$. Hence $\widetilde \sigma$ satisfies 
the assumption
(B3), which finishes the proof of Lemma~\ref{transform1}.\qed

\bibliographystyle{acm}
\bibliography{bibfile}

\end{document}